\documentclass[11pt]{article}
\usepackage{amsmath, amssymb, eucal, latexsym, multicol, amsthm, bm}
\usepackage{psfrag}
\usepackage{comment}
\usepackage{comment,graphicx,pstcol}
\usepackage[figtopcap,FIGBOTCAP]{subfigure}

\usepackage{epstopdf, pstricks}

\usepackage{enumerate}

\input definitions.sty

\title{{Contraction and optimality properties\\ of an adaptive Legendre-Galerkin method:\\ the multi-dimensional case}}
\author{Claudio Canuto \thanks{Dipartimento di Scienze Matematiche,
Politecnico di Torino,
Corso Duca degli Abruzzi 24,
I-10129 Torino, Italy (claudio.canuto@polito.it )} \and Valeria Simoncini\thanks{Dipartimento di Matematica,
Universit\`a di Bologna,
Piazza di Porta San Donato  5, I-40127 Bologna, Italy
(valeria.simoncini@unibo.it).}\and Marco Verani\thanks{MOX-Dipartimento di Matematica, Politecnico di Milano, P.zza Leonardo Da Vinci 32, I-20133 Milano, Italy (marco.verani@polimi.it).}}

\date{\today}

\begin{document}

\maketitle

\pagestyle{myheadings}
\thispagestyle{plain}
\markboth{}
        {}

\begin{abstract}
We analyze the theoretical properties of an adaptive Legendre-Galerkin method
in the multidimensional case. After the recent investigations  for
Fourier-Galerkin methods in a periodic box and for Legendre-Galerkin methods in the one dimensional setting, the present study represents a further step 
towards a mathematically rigorous understanding of
adaptive spectral/$hp$ discretizations of elliptic boundary-value
problems. The main contribution of the paper is a careful construction of a multidimensional Riesz basis in $H^1$, based on a quasi-orthonormalization procedure.  This allows us to design an adaptive algorithm,  to prove its convergence by a contraction argument, and to discuss its optimality properties 
(in the sense of non-linear approximation theory) in certain sparsity classes of Gevrey type.
\end{abstract}




\section{Introduction}\label{S:intro}

The use of adaptivity in numerical modelling and simulation has now become a standard in Engineering and industrial applications.  Although the practice goes back to the $70$'s, the mathematical understanding of the convergence and optimality properties of adaptive algorithms for approximating the solution of multidimensional PDEs is rather recent. 
For linear elliptic problems the first convergence
results of adaptive $h$-type finite element methods ($h$-AFEM) have been proved by D\"orfler \cite{dorfler:96} and Morin, Nochetto, and Siebert \cite{MNS:00}. On  the other hand, the first convergence rates were derived for wavelets in any dimensions by Cohen, Dahmen, and DeVore \cite{CDDV:1998}, and for $h$-AFEM by Binev, Dahmen, and DeVore \cite{BDD:04} for the two-dimensional case and Stevenson \cite{Stevenson:2007} for any dimensions.  The most general results
for $h$-AFEM are those contained in Casc\'on, Kreuzer, Nochetto,
and Siebert \cite{Nochetto-et-al:2008} for any dimensions and $L^2$ data,
and in Cohen, DeVore, and Nochetto \cite{CDN:11} for two-dimensional case and $H^{-1}$ data. 
The key result of this theory is that  wavelets and $h$-AFEM are capable of guaranteeing convergence rates coherent with those dictated by 
the (best $N$-term) approximation classes where the solution and data 
belong. However, 
in the above wavelet and FEM contexts,  convergence rates are limited by the approximation power of
the method, which is finite and related to the
polynomial degree of the basis functions or the number of their vanishing moments, as well as the 
{\em sparsity}  of the solution and the data. The latter is always measured in an {\em algebraic} approximation class, i.e.,
the best $N$-term approximation error decays at least as a power of $N^{-1}$.
We refer to the surveys \cite{NSV:09} by Nochetto, Siebert and Veeser
for AFEM and \cite{Stevenson:09} by Stevenson for adaptive wavelets.

For adaptive methods with infinite approximation power (such as spectral or spectral-element methods, and $hp$-type finite element methods), the state of the art is less developed. Although the numerical implementation started long time ago and has led to the design of very sophisticated and efficient adaptive $hp$ algorithms (see, e.g., \cite{Mitchell:2011}; see also \cite{CV:book} and the references therein), very little is known on their theoretical properties. In particular, 
after the pioneering work \cite{Babuska-Gui} focussed on the approximation of specific types of functions,
some rigorous convergence results for the $hp$ adaptive solution of elliptic problems have been obtained only recently in \cite{Schmidt-Siebert:00,Doerfler-Heuveline:07,Burg-Doerfler:11}. However, these studies do not address any optimality analysis.

A first step in this direction has been accomplished in \cite{CNV:mathcomp} by considering adaptive spectral Fourier-Galerkin methods 
in a periodic box in $\mathbb{R}^d$, $d\ge1$, which represent the simplest instance of infinite-order methods yet providing a very important conceptual benchmark. The contraction and the optimal cardinality properties of various algorithms are presented therein;  in the analysis,  suitable nonlinear approximation classes
(also termed sparsity classes) are involved, namely the already mentioned algebraic classes and the newly introduced  {\em exponential} classes corresponding to a (sub-)exponential
decay of the best $N$-term approximation error. The latter classes, of Gevrey type, are natural to describe situations that  motivate the use of high-order methods.

A second step towards the study of optimality for high-order methods has been performed in \cite{Canuto-Nochetto-Verani-CMA:2014} where the method and the results contained in \cite{CNV:mathcomp} have been extended to a non-periodic setting in one dimension.   Such a setting is the closest to the periodic one, since an $H^1$-orthonormal basis is readily available (the so-called Babu\v ska-Shen basis formed by the primitives of the Legendre polynomials); together with the associated dual basis, it allows one to represent the norm of a function or a functional (e.g., the residual associated to the approximate solution) as an $\ell_2$-type norm of the vector of its expansion coefficients. Furthermore, the use of an orthonormal basis allows the efficient implementation of the greedy and coarsening procedures required by the adaptive algorithm. Indeed, the study of the optimality properties performed in  \cite{CNV:mathcomp,Canuto-Nochetto-Verani-CMA:2014} relies on a careful analysis of the relation between the sparsity class of a function and the sparsity class of its image through the differential operator. This analysis is based on the observation that the stiffness matrix associated  to a differential operator with smooth coefficients exhibits a quasi-sparse behavior, i.e., an exponential decay of its entries as one goes away from the diagonal. The discrepancy between the sparsity classes of the residual and the exact solution suggests the introduction of a coarsening step that guarantees the optimality of the computed approximation at the end of each adaptive iteration.
 
The present paper deals with adaptive Legendre-Galerkin methods in a tensorial domain in $\mathbb{R}^d$, $d >1$, for elliptic equations submitted to Dirichlet boundary conditions. This poses additional difficulties with respect to the one dimensional case, considered in \cite{Canuto-Nochetto-Verani-CMA:2014}. In particular, the crucial issue is represented by the $H^1$-stability properties  of the multidimensional Legendre polynomials. Unfortunately, the natural basis, formed by tensor products of one-dimensional basis functions, is not $H^1$-orthogonal
(because, unlike the Fourier basis, the one-dimensional Babu\v ska-Shen basis is not simultaneously orthogonal in $L^2$ and $H^1$) and not even a Riesz basis. This suggests searching for a Riesz basis in $H^1$, still remaining closely related to the tensorial BS basis in order to take advantage of the properties
of Legendre polynomials. The main idea developed in this paper is to start with a Gram-Schmidt (GS) orthogonalization of the latter basis, but then apply a controlled thresholding procedure that discards the  smallest contributions from the
linear combinations generated by GS: in other words, we devise a quasi-orthonormalization technique. A fundamental ingredient for rigorously controlling this procedure is the construction of sharp estimates on the decay of  the GS coefficients, which in turn involve the decay of the entries of the inverse stiffness matrix for the Laplacian \cite{CSV:13}. With such a new basis, results comparable to those of  
\cite{CNV:mathcomp,Canuto-Nochetto-Verani-CMA:2014} can be established. In particular, they rely on the exponential decay of the entries of the stiffness matrix, when the differential operators has smooth (analytic) coefficients,
and on the repeated application of a coarsening stage in the adaptive algorithm. 

The results of the present paper can be easily extended to cover the case of adaptive ``$p$-type" spectral element methods, i.e., when the domain is decomposed in a fixed number of (images of) tensorial elements, and adaptivity concerns the choice of the expansion functions in each element. Furthermore, some of the ideas and methods here introduced could influence the design of adaptive $hp$-type algorithms, as far as the phase of ``$p$-enrichment" within the elements
is concerned. With this respect, we remark that very recently, the optimality properties of an $hp$-adaptive finite element method have been obtained in \cite{CNSV:14}, employing the pioneering results on $hp$-tree approximation of \cite{B:oberw,B:tree}.

The outline of the paper is as follows. In Section \ref{sec:basis} we detail the construction of our multidimensional Riesz basis and provide the reader with both theoretical results and quantitative insight, the latter concerning in particular the compression properties of the thresholding procedure. In Section \ref{sec:gen} we introduce the algebraic representation of an elliptic differential problem in terms of the above Riesz basis and discuss the exponential decay properties of the entries of the corresponding stiffness matrix. Finally, in Section \ref{sec:adapt-alg} we  present our adaptive Legendre algorithm (FPC-ADLEG) and prove its contraction and optimality properties. 

\bigskip

Throughout the paper,  $A \lsim B$ means $A \le c \, B$ for some constant $c>0$ 
independent of the relevant parameters in the inequality; $A \simeq B$ means 
$B \lsim A \lsim B$.


\section{Modal bases in $H^1_0$ and norm representations}\label{sec:basis}

We start with the one-dimensional case. Set $I=(-1,1)$ and let 
$L_k({x})$, $k \geq 0$, stand for the $k$-th Legendre orthogonal polynomial in $I$, 
which satisfies  ${\rm deg}\, L_k = k$, $L_k(1)=1$ and
\begin{equation}\label{eq:Leg-ort}
\int_I L_k({x}) L_m({x}) \, d{x} = \frac2{2k+1}\, \delta_{km}\;, \qquad m \geq 0 \;.
\end{equation}
The natural modal basis in $H^1_0(I)$ is the {\sl Babu\v ska-Shen basis} (BS basis), whose elements are defined as
\begin{equation}\label{eq:defBS}
\eta_k({x})=\sqrt{\frac{2k-1}2}\int_{{x}}^1 L_{k-1}(s)\,{d}s =
\frac1{\sqrt{4k-2}}\big(L_{k-2}({x})-L_{k}({x})\big)\ , 
 \qquad k \geq 2\;.
\end{equation}
The basis elements satisfy ${\rm deg}\, \eta_k = k$ and
\begin{equation}\label{eq:propBS.1}
(\eta_k,\eta_m)_{H^1_0({I})} = 
\int_I \eta_k'({x}) \eta_m'({x}) \, d{x} = \delta_{km}\;, \qquad k,m \geq 2 \;,
\end{equation}
i.e., they form an orthonormal basis for the ${H^1_0({I})}$-inner product. Equivalently, 
the (semi-infinite) stiffness matrix ${S}_\eta$ of the Babu\v ska-Shen basis with respect to this inner product is
the identity matrix ${I}$. On the other hand, one has
\begin{equation}\label{eq:propBS.2}
(\eta_k,\eta_m)_{L^2({I})} = 
\begin{cases}
\frac2{(2k-3)(2k+1)} & \text{if } m=k \;, \\
- \frac1{(2k+1)\sqrt{(2k-1)(2k+3)}} &  \text{if } m=k+2 \;, \\
0 & \text{elsewhere.}
\end{cases} \; \qquad \text{for } k \geq m \;, 
\end{equation}
which means that the mass matrix ${M}_\eta$ is pentadiagonal with only three non-zero entries per row. (Since even and odd modes are mutually orthogonal,
the mass matrix could be equivalently represented by a couple of tridiagonal matrices, each one collecting the inner
products of all modes with equal parity.)  

Any $v \in H^1_0({I})$ can be expanded in terms of the Babu\v ska-Shen basis, as
$v = \sum_{k =2}^\infty \hat{v}_k \eta_k$ with $\hat{v}_k = (v,\eta_k)_{H^1_0({I})}$
and its $H^1_0({I})$-norm can be expressed, according to the Parseval identity, as
\begin{equation}\label{eq:propBS.3}
\Vert v \Vert_{H^1_0({I})}^2  =  \sum_{k= 2}^\infty |\hat{v}_k|^2 = \hat{v}^T \hat{v}\;,
\end{equation}
where the vector $\hat{v}=(\hat{v}_k)$ collects the coefficients of $v$. The $L^2(I)$-norm of $v$ is
given by
\begin{equation}\label{eq:propBS.3bis}
\Vert v \Vert_{L^2({I})}^2  = \hat{v}^T  {M}_\eta \, \hat{v}\;.
\end{equation}
Correspondingly, any element $f \in H^{-1}({I})$ can be expanded along the {\sl dual Babu\v ska-Shen basis}, whose elements
$\eta_k^*$, $k \geq 2$, are defined by the conditions
$\langle \eta_k^* , v \rangle = \hat{v}_k$ $\forall v \in H^1_0({I})$,
precisely one has
$f = \sum_{k = 2}^\infty \hat{f}_k \eta_k^*$ with $\hat{f}_k = \langle f,\eta_k \rangle$,
and its $H^{-1}({I})$-norm can be expressed, according to the Parseval identity, as
\begin{equation}\label{eq:propBS.4}
\Vert f \Vert_{H^{-1}({I})}^2  =  \sum_{k= 2}^\infty |\hat{f}_k|^2 \;.
\end{equation}

Summarizing, we see that the one-dimensional Legendre case is perfectly similar, from the point of view of 
expansions and norm representations, to the Fourier case \cite{CNV:mathcomp}. The situation changes significantly in higher dimensions.
For the sake of simplicity, we confine ourselves to the case of dimension $d=2$, since higher dimensions pose
no conceptual difficulties but require a larger computational effort in the numerical experiments.

\medskip  
Let us set $\Omega=(-1,1)^2$ and let us consider in $H^1_0(\Omega)$ the {\sl tensorized Babu\v ska-Shen basis}, 
whose elements are defined as
\begin{equation}\label{eq:defBS.2}
\eta_k(x) = \eta_{k_1}(x_1) \eta_{k_2}(x_2)\;, \qquad k_1, k_2 \geq 2 \;, 
\end{equation}
where we set $k=(k_1,k_2)$ and $x=(x_1, x_2)$; indices vary in 
${\cal K}=\{k \in \mathbb{N}^2 \ : \ k_i \geq 2 \text{ for }i=1,2 \}$.

The tensorized BS basis is no longer orthogonal, since
\begin{equation}\label{eq:orthogonal}
(\eta_k,\eta_m)_{H^1_0(\Omega)} = (\eta_{k_1},\eta_{m_1})_{H^1_0({I})}(\eta_{k_2},\eta_{m_2})_{L^2({I})}+
 (\eta_{k_1},\eta_{m_1})_{L^2({I})}(\eta_{k_2},\eta_{m_2})_{H^1_0({I})} \;,
\end{equation}
hence, by \eqref{eq:propBS.1} and \eqref{eq:propBS.2}, we have $ (\eta_k,\eta_m)_{H^1_0(\Omega)} \not = 0$ if and only if $k_1=m_1$ and $k_2-m_2 \in \{-2,0,2\}$, or
$k_2=m_2$ and $k_1-m_1 \in \{-2,0,2\}$. 

In the sequel, this basis and the corresponding index set $\mathcal{K}$ will be ordered by increasing total
degree $k_\text{tot}=k_1+k_2$ and, for the same total degree, by increasing values of $k_1$ 
(this will be referred to as the ``A" ordering, see Fig.~\ref{Fig:ordinamenti-A}). We will use the
following notational convention:
$$
m < k \qquad \text{means that } \eta_m \text{ preceeds } \eta_k \text{ in this ordering.}
$$
With this ordering, let us denote again by ${S}_\eta$ the stiffness matrix of the
tensorized Babu\v ska-Shen basis with respect to the $H^1_0(\Omega)$-inner product. The matrix $S_\eta$ is infinite-dimensional. 

In the sequel, we will also need finite dimensional matrices 
defined as follows. For fixed $p\geq 2$ define the set 
\begin{equation}
\mathcal{K}^{p}:=\{ k \in \mathcal{K}:  k_1+ k_2 \leq p \},
\end{equation}
with cardinality $\text{card}(\mathcal{K}^{p})\simeq p^2$,
which identifies the basis functions of total degree not greater than $p$, ordered as above.
The corresponding stiffness matrix will be denoted by $S_\eta^{p}$, which is a truncated version of $S_\eta$ (upper-left section); its sparsity pattern is shown in Fig.~\ref{fig:patt.Q}. Let us also introduce the index set 
\begin{equation}
\mathsquare{{\cal K}^{p}}:=\{k \in \mathcal{K} \ : \ k_i\leq p \text{ for }i=1,2\}
\end{equation}
whose elements, on the contrary,  are ordered lexicographically, i.e., by increasing values of $k_2$ and, for the same $k_2$, by increasing values of $k_1$ 
(this will be referred to as the ``B" ordering, see Fig.~\ref{Fig:ordinamenti-B}).
The corresponding stiffness matrix will be denoted by $\mathsquare{{S}^{p}_{\eta}}$. Recalling \eqref{eq:orthogonal} it is the sum of two Kronecker  products, i.e., $\mathsquare{{S}^{p}_{\eta}}= M_{p}\otimes I_p + I_p \otimes M_{p}$, where $M_p$ is the one-dimensional mass matrix truncated at order $p$ and $I_p$ is the identity matrix of the same size.
We observe that the position of an index 
 $k\in \mathsquare{{\cal K}^{p}}$
does depend on $p$ because of the lexicographical ordering; on the contrary,
the position of $k\in {\cal K}^{p}$ is independent of $p$ because it coincides with the position in the infinite dimensional index set $\mathcal{K}$. We note for further reference that 
$$\mathsquare{{\cal K}^{\tilde{p}}} \subset {\cal K}^{p} \subset \mathsquare{{\cal K}^{p}},$$
where $\tilde{p}$ is the integer part of $p/2$. This implies, thanks to a Rayleigh quotient argument, that
\begin{equation}\label{eigenvalues}
\lambda_{\min}(\mathsquare{{S}_{\eta}^{\tilde{p}}})\geq \lambda_{\min}({S}_{\eta}^{p}) \geq \lambda_{\min} (\mathsquare{{S}_{\eta}^{p}})\quad\text{and}\quad
\lambda_{\max}(\mathsquare{{S}_{\eta}^{\tilde{p}}})\leq \lambda_{\max}({S}_{\eta}^{p}) \leq \lambda_{\max} (\mathsquare{{S}_{\eta}^{p}}).
\end{equation}

\begin{figure}[t!]
\begin{center}
\subfigure[``A'' ordering of ${\cal K}^{p}$]{\includegraphics[width=0.32\textwidth]{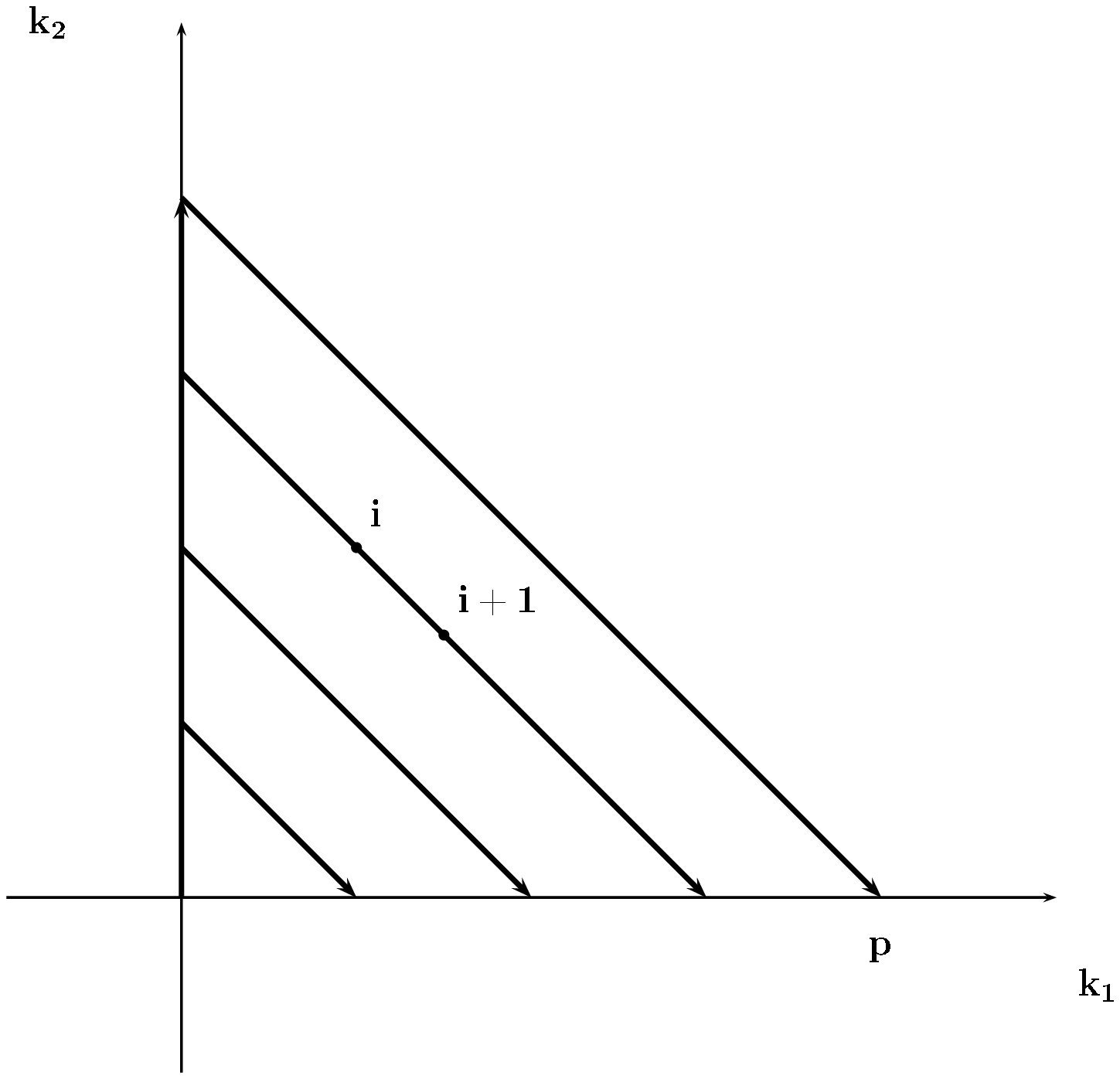}\label{Fig:ordinamenti-A}}
\subfigure[``B'' ordering of $\mathsquare{{\cal K}^{p}}$]{\includegraphics[width=0.32\textwidth]{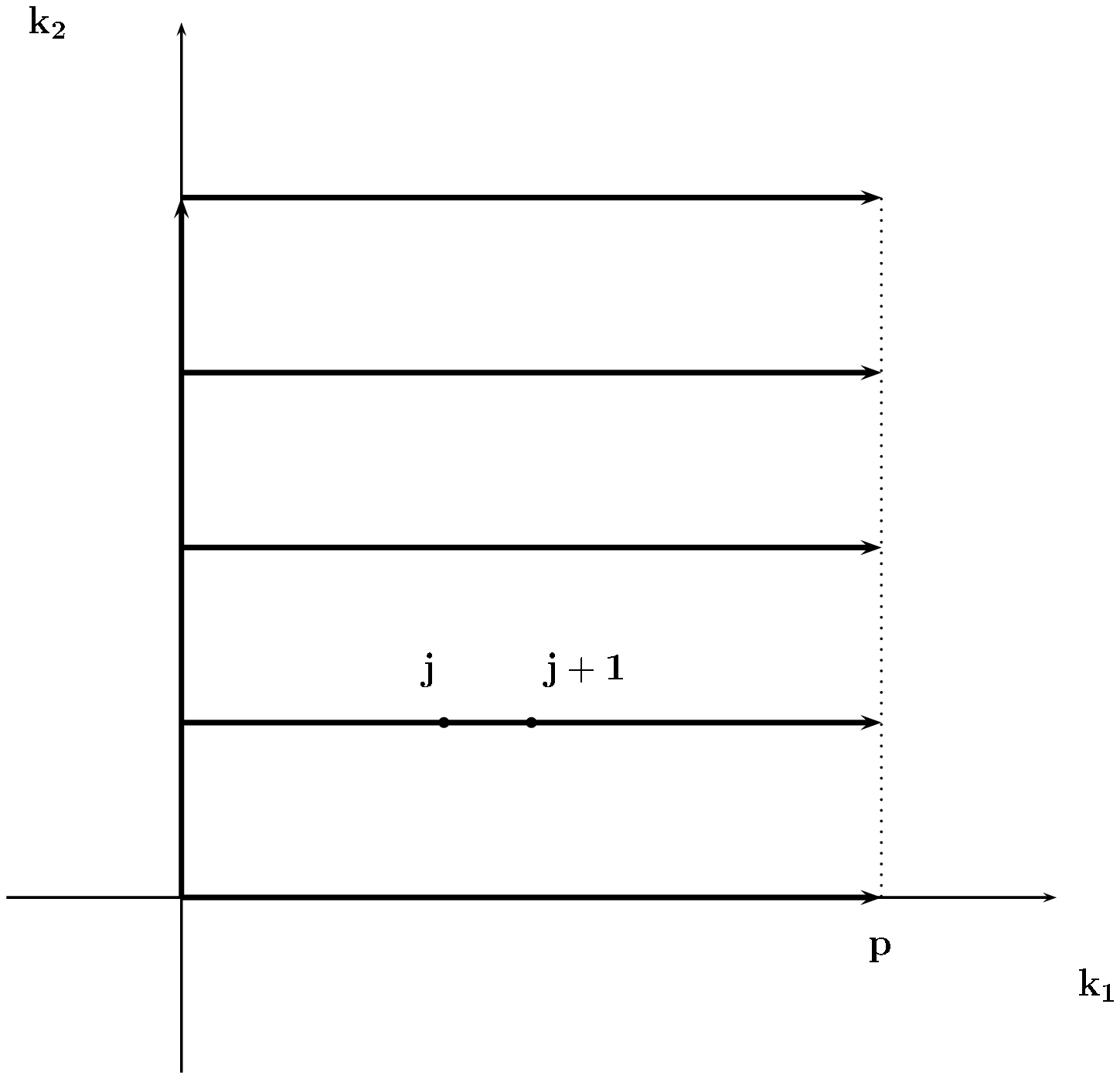}\label{Fig:ordinamenti-B}
}
\subfigure[``C'' ordering of $\mathsquare{{\cal K}^{p}}$]{\includegraphics[width=0.33\textwidth]{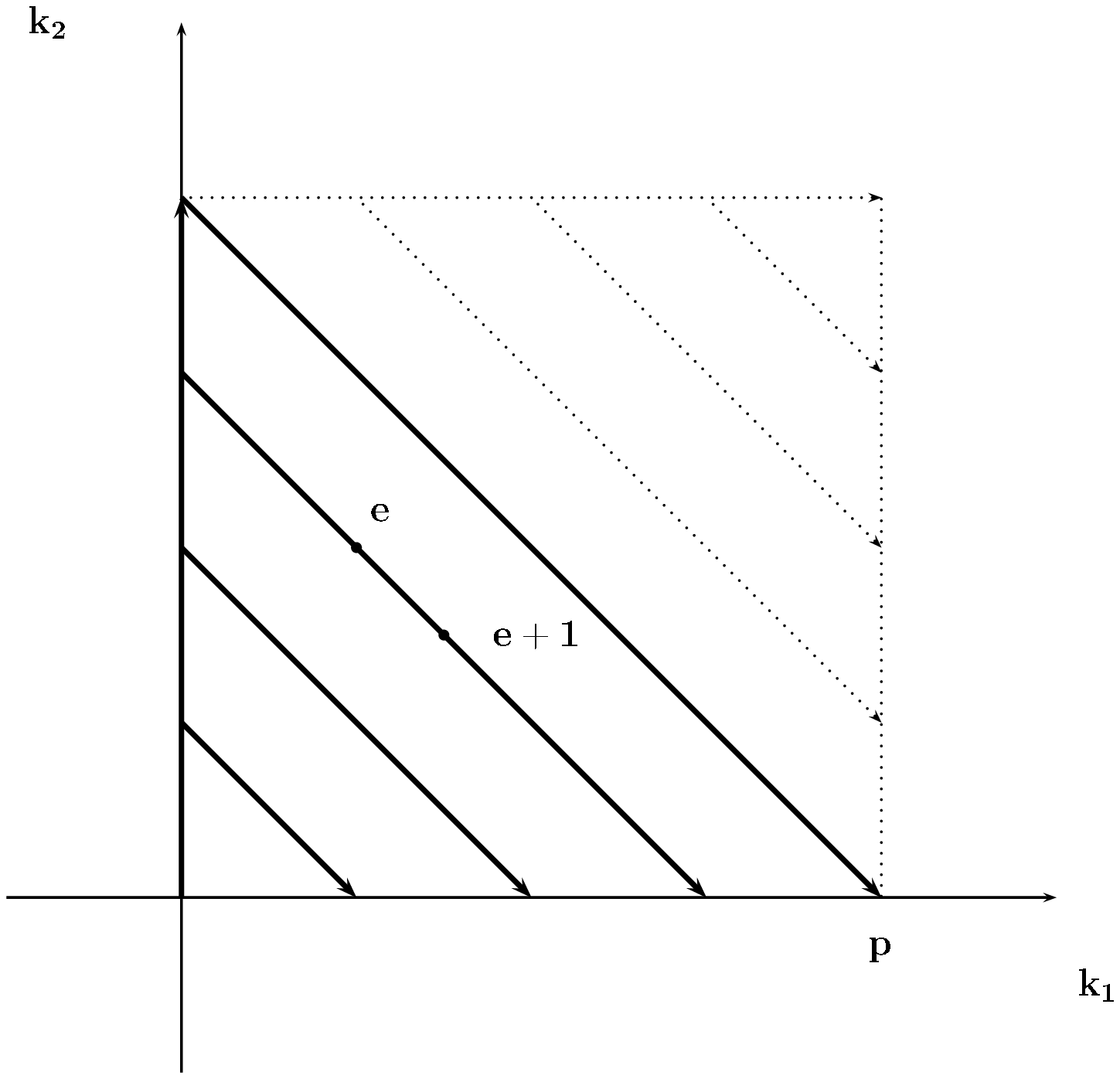}\label{Fig:ordinamenti-C}}
\end{center}
\caption{Orderings of index sets.}
\end{figure}


\subsection{Orthonormalization and quasi-orthonormalization}

Given any $v \in H^1_0(\Omega)$, let us expand it as $v = \sum_{k \in {\cal K}} \hat{v}_k \eta_k$ and let $\hat{v}$ be the
vector collecting its coefficients $\hat{v}_k$. Obviously, we cannot have a Parseval representation of the  $H^1_0(\Omega)$-norm of $v$ as in (\ref{eq:propBS.3}), since the basis is not orthonormal. However, we would be happy 
to have just
\begin{equation}\label{eq:propBS.5}
\Vert v \Vert_{H^1_0({\Omega})}^2  = \hat{v}^T {S}_\eta \, \hat{v} \simeq \hat{v}^T {\Delta}_\eta \, \hat{v}
= \sum_{k \in {\cal K}} |\hat{v}_k|^2 d_k\;,
\end{equation}
for some diagonal matrix ${\Delta}_\eta$. Taking as $\hat{v}$ each vector of the canonical basis, this
should imply 
$$
s_{kk} \simeq d_{k} \qquad \forall {k \in {\cal K}}\,,
$$
i.e., ${\Delta}_\eta$ should be uniformly spectrally equivalent to ${D}_{\eta}:={\sf diag}\,{S}_\eta$. Unfortunately,
the eigenvalues of the generalized eigenvalue problem 
${S}_\eta \, {w} = \lambda \, {D}_\eta\, {w}$
are not uniformly bounded away from $0$ and $+\infty$.  
These eigenvalues are indeed the eigenvalues of the matrix 
$\widetilde{S}_{\eta}= {D}_{\eta}^{-1/2} {S}_{\eta} {D}_{\eta}^{-1/2}$ which is the stiffness matrix of the $H_0^1$-normalized BS basis. In particular, if we consider the finite dimensional matrices 
$\mathsquare{{S}_{\eta}^{p}}$, it is known \cite[Proposition 5]{Maitre:1996} that their largest eigenvalues are uniformly bounded, but the smallest eigenvalues tend to $0$ as $p^{-2}$. Due to \eqref{eigenvalues} the same results hold for the matrices  $S_{\eta}^{p}$. 
In conclusion, there is no hope to have (\ref{eq:propBS.5}) with the tensorized BS basis, and 
a new basis has to be sought.

\subsubsection{Orthonormalization}
We have pursued the idea of orthonormalizing the BS basis, since, as shown above, many inner-products between its functions are indeed zero. To this end, we first observe that the BS basis functions can be grouped in four families depending on their parity in each of the two variables (recall that the univariate BS basis functions are alternately even and odd); according to \eqref{eq:orthogonal} functions belonging to different families are mutually $H_0^1$-orthogonal. Consequently, after reordering of rows and columns, the matrix ${S}_{\eta}$ is block-diagonal with four blocks ${S}^{++}_{\eta}$,
${S}^{+-}_{\eta}$, ${S}^{-+}_{\eta}$ and ${S}^{--}_{\eta}$ corresponding to all combinations of even ($+$) or odd ($-$) one-dimensional basis functions in each direction. In addition, it is convenient to deal with $H^1_0$-normalized basis functions, which lead to blocks $\widetilde{{S}}^{\pm \pm}_{\eta}$. 
For notational simplicity, from now on any normalized block $\widetilde{{S}}^{\pm\pm}_{\eta}$ will be again denoted by ${S}_{\eta}$ and the corresponding basis functions will still be indicated by $\eta_k$ for $k$ belonging to an index set again denoted by $\cal{K}$. Since the four blocks behave in an equivalent way, in the following numerical results will be given only for the even-even case.

As a first step we resort to the modified Gram-Schmidt
algorithm (see e.g. \cite{Golub-VanLoan:1996}), which allows one to build a sequence of functions
\begin{equation}\label{eq:defOBS}
\Phi_k = \sum_{m \leq k}  g_{mk} \eta_m \;,
\end{equation}
such that  $g_{kk}\not = 0$ and 
$$
(\Phi_k,\Phi_m)_{H^1_0(\Omega)} = \delta_{km} \qquad \forall \ k, m \in {\cal K}\;.
$$
We will refer to the collection $\Phi:=\{\Phi_k:\ k \in\mathcal{K}\}$ as the {\sl orthonormal Babu\v ska-Shen basis} (OBS basis) of the above chosen parity; obviously,
the associated stiffness matrix ${S}_\Phi$ with respect to the $H^1_0(\Omega)$-inner product is the identity matrix.
Equivalently, if ${G}=(g_{mk})$ is the upper triangular matrix which collects the coefficients generated by the
modified Gram-Schmidt algorithm above, one has
\begin{equation}\label{eq:propOBS.1}
{G}^T {S}_\eta {G} = {S}_\Phi = {I} \;,
\end{equation}
which shows that ${L}:={G}^{-T}$ is the lower-triangular Cholesky factor of ${S}_\eta$. 
It is important to notice that for any finite dimensional section $S_{\eta}^{p} $ of $S_{\eta}$ a similar relation holds, namely
$$ \big({G^{p}}\big)^T {S}^{p}_\eta {G}^{p} = {S}_{\Phi^{p}} = {I}^{p} \;,$$
where $G^{p}$ is the upper-left section of the infinite dimensional matrix $G$ with the same size as $S_{\eta}^{p}$ and $ \Phi^{p}:=\{\Phi_k:\ k \in\mathcal{K}^{p}\}$.  
This is an obvious consequence of the structure of the Gram-Schmidt algorithm  and the fact that the ordering of the basis functions in $\mathcal{K}^{p}$ is the same as in $\mathcal{K}$. On the contrary, 
due to the different orderings of the basis functions in $\mathsquare{{\cal K}^{p}}$ and $\mathcal{K}$, the GS algorithm applied to the matrix $\mathsquare{{S}^{p}_\eta}$ gives rise to a matrix $\mathsquare{{G}^{p}}$
which cannot be obtained by simply truncating the infinite dimensional $G$.

Unfortunately, unlike ${S}_\eta$, which is very sparse, the upper triangular matrix ${G}$ is full. 
However, the elements of $G$ exhibit nice decay features which are exemplified in Figure  \ref{fig:patt.G} again for $p=60$. The intensity of grey indicates that the entries of $G$ decay to zero moving away from the main diagonal, with different rates depending on the column. Indeed, recalling formula \eqref{eq:defOBS}, it is meaningful to monitor the decay of the elements $g_{mk}$ of $G$ that belong to a given column $k$ for the row index $m$ decreasing  from $k$ to $1$.
Figures \ref{fig:slow-fast}(left) and \ref{fig:slow-fast}(right) are representative of two extreme behaviors. Each plot in the figures represents the elements of a column of $G$ starting from the the main diagonal and moving towards the first row. Figure \ref{fig:slow-fast}(left) refers to columns $98,221,338$ which exhibit a ``slow'' decay. This behavior is typical of those columns associated to an index $k\in\mathcal{K}$ with $k_1$ close to $k_2$. On the other hand, Figure \ref{fig:slow-fast}(right) refers to columns $105,231,351$ which exhibit a ``fast'' decay, a typical behavior of those columns associated to an index $k\in\mathcal{K}$ for which $k_1$ and $k_2$ are very different from each other.

\begin{figure}[t!]
\begin{center}
\subfigure[Sparsity pattern of $S_{\eta}^{60}$.]{\includegraphics[width=0.47\textwidth]{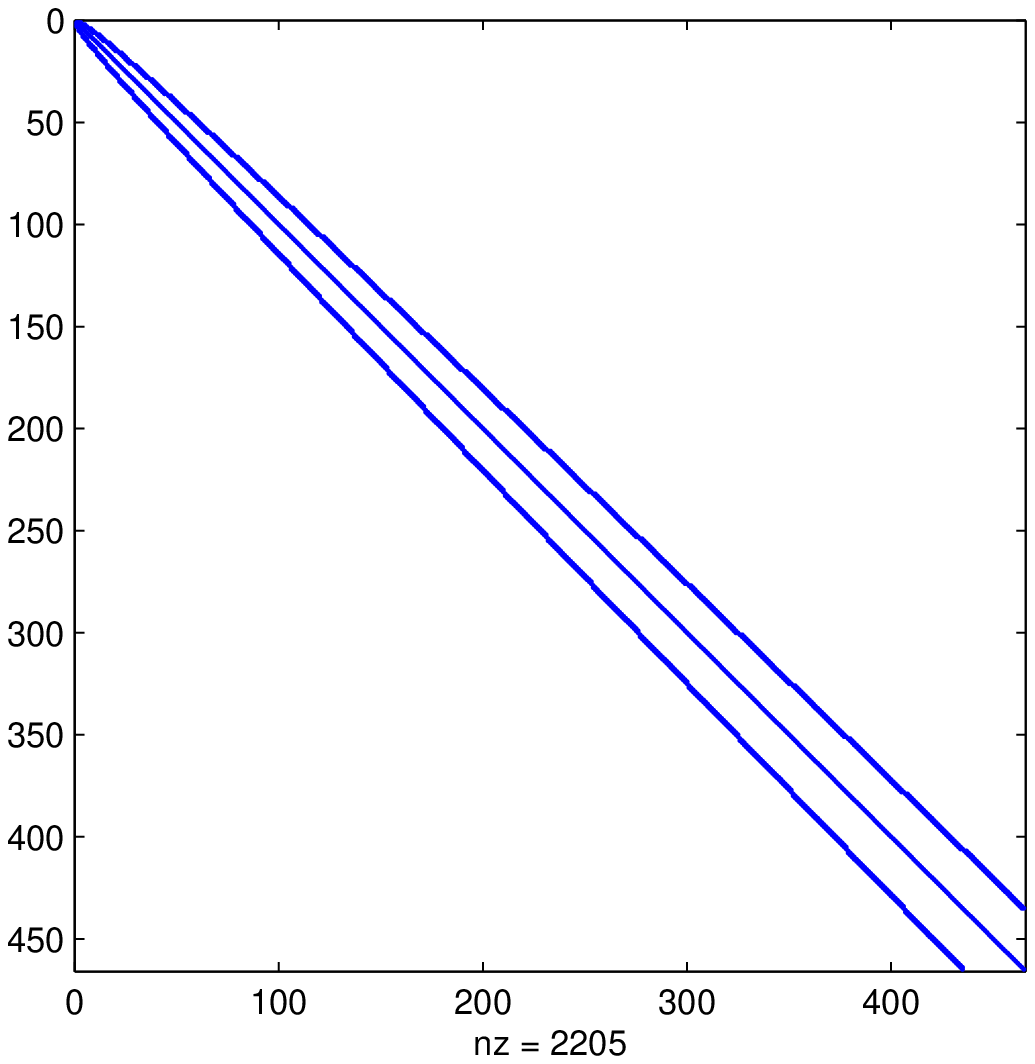}
\label{fig:patt.Q}}
\subfigure[Grey-scale size of the elements of $G^{60}$.]{\includegraphics[width=0.32\textwidth]{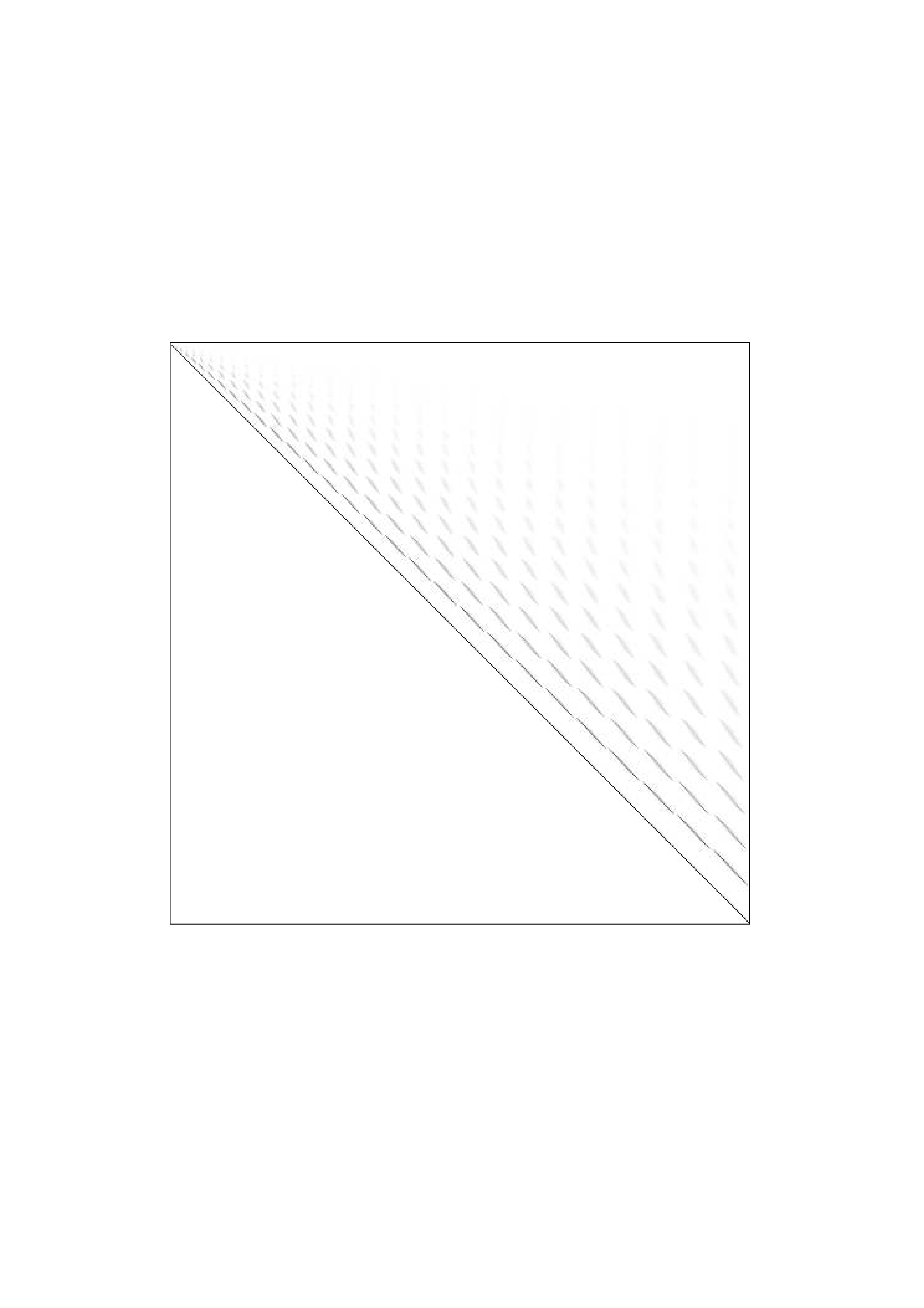}\label{fig:patt.G}}
\end{center}
\caption{Sparsity patterns}
\end{figure}
 

\begin{figure}[t!]
\begin{center}
\includegraphics[width=0.48\textwidth]{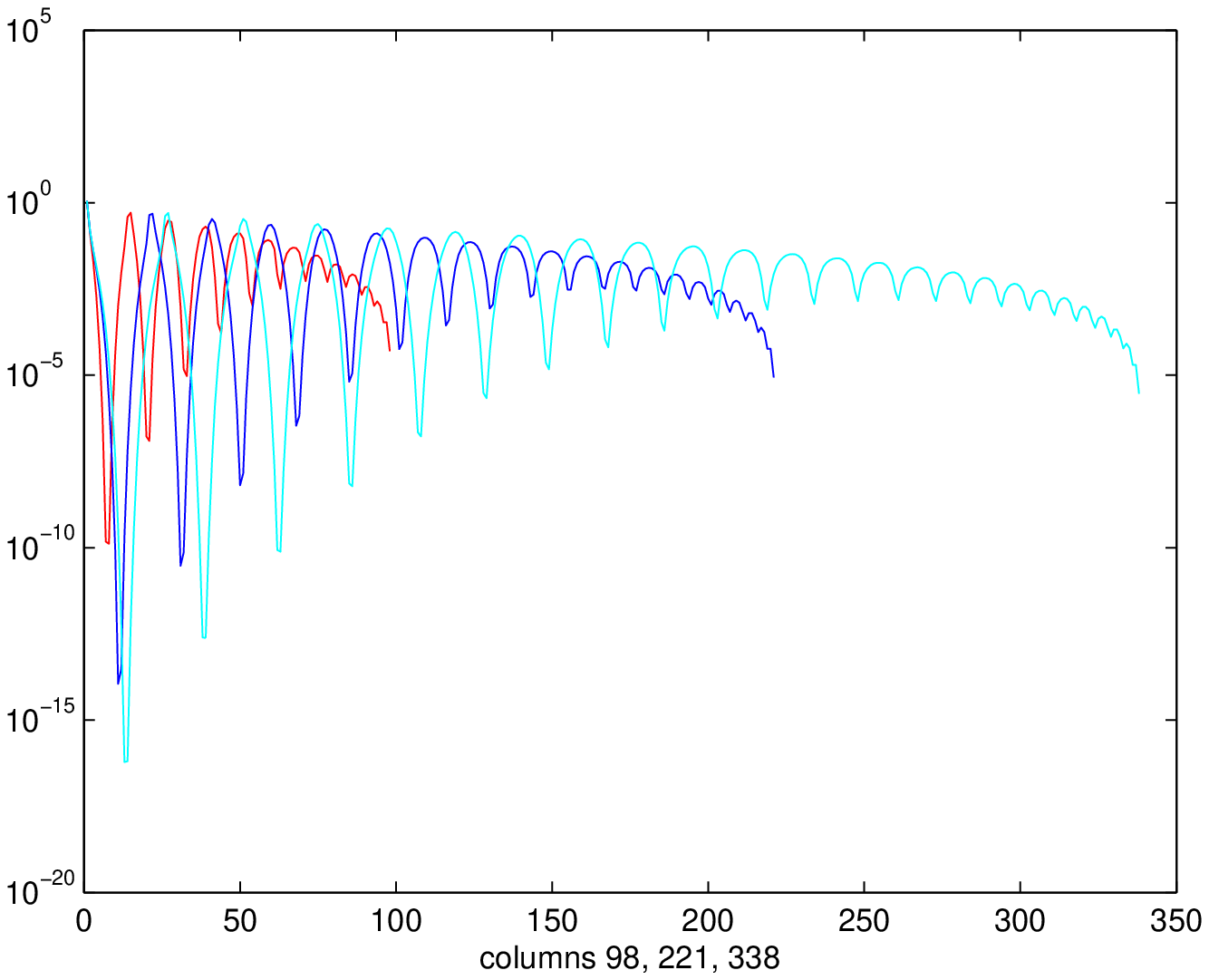}
\includegraphics[width=0.48\textwidth]{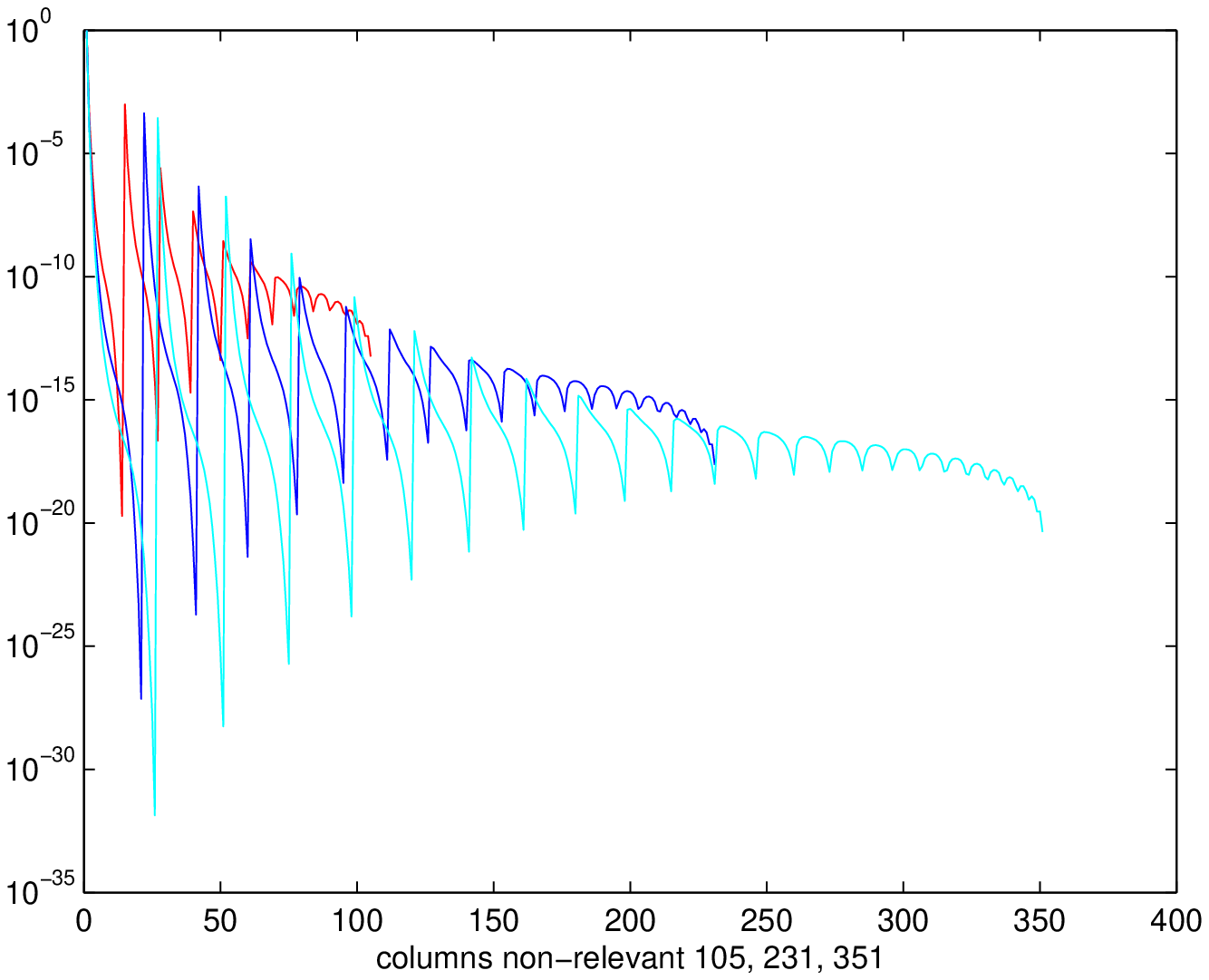}
\end{center}
\caption{Semi-logaritmic plot of some ``slow'' decaying columns (left) and ``fast'' decaying columns (right) of $G^{60}$ .}\label{fig:slow-fast}
\end{figure}


A theoretical upper bound for the elements of $G$ can be obtained applying Theorem 4.1 in \cite{Benzi-Tuma:2000}. 
It ensures that if $A$ is an SPD banded matrix with bandwidth $b$
such that $\max_i a_{ii}=1$, and if $A=LL^T$ is its
Cholesky factorization, then the entries of $G=L^{-T}$ obey an exponentially decaying 
bound away from the main diagonal, precisely 
\begin{equation}\label{eq:benzi-tuma}
|g_{ij}| \le \frac{2}{\lambda_{\min}}\rho^{j-i}, \qquad \rho = \left(\frac{\sqrt{\kappa}-1}{\sqrt{\kappa}+1}\right)^{2/b} ,
\end{equation}
where $\kappa=\lambda_{\max}/\lambda_{\min}$ is the condition number of $A$.
Note however that the observed decay of $g_{ij}$ is far from being monotonic (see Figures \ref{fig:slow-fast}(left) and \ref{fig:slow-fast}(right)). 

This oscillatory behavior can indeed be explained by resorting to the recent results presented in \cite{CSV:13}, as we are now going to detail. In what follows, for the ease of presentation, let ``A'', ``B'', and ``C'', resp., refer to the index orderings depicted in Fig. \ref{Fig:ordinamenti-A}, \ref{Fig:ordinamenti-B}, and \ref{Fig:ordinamenti-C}, resp.; the latter is a different ordering of the set $\mathsquare{{\cal K}^{p}}$, which coincides with the A-ordering on the subset ${\cal K}^{p}\subset\mathsquare{{\cal K}^{p}}$.
%
%
%
%
%
%
%
%
%
%
%
%
%
%
%
Furthermore, we drop the dependence on $p$ when using matrices. As the scheme C is an expansion of the scheme A,
the associated stiffness matrix $\SC$ has the form 
$$
\SC= 
\begin{bmatrix}
\SA & S_{12} \\ S_{12}^T & S_{22}
\end{bmatrix}
$$
where $\SA$ is the stiffness matrix associated with the scheme A.

Let $\SA = \LA \LA^T$ and let $\tLA \tLA^T =
\tilde S_{22} := S_{22} - S_{12}^T {{S}_{\eta}}^{-1}S_{12}$. It holds that
$$
\SC=
\begin{bmatrix}
\LA & 0 \\ 
S_{12}^T\LA^{-T} & \tLA
\end{bmatrix}
\begin{bmatrix}
\LA^T &  \LA^{-1} S_{12} \\ 0 & \tLA^T
\end{bmatrix} =: \LC \LC^T ,
$$
with $\LC$ banded with bandwidth $m$. Since all elements of $\SC$ are less
than one, we have 
\begin{equation}\label{aux:L} 
|{(\LC)}_{ij}|\le 1.
\end{equation}
Taking the inverse of the previous factorization, we have
$$
{\SC}^{-1} = \LC^{-T} \LC^{-1}=: \GC \GC^T
$$
with 
$$
\GC =
\begin{bmatrix}
\LA^{-T} &  -\LA^{-T} \LA^{-1} S_{12}  \tLA^{-T} \\ 
0 & \tLA^{-T}
\end{bmatrix} .
$$
This relation tells us that the elements of $\GA:=\LA^{-T}$ we are interested in, can be
read off from the upper left block of the factor $\GC$ of ${\SC}^{-1}$.

Now, we recall that
$$
\SC = P \SB P^T , \qquad
{\SC}^{-1} = P {\ISB}P^T ,
$$
where $\SB$ is the stiffness matrix obtained with a lexicographic
order (scheme B), and $P$ is a permutation matrix. It is worth observing that $\SC$ corresponds to the reverse Cuthill-McKee reordering of $\SB$ (see, e.g., \cite{Duff-Erisman-Reid:89}).

Denoting by $\pi(u)$  the permutation of the index $u$ defined by $P$, that is
$e_u^T P = e_{\pi(u)}^T $, we have 

\begin{equation}\label{eq:relation_inv}
({\SC}^{-1})_{uv} = ({\ISB})_{\pi(u),\pi(v)}.
\end{equation}

For every index $u$ in the diagonal ordering ``C'', $\pi(u)$ is the associated index in the lexicographical ordering ``B''. Next lemma details the construction of the map $\pi$.
\begin{lemma}
Let $n:=p-1$, $d$ with $1\leq d \leq 2n-1$ and $e$ with  $1\leq e \leq \min(d,n)-\max(0,d-n)$.
For 
\begin{equation}\label{eq:relation}
u=\sum_{s=1}^{d-1} (\min(s,n)-\max(0,s-n))+e,
\end{equation}
it holds
\begin{equation}\label{eq:map}
\pi(u)=n(\min(d,n)-e)+\max(0,d-n)+e.
\end{equation}
\end{lemma}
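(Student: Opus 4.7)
\medskip

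The plan is to identify each index $u$ in the C-ordering with a pair $(d,e)$, where $d$ denotes the diagonal (counted by total shifted degree) containing the corresponding pair $(k_1,k_2)$ and $e$ denotes the position along that diagonal, then read off $\pi(u)$ from the B-ordering by direct inspection. First I would introduce the shifted indices $j_i=k_i-1\in\{1,\ldots,n\}$, so that the elements of $\mathsquare{\mathcal K^p}$ correspond to pairs in $\{1,\ldots,n\}^2$. Setting $d=j_1+j_2-1$, the $d$-th diagonal consists of all $(j_1,j_2)$ with $j_1+j_2=d+1$, subject to $1\le j_i\le n$; this yields $j_1\in[\max(1,d+1-n),\min(d,n)]$, hence the cardinality of the $d$-th diagonal equals $\min(d,n)-\max(0,d-n)$, with $d$ ranging in $1\le d\le 2n-1$.

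Next I would parameterize the points on the $d$-th diagonal: since the C-ordering follows the A-ordering on each diagonal, i.e.\ increasing values of $k_1$ (equivalently $j_1$), the $e$-th element on the diagonal has coordinates
\[
j_1=\max(0,d-n)+e,\qquad j_2=d+1-j_1=\min(d,n)+1-e,
\]
with $e$ in the admissible range. The position of this element in the full C-ordering is obtained by summing the cardinalities of the preceding diagonals and adding $e$, which gives precisely the expression \eqref{eq:relation} for $u$.

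To compute $\pi(u)$, I would simply use the definition of the B-ordering (lexicographic in $(j_2,j_1)$): the element with shifted coordinates $(j_1,j_2)$ occupies position $n(j_2-1)+j_1$. Substituting the formulas above for $(j_1,j_2)$ in terms of $(d,e)$ yields
\[
\pi(u)=n\bigl(\min(d,n)-e\bigr)+\max(0,d-n)+e,
\]
which is exactly \eqref{eq:map}. Both formulas are defined purely by the geometric description of the two orderings, so the proof reduces to a bookkeeping exercise; the only mildly delicate point is the case split $d\le n$ vs.\ $d>n$, which is already absorbed into the $\min$/$\max$ notation and simply needs to be checked to give the stated ranges for $e$ and the correct leftmost $j_1$ on each diagonal.
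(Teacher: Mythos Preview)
Your proposal is correct and follows essentially the same approach as the paper: identify $(d,e)$ as the diagonal index and the position along it, count preceding elements to obtain \eqref{eq:relation}, and then read off the lexicographic position to get \eqref{eq:map}. The paper merely asserts that ``straightforward calculations show'' the formula for $\pi(u)$, whereas you actually carry out those calculations via the explicit coordinates $j_1=\max(0,d-n)+e$, $j_2=\min(d,n)+1-e$ and the B-position $n(j_2-1)+j_1$; this is a welcome elaboration rather than a different method.
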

\begin{proof}
The parameter $d$ with $1\leq d \leq 2n-1$ is the index numbering the diagonals in the diagonal ordering ``C''. The index $d=1$ corresponds to the first diagonal made of a single element  (lower-left corner of the square in Figure \ref{Fig:ordinamenti-C}), while $d=n$ is associated to the main diagonal
and $d=2n-1$ to the last diagonal  (upper-right corner of the square in Figure \ref{Fig:ordinamenti-C}).
The parameter $e$ with $1\leq e \leq \min(d,n)-\max(0,d-n)$ is the index numbering the elements on the $d$-th diagonal (from upper-left to lower-right). 
Note that on the $d$-th diagonal there are exactly $\min(d,n)-\max(0,d-n)$ elements.
Let $(e,d)=(e(u), d(u))$ be such that $u=\sum_{s=1}^{d-1} (\min(s,n)-\max(0,s-n))+e$,
i.e., the element $u$ is associated to the $e$-th element on the $d$-th diagonal.
Then, straightforward calculations show that $\pi(u)=n(\min(d,n)-e)+\max(0,d-n)+e$.
\end{proof}

Let $u,v$ be two indices in the diagonal ordering ``C'' and $(e,d)=(e(u),d(u))$ and $(f,g)=(f(v),g(v))$ be 
such that equation \eqref{eq:relation} holds.  We preliminary want to estimate \eqref{eq:relation_inv}.
For ${\mathsquare{{S}_{\eta}^{p}}}$ we know that the following result holds.
\begin{proposition}\label{prop:decay1}(\cite[Proposition 2.7]{CSV:13}).
Let $\alpha=n(m-1)+\ell$ and $\beta=n(j-1)+i$
for proper choices of $i,j,\ell,m$. 
\begin{enumerate}
\item If $\ell=i$ or $m=j$ then there exists a positive constant $\gamma_1=\gamma_1(\kappa(\mathsquare{{S}_{\eta}}))$ such that 
\begin{equation}\label{eq:est_inv_B:1}
|(\ISB)_{\alpha,\beta}| \le \gamma_1 \frac 1 {\sqrt{{\mathfrak n}_1}}
\end{equation}
with ${\mathfrak n}_1 = {\mathfrak n}_1(\alpha,\beta)=\vert \ell-i\vert + \vert m-j\vert -1$.

\item If $\ell\not=i$ and $m\not=j$ then there exists a positive constant $\gamma_2=\gamma_2(\kappa(\mathsquare{{S}_{\eta}}))$
such that 
\begin{equation}\label{eq:est_inv_B:2}
|(\ISB)_{\alpha,\beta}| \le \gamma_2 \frac 1 {\sqrt{{\mathfrak n}_2}}
\end{equation}
with ${\mathfrak n}_2 = {\mathfrak n}_2(\alpha,\beta)=\vert \ell-i\vert + \vert m-j\vert -2$.
\end{enumerate}
\end{proposition}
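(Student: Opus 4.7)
Since Proposition~\ref{prop:decay1} is quoted verbatim from \cite{CSV:13}, I sketch how one may approach its proof from scratch. The starting point is the Kronecker structure of $\mathsquare{{S}_\eta^{p}}$ already recorded in the paper, namely $\mathsquare{{S}_\eta^{p}} = M\otimes I + I\otimes M$ (up to the bounded diagonal rescaling coming from the $H^1_0$-normalization, which will only affect the final constants). Diagonalizing the 1D mass matrix $M = Q\Lambda Q^T$ with $\Lambda = \mathrm{diag}(\lambda_1,\ldots,\lambda_n)$, and using the closed form for the inverse of a Kronecker sum, one has
\begin{equation*}
\bigl(\ISB\bigr)_{\alpha,\beta} \;=\; \sum_{s,t=1}^{n}\frac{Q_{\ell s}\,Q_{i s}\,Q_{m t}\,Q_{j t}}{\lambda_s+\lambda_t}.
\end{equation*}
Using $\frac{1}{\lambda_s+\lambda_t}=\int_0^\infty e^{-(\lambda_s+\lambda_t)\tau}\,d\tau$ separates the two tensor directions and yields
\begin{equation*}
\bigl(\ISB\bigr)_{\alpha,\beta} \;=\; \int_0^\infty \bigl(e^{-\tau M}\bigr)_{\ell i}\,\bigl(e^{-\tau M}\bigr)_{m j}\,d\tau,
\end{equation*}
so the problem reduces to estimating the off-diagonal entries of the matrix exponential of the pentadiagonal matrix $M$.

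At this point the plan is to invoke a Bernstein-type bound for entries of matrix functions of banded matrices, in the same spirit of the estimate \eqref{eq:benzi-tuma} already used for the Cholesky factor: approximating $e^{-\tau x}$ by its best polynomial approximation on the spectrum of $M$ gives a pointwise estimate of the form $|(e^{-\tau M})_{\ell i}| \lsim C(\tau)\,\rho(\tau)^{|\ell-i|/b}$ with $b=2$ the bandwidth of $M$ and $\rho(\tau)\in(0,1)$, whose dependence on $\tau$ and on $\kappa(M)$ is explicit. One then inserts this bound into the integral, splits $(0,\infty)$ at a threshold $\tau_\star$ depending on $|\ell-i|+|m-j|$ to balance the short-time polynomial blow-up of $C(\tau)$ against the long-time contribution controlled by $e^{-\lambda_{\min}(M)\tau}$, and combines the two factors by a Cauchy--Schwarz step. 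The two sub-cases of the statement arise naturally from this analysis: when $\ell\neq i$ and $m\neq j$, both factors of the integrand provide non-trivial off-diagonal decay, which accounts for the $-2$ offset in $\mathfrak n_2$; when instead one of the differences vanishes, the corresponding factor reduces to a diagonal term that only contributes a $\tau$-dependent bound, leading to the weaker $-1$ offset in $\mathfrak n_1$.

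The main difficulty is to extract the claimed polynomial $1/\sqrt{\cdot}$ decay out of what are, in each regime of $\tau$, only exponential bounds. The point is that as $\tau\to\infty$ the Bernstein rate $\rho(\tau)\to 1$, so the exponential off-diagonal decay of $e^{-\tau M}$ degenerates and only the mass-matrix tail $e^{-\lambda_{\min}(M)\tau}$ keeps the integral convergent; this is, at the discrete level, the footprint of the logarithmic singularity of the continuous Green's function of $-\Delta$ in two dimensions. The square-root rate emerges from the precise balance between the short-time Bernstein regime and the long-time eigenvalue regime, and tracking constants through the argument shows that $C(\tau)$ and $\rho(\tau)$, hence the final $\gamma_1$ and $\gamma_2$, depend only on $\kappa(\mathsquare{{S}_\eta})$, as stated in the proposition.
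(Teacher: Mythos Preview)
The paper does not give its own proof of Proposition~\ref{prop:decay1}; it is quoted from \cite{CSV:13} and used as a black box. So there is nothing to compare against here in the strict sense.

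That said, your sketch is headed in the right direction and is in fact close to the strategy of \cite{CSV:13}: the Kronecker-sum structure $\mathsquare{{S}_\eta^{p}}=M\otimes I+I\otimes M$, the Laplace/exponential integral representation $(\lambda_s+\lambda_t)^{-1}=\int_0^\infty e^{-(\lambda_s+\lambda_t)\tau}\,d\tau$, and the reduction to bounds on the entries of $e^{-\tau M}$ for the banded one-dimensional mass matrix are exactly the ingredients used there. The separation into the two cases (one index coincidence versus none) and the resulting offsets $-1$ and $-2$ also arise in that analysis for the reason you indicate: a diagonal factor $(e^{-\tau M})_{ii}$ contributes no off-diagonal decay, only a $\tau$-dependent bound.

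The one place where your sketch is genuinely incomplete is the passage from exponential-in-$\tau$ bounds on $(e^{-\tau M})_{\ell i}$ to the \emph{polynomial} $1/\sqrt{\mathfrak n}$ rate in the final estimate. Your heuristic (``balance the short-time Bernstein regime against the long-time eigenvalue regime'') is qualitatively correct, but as written it does not pin down the exponent $1/2$, and a careless optimization would produce an exponential bound with a $p$-dependent rate rather than the claimed uniform algebraic one. In \cite{CSV:13} this step is carried out by exploiting precise information on the spectrum of $M$ (in particular that $\lambda_{\min}(M)$ scales like $p^{-2}$) and by a sharper, $\tau$-dependent control of the entries of $e^{-\tau M}$ than a generic Bernstein bound provides; the square root then comes out of the resulting one-dimensional integral after the optimal splitting. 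If you want to turn your sketch into an actual proof, this is the computation you need to make quantitative.
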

Using \eqref{eq:map} and Proposition \ref{prop:decay1} we have the following result on the entry decay of the matrix ${(\SC)}^{-1}$. 
\begin{corollary}\label{prop:decay-2}
Let $u,v$ be indexes in the C-ordering such that the following holds
\begin{eqnarray}
u=\sum_{s=1}^{d-1} (\min(s,n)-\max(0,s-n))+e,\,&&
v=\sum_{s=1}^{g-1} (\min(s,n)-\max(0,s-n))+f,\nonumber
\end{eqnarray}
for proper choices of the pairs $(d,e)$ and $(g,f)$. Consider \eqref{eq:est_inv_B:2}
for $\pi(u),\pi(v)$ such that 
\begin{eqnarray}
\pi(u)=n(\min(d,n)-e)+\max(0,d-n)+e,\,&& \pi(v)=n(\min(g,n)-f)+\max(0,g-n)+f.\nonumber
\end{eqnarray}
Let $i,j,\ell,m$ such that  $\pi(u)=n(m-1)+\ell$ and $\pi(v)=n(j-1)+i$ then it holds
\begin{enumerate}
\item If $\ell=i$ or $m=j$ there exists a positive constant 
$\gamma_1=\gamma_1(\kappa(\SB))$ 
such that 
\begin{equation}\label{eq:est_inv_B:a}
|(\ISB)_{\pi(u),\pi(v)}| \le \gamma_1 \frac 1 {\sqrt{{\mathfrak n}_1}}
\end{equation}
with ${\mathfrak n}_1 = {\mathfrak n}_1(\pi(u),\pi(v))=\vert \ell-i\vert + \vert m-j\vert -1$.
\item If $\ell\not=i$ and $m\not=j$ then there exists a positive constant $\gamma_2=\gamma_2(\kappa(\SB))$ such that 
\begin{equation}\label{eq:est_inv_B:b}
|(\ISB)_{\pi(u),\pi(v)}| \le \gamma_2 \frac 1 {\sqrt{{\mathfrak n}_2}}
\end{equation}
with ${\mathfrak n}_2 = {\mathfrak n}_2(\pi(u),\pi(v))=\vert \ell-i\vert + \vert m-j\vert -2$.
\end{enumerate}
\end{corollary}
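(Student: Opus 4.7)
The plan is to combine the explicit permutation formula from the preceding lemma with the already-established decay bounds for $\ISB$ in Proposition \ref{prop:decay1}, transporting them through the identity \eqref{eq:relation_inv}. Since $\SC = P\SB P^T$ and hence ${\SC}^{-1} = P\,\ISB\,P^T$, any pointwise bound on the entries of $\ISB$ immediately yields a pointwise bound on the corresponding entries of ${\SC}^{-1}$ once we can identify which B-indexed pair corresponds to a given C-indexed pair; the lemma provides precisely this identification.

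First, I would take arbitrary C-ordered indices $u,v$ and write them in the form \eqref{eq:relation} with unique pairs $(d,e)$ and $(g,f)$ within their admissible ranges. Next I would apply the explicit formula \eqref{eq:map} from the lemma to compute $\pi(u)$ and $\pi(v)$, and recast them in the lexicographic form required by Proposition \ref{prop:decay1}, namely $\pi(u) = n(m-1)+\ell$ and $\pi(v) = n(j-1)+i$. A direct comparison of \eqref{eq:map} with this form yields
\[
m = \min(d,n)-e+1, \qquad \ell = \max(0,d-n)+e,
\]
and analogous expressions for $(i,j)$ in terms of $(f,g)$; one has to check case by case (namely $d \leq n$ versus $d>n$, and similarly for $g$) that the resulting $\ell$ and $i$ indeed lie in $\{1,\dots,n\}$, so that Proposition \ref{prop:decay1} applies.

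Finally, I would invoke Proposition \ref{prop:decay1} on the pair $\pi(u),\pi(v)$: the hypothesis $\ell=i$ or $m=j$ gives the bound \eqref{eq:est_inv_B:a} with ${\mathfrak n}_1=|\ell-i|+|m-j|-1$, while the complementary case $\ell\neq i$ and $m\neq j$ gives \eqref{eq:est_inv_B:b} with ${\mathfrak n}_2=|\ell-i|+|m-j|-2$. Note that the constants $\gamma_1,\gamma_2$ depend only on $\kappa(\mathsquare{{S}_{\eta}})=\kappa(\SB)$, which equals $\kappa(\SC)$ because the two matrices are unitarily similar via $P$; this is consistent with the statement of the corollary.

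There is essentially no substantive obstacle: the heavy lifting of the decay analysis is already carried out in \cite{CSV:13}, and the lemma supplies the explicit bijection between the two orderings. The only mildly delicate point is the bookkeeping introduced by the piecewise-defined quantities $\min$ and $\max$ in \eqref{eq:map}, which forces one to split into the regimes below and above the main anti-diagonal ($d\leq n$ vs.\ $d>n$, and analogously for $g$) to confirm that the images under $\pi$ land in the range of admissible B-indices and satisfy the case distinction required by Proposition \ref{prop:decay1}.
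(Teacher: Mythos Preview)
Your proposal is correct and takes essentially the same approach as the paper: the paper's proof is a single line, simply invoking Proposition~\ref{prop:decay1} with $\alpha=\pi(u)$ and $\beta=\pi(v)$. Your additional bookkeeping (the case analysis $d\le n$ vs.\ $d>n$ to check that $\ell,i\in\{1,\dots,n\}$) is not required by the paper, but it does no harm and is the natural elaboration of the same argument.
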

\begin{proof}
It is sufficient to employ Proposition \ref{prop:decay1} with $\alpha=\pi(u)$ and $\beta=\pi(v)$.
\end{proof}

Now we are ready to estimate the entries of $\GC={\SC}^{-1} \LC$.
Using Corollary \ref{prop:decay-2} we have the following estimate.
\begin{proposition}\label{prop:G-decay}
Under the assumptions of Corollary \ref{prop:decay-2}, for ${\mathfrak n}(\cdot,\cdot)={\mathfrak n}_i(\cdot,\cdot)$, $i=1,2$
depending on the values of the indexes $u,v$, it holds
\begin{equation}
|{(\GC)}_{u,v}| \le \gamma \frac{\tilde{p}} {{\sqrt{\mathfrak n(\pi(u),\pi(v^*))}} }\label{eq:inv_C} 
\end{equation}
where $\tilde{p}$ is the integer part of $p/2$ and $v^*=\text{argmin}_{v\leq w \leq v+\tilde{p}-1}  \mathfrak n(\pi(u),\pi(w))$.
\end{proposition}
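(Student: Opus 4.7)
The plan is to exploit the identity $\GC = {\SC}^{-1}\LC$ by combining the entrywise bound \eqref{aux:L} on $\LC$ with the banded structure of $\LC$ and the decay estimate of Corollary \ref{prop:decay-2} on ${\SC}^{-1}$. Expanding the product gives
\[
(\GC)_{u,v} \,=\, \sum_{w} ({\SC}^{-1})_{u,w}\,(\LC)_{w,v},
\]
so the first reduction is to restrict the sum to those indices $w$ for which $(\LC)_{w,v}$ can possibly be nonzero. Since $\LC$ is lower triangular and banded with bandwidth at most $\tilde{p}$ (this should be read off from the block factorization written just above \eqref{aux:L}, using that in the C-ordering each diagonal of $\SC$ has at most $\tilde{p}$ entries, hence the two diagonal blocks $\LA$ and $\tLA$ together with the coupling block live in a band of width $\tilde{p}$), the only surviving terms are those with $v\le w \le v+\tilde{p}-1$.

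Next I would bound each surviving term. By \eqref{aux:L} we have $|(\LC)_{w,v}|\le 1$, so
\[
|(\GC)_{u,v}| \,\le\, \sum_{w=v}^{v+\tilde{p}-1} |({\SC}^{-1})_{u,w}| \,=\, \sum_{w=v}^{v+\tilde{p}-1} |(\ISB)_{\pi(u),\pi(w)}|.
\]
For every individual $w$ in this range, Corollary \ref{prop:decay-2} (with $\alpha=\pi(u)$, $\beta=\pi(w)$) applies: depending on whether the coordinates of $\pi(u)$ and $\pi(w)$ in the lexicographic splitting $\pi(u)=n(m-1)+\ell$, $\pi(w)=n(j-1)+i$ share a common index, we get the bound $\gamma/\sqrt{\mathfrak{n}_1(\pi(u),\pi(w))}$ or $\gamma/\sqrt{\mathfrak{n}_2(\pi(u),\pi(w))}$, which we can uniformly denote by $\gamma/\sqrt{\mathfrak{n}(\pi(u),\pi(w))}$ with $\gamma=\max\{\gamma_1,\gamma_2\}$.

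Finally I would take the worst-case estimate across the $\tilde{p}$ surviving terms: since each of them is controlled by $\gamma/\sqrt{\mathfrak{n}(\pi(u),\pi(w))}$, replacing every $w$ by the value $v^*$ which minimizes $\mathfrak{n}(\pi(u),\pi(w))$ on $\{v,\ldots,v+\tilde{p}-1\}$ yields the largest summand, so that
\[
|(\GC)_{u,v}| \,\le\, \tilde{p}\,\frac{\gamma}{\sqrt{\mathfrak{n}(\pi(u),\pi(v^*))}},
\]
which is precisely \eqref{eq:inv_C}. (If $v+\tilde{p}-1$ exceeds the last admissible C-index the sum simply has fewer terms and the same bound applies.) The main obstacle I expect is the careful justification of the bandwidth $\tilde{p}$ of $\LC$: one must verify that the Schur-complement block $\tLA$ inherits the bandedness from $\SA$ and $S_{22}$ through the C-ordering, and that the coupling term $S_{12}^T\LA^{-T}$ does not destroy this property. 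The rest of the argument is essentially a bookkeeping step built on Corollary \ref{prop:decay-2} and the index map $\pi$ computed in the preceding lemma.
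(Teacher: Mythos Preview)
Your proof is correct and follows essentially the same route as the paper's: expand $\GC=(\SC)^{-1}\LC$, restrict the sum to the band of $\LC$, apply Corollary~\ref{prop:decay-2} together with $|(\LC)_{w,v}|\le 1$, and then bound each of the $\tilde p$ surviving terms by the worst one at $w=v^*$. Your only stated concern is unnecessary: the bandwidth of $\LC$ follows directly from the standard fact that the Cholesky factor of a banded SPD matrix inherits the same lower bandwidth as the matrix itself (applied to $\SC$), so no separate analysis of the Schur-complement block $\tLA$ or the coupling term $S_{12}^T\LA^{-T}$ is required.
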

\begin{proof}
It is enough to proceed as in \cite[Theorem 4.1] {Benzi-Tuma:2000}, with $\tilde{p}$ corresponding to the matrix bandwidth, to get the result. Indeed, using $\GC={(\SC)}^{-1} \LC$ together with \eqref{eq:relation_inv}, \eqref{eq:est_inv_B:a} and \eqref{eq:est_inv_B:b} it holds
\begin{eqnarray}
|{(\GC)}_{u,v}| &\le& \sum_{w=v}^{v+b-1} | ({\SC}^{-1})_{u,w}| \, |{(\LC)}_{w,v}|
= \sum_{w=v}^{v+b-1}  | ({\ISB})_{\pi(u),\pi(w)}|  |{(\LC)}_{w,v}| \nonumber\\
&\le& \gamma \frac b {{\sqrt{\mathfrak n(\pi(u),\pi(v^*))}} }
\end{eqnarray}
where we employ \eqref{aux:L} and set $v^*=\text{argmin}_{v\leq w \leq v+b-1}  \mathfrak n(\pi(u),\pi(w))$. 
\end{proof}

Let us briefly comment on the estimate \eqref{eq:inv_C}. We first consider the term $\mathfrak n_i(\pi(u),\pi(v))$ where, for the sake of exposition, we fix the index $\pi(u)$ and vary the index $\pi(v)$ in a given range of values. This is equivalent to fixing the pair $(\ell, m)$ (associated to $\pi(u)$) and varying the pair $(i,j)$ (associated to $\pi(v)$) in a given bounded subset $\mathcal{B}$ of $\mathbb{N}^2$. As the quantity $\mathfrak n_i(\pi(u),\pi(v))=\vert \ell-i\vert + \vert m-j\vert -i$ represents a (modified) $\ell^1$-distance between  the points $(\ell, m)$ and $(i,j)$, it follows that varying this latter point can yield (depending on the choice of $\mathcal{B}$) a non-monotone behavior for $\mathfrak n_i(\pi(u),\pi(v))$ (see Figure~ \ref{Fig:distance}). One can conclude similarly on the non-monotone behavior of the denominator $\mathfrak n_i(\pi(u),\pi(v^*))$ in \eqref{eq:inv_C}, hence justifying the observed oscillatory behavior of the entries of $\GC$ (and hence of $G^{p}$).

\begin{figure}[t!]
\begin{center}
\includegraphics[width=0.30\textwidth]{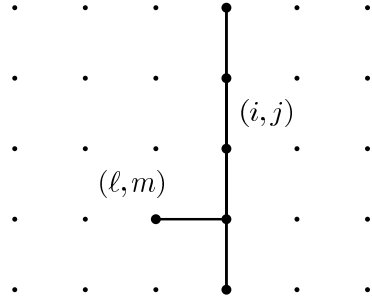}
\end{center}
\caption{An example of the non-monotone behavior of $\mathfrak n_i(\pi(u),\pi(v))$. We fix the pair $(\ell,m)$ (associated to $\pi(u)$) and vary $(i,j)$ (associated to $\pi(v)$). Moving vertically (bottom-up) the quantity $\vert \ell-i\vert + \vert m-j\vert -1$ first decreases and then increases.}\label{Fig:distance}

\end{figure}

\subsubsection{Quasi-orthonormalization}
The above documented features suggest us to invoke  procedures to wipe-out from ${G}$ a
large portion of non-zero entries, without significantly modifying the properties of the OBS basis.
A realistic approach will lead us to modify only an upper-left section of the infinite-dimensional matrix $G$ (corresponding to a certain maximal polynomial degree) and leave the rest of $G$ unchanged. We will consider and compare various strategies for compressing the chosen section of that matrix.

In all cases we will use the following notation: $G_t$ will indicate the matrix obtained from $G$ by setting to zero a certain finite set of entries, $E:=G_t-G$ will be the matrix measuring the truncation quality. We assume
${\rm diag}(G_t)= {\rm diag}(G)$, so that ${\rm diag}(E)=0$. Finally, we introduce the matrix 
\begin{equation}\label{eq:defNOBS:1}
{S}_\phi = {G}^T_t {S}_\eta {G}_t 
\end{equation}
which we interpret as the stiffness matrix associated to the modified BS basis defined in analogy to\eqref{eq:defOBS} as 
\begin{equation}\label{eq:defNOBS}
\phi_k = \sum_{m \in \mathcal{M}_t(k)}  g_{mk} \eta_m \;
\end{equation} 
where $\mathcal{M}_t(k)=\{m: E_{mk}=0\}$. This forms a new basis in $H^1_0(\Omega)$ (it is a basis since $k \in \mathcal{M}_t(k)$ and $g_{kk}\not =0$), that will be
termed a {\sl nearly-orthonormal Babu\v ska-Shen basis} (NOBS basis).

Let $D_{\phi} = {\rm diag}\,S_\phi$. We want to find a strategy to build $G_t$ such that 
the eigenvalues $\lambda$ of the problem
\begin{eqnarray}\label{eqn:maineigpb}
S_\phi x = \lambda D_{\phi} x
\end{eqnarray}
are close to one and bounded from above and away from 0 independently of the polynomial degree.
To this end the following result provides a sharp limitation on the eigenvalues in terms of the error matrix $E$. In the following, we employ the matrix norm induced by the Euclidean norm for vectors.

\begin{proposition}\label{prop:eig-bound}
Let ${L}:={G}^{-T}$ be the lower-triangular Cholesky factor of ${S}_\eta$ and $E=G_t-G$.
Assume that $\|L^TE \| < 1$. Then the eigenvalues 
$\lambda$ of \eqref{eqn:maineigpb} satisfy
$$
\frac{(1-\|L^TE\|)^2}{1+\max_i \|(L^TE)_{:,i}\|^2} \le \lambda \le (1+\|L^T E D_{\phi}^{-\frac 1 2}\|)^2.
$$
\end{proposition}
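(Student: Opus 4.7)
The plan is to reduce the generalized eigenvalue problem to a singular-value statement about a well-chosen rectangular-like factor, and then exploit the special triangular structure of the error matrix $E$.

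First I would exploit the Cholesky identity ${G}^T{S}_\eta{G}=I$, equivalently $L^T G=I$, to rewrite the stiffness matrix of the NOBS basis as a squared factor. Since $G_t=G+E$, one has
\[
S_\phi = (G+E)^T L L^T(G+E) = (I+F)^T(I+F), \qquad F:=L^T E.
\]
The eigenvalues of $S_\phi x=\lambda D_\phi x$ coincide with those of the symmetric matrix $\tilde S_\phi:=D_\phi^{-1/2}S_\phi D_\phi^{-1/2}=B^T B$ where $B:=(I+F)D_\phi^{-1/2}$, so the problem reduces to estimating the largest and smallest singular values of $B$.

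Next I would exploit structure. Because $G_t$ is obtained from $G$ by zeroing off-diagonal entries while preserving the diagonal, $E$ is strictly upper triangular. Since $L$ is lower triangular, $L^T$ is upper triangular, and a direct index check shows that the product $F=L^T E$ has vanishing diagonal. Consequently
\[
(D_\phi)_{ii}=(S_\phi)_{ii}=\|e_i+Fe_i\|^2 = 1 + 2F_{ii} + \|F_{:,i}\|^2 = 1+\|F_{:,i}\|^2 \;\ge\; 1,
\]
which gives $\|D_\phi^{-1/2}\|\le 1$. This inequality is the key algebraic fact that makes the upper bound come out cleanly.

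For the \emph{upper bound}, writing $B=D_\phi^{-1/2}+FD_\phi^{-1/2}$ and using the triangle inequality together with $\|D_\phi^{-1/2}\|\le 1$ yields
\[
\lambda_{\max}=\|B\|^2\le\bigl(1+\|FD_\phi^{-1/2}\|\bigr)^2=\bigl(1+\|L^T E D_\phi^{-1/2}\|\bigr)^2.
\]
For the \emph{lower bound}, I substitute $z:=D_\phi^{-1/2}x$, so that $Bx=(I+F)z$ and, using the column-wise identity $(D_\phi)_{ii}=1+\|F_{:,i}\|^2$,
\[
\|Bx\|\ge (1-\|F\|)\,\|z\|, \qquad \|x\|^2=\sum_i(1+\|F_{:,i}\|^2)z_i^2\le\bigl(1+\max_i\|F_{:,i}\|^2\bigr)\|z\|^2.
\]
The hypothesis $\|L^T E\|<1$ ensures the first factor is positive, and dividing gives
\[
\lambda_{\min}=\sigma_{\min}(B)^2\ge \frac{(1-\|L^T E\|)^2}{1+\max_i\|(L^T E)_{:,i}\|^2}.
\]

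The main obstacle I anticipate is purely structural rather than computational: one must observe that the triangular nature of $E$ forces $F=L^T E$ to have zero diagonal, so that $(D_\phi)_{ii}\ge 1$. Without that observation the upper bound would carry an extra factor of $\|D_\phi^{-1/2}\|$, spoiling the clean form of the statement. Once this is in place, the rest is a careful change of variables to $z=D_\phi^{-1/2}x$ paired with standard triangle/reverse-triangle inequalities applied to $B=(I+F)D_\phi^{-1/2}$.
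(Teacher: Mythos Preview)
Your proof is correct and follows essentially the same route as the paper's: both factor $S_\phi=(I+F)^T(I+F)$ with $F=L^TE$ and bound the extreme eigenvalues of $D_\phi^{-1/2}S_\phi D_\phi^{-1/2}$ via the singular values of $(I+F)D_\phi^{-1/2}$, using the triangle and reverse-triangle inequalities. Your explicit verification that $F$ has zero diagonal (whence $(D_\phi)_{ii}=1+\|F_{:,i}\|^2\ge 1$) actually fills a step the paper leaves implicit, since both the inequality $x^TD_\phi x\le(1+\max_i\|F_{:,i}\|^2)\,x^Tx$ and the bound $\|D_\phi^{-1/2}\|\le 1$ used in the upper estimate depend on it.
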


\begin{proof}
We recall that $S_{\eta}=LL^T$ and  $G_t=G+E$ from which it follows
$$S_{\phi}= (G^T+E^T) S_{\eta} (G+E) = (I+L^TE)^T (I+L^TE).$$
For $x\ne 0$, we write
$S_\phi x = \lambda  D_{\phi} x$. Multiplying by  $x^T$ we get
$x^T S_\phi x = \lambda  x^T D_{\phi} x$ with
\begin{equation}\label{aux:0}
x^T D_{\phi} x  \le (1+\max_i \|(L^T E)_{:,i}\|^2) x^Tx
\end{equation}
and, denoting by $\sigma_{\min}(A)$ the smallest singular value of $A$,
\begin{equation}\label{aux:1}
x^T S_\phi x \ge  (\sigma_{\min}( (I+L^TE)) )^2 \ge (1-\|L^T E\|)^2 .
\end{equation}
This gives the lower bound. In order to prove the upper bound we proceed as follows. For $y^T y = 1$, we have
$$
y^T D_{\phi}^{-\frac 1 2} S_\phi  D_{\phi}^{-\frac 1 2}y \le  
(\sigma_{\max}( (I+L^TE)D_{\phi}^{-\frac 1 2}) )^2  \le (1+\|L^T ED_{\phi}^{-\frac 1 2}\|)^2 .
$$

\end{proof}

\begin{corollary}\label{cor:bound-eigs}
Assume that $\|L^TE \| < 1$. Then the eigenvalues 
$\lambda$ of \eqref{eqn:maineigpb} satisfy
$$
\frac{(1-\|L^TE\|)^2}{1+\|(L^TE)\|^2} \le \lambda \le \frac{1}{(1-\|(L^TE)\|)^2}.
$$
\end{corollary}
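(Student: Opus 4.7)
The plan is to derive both bounds of the corollary directly from Proposition \ref{prop:eig-bound}, by replacing the quantities $\max_i \|(L^T E)_{:,i}\|$ and $\|L^T E D_\phi^{-1/2}\|$ with suitable estimates that depend only on $\|L^T E\|$.

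For the lower bound, I would simply observe that for any matrix $A$, the maximum column $2$-norm is bounded by the operator $2$-norm, since $\|A_{:,i}\| = \|A e_i\| \le \|A\|\, \|e_i\| = \|A\|$. Applying this with $A = L^T E$ yields $\max_i \|(L^T E)_{:,i}\|^2 \le \|L^T E\|^2$, and plugging into the denominator of the lower bound from Proposition \ref{prop:eig-bound} gives the claimed inequality.

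For the upper bound, the key is to control $\|D_\phi^{-1/2}\|$ in terms of $\|L^T E\|$. First I would note that the $i$-th diagonal entry of $D_\phi$ satisfies
\begin{equation*}
(D_\phi)_{ii} = \bigl((I+L^T E)^T (I+L^T E)\bigr)_{ii} = \|(I+L^T E) e_i\|^2 .
\end{equation*}
By the reverse triangle inequality and submultiplicativity,
\begin{equation*}
\|(I+L^T E) e_i\| \ge 1 - \|L^T E\, e_i\| \ge 1 - \|L^T E\|,
\end{equation*}
which is positive by the standing hypothesis $\|L^T E\| < 1$. Hence $(D_\phi)_{ii} \ge (1-\|L^T E\|)^2$ for every $i$, so $\|D_\phi^{-1/2}\| \le (1-\|L^T E\|)^{-1}$. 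Then
\begin{equation*}
\|L^T E\, D_\phi^{-1/2}\| \le \|L^T E\|\, \|D_\phi^{-1/2}\| \le \frac{\|L^T E\|}{1-\|L^T E\|},
\end{equation*}
and inserting this into the upper bound of Proposition \ref{prop:eig-bound} yields
\begin{equation*}
(1+\|L^T E\, D_\phi^{-1/2}\|)^2 \le \left(1+\frac{\|L^T E\|}{1-\|L^T E\|}\right)^2 = \frac{1}{(1-\|L^T E\|)^2},
\end{equation*}
which is the desired upper bound.

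Nothing here is particularly subtle; the only point requiring minimal care is checking that the reverse triangle inequality is applicable, which is exactly guaranteed by the hypothesis $\|L^T E\| < 1$ that makes $1 - \|L^T E\| > 0$. The passage from $\|L^T E D_\phi^{-1/2}\|$ to the clean bound $\|L^T E\|/(1-\|L^T E\|)$ is the only place where the assumption is really used, and it is the main (and easy) ingredient of the proof.
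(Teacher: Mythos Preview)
Your proof is correct and follows essentially the same route as the paper. The only cosmetic difference is that the paper obtains $(D_\phi)_{ii}\ge (1-\|L^TE\|)^2$ by specializing the singular-value inequality \eqref{aux:1} to $x=e_i$, whereas you compute $(D_\phi)_{ii}=\|(I+L^TE)e_i\|^2$ directly and apply the reverse triangle inequality; the resulting bound on $\|L^TE\,D_\phi^{-1/2}\|$ and the conclusion are identical.
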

\begin{proof}
Observing that $\|(L^T E)_{:,i}\|= \|L^T E e_i\|\leq \| L^TE \|$, being $e_i$ the $i$-th element of the canonical basis, yields the lower bound. On the other hand, taking $x=e_i$ in \eqref{aux:1}, gives
$$ \|L^T ED^{-\frac 1 2}\| \leq \frac{\|L^T E\|}{ 1- \|L^TE\|}$$
from which the upper bound easily follows.

\end{proof}

\begin{remark}
If $\sigma_{\min}(D_{\phi}^{-\frac 1 2}) \ge  \|L^TED_{\phi}^{-\frac 1 2}\|$, 
the lower bound can be sharpened as follows 
$$
\lambda \ge (\sigma_{\min}( (I+L^T E)D_{\phi}^{-\frac 1 2}))^2 \ge
(\sigma_{\min}(D_{\phi}^{-\frac 1 2}) -  \|L^TED^{-\frac 1 2}\|)^2 
$$
with $\sigma_{\min}(D_{\phi}^{-\frac 1 2}) = \min_i (D_{\phi}^{-\frac 1 2})_{ii}$.
\end{remark}

\begin{remark}
{\rm
It is easy to prove that $L^TE$ is a banded matrix whose bandwidth  
depends on the bandwidth of $L^T$ and on the bandwidth $\ell$ of $G_t$.
In particular, for $i\ne j$ we have
$$
0 = (L^T G)_{ij} = (L^TG_t + L^T E)_{ij} = 
(L^TG_t)_{ij} + (L^T E)_{ij},
$$
or, equivalently,  $|(L^T E)_{ij}|=|(L^TG_t)_{ij}|$.
Hence, if  $i,j$ are such that  $(L^TG_t)_{ij}$ is nonzero away from the nonzero band of $L^TG_t$, 
then the corresponding element of 
$L^TE$ is zero as well. Moreover, one can prove that for $j-\ell < i$  we have 
$(L^TE)_{ij}=0$. 
}
\end{remark}

\subsection{Compressing the sections $G^{p}$}

Hereafter, we indicate how to efficiently build compressed versions $G^{p}_t$ of the sections $G^{p}$ of the matrix $G$ for increasing values of the maximal polynomial degree $p$. 
One of the crucial quantities in the subsequent discussion will be the {\em compression ratio}
$$
r=\frac{{\sf nnz}(G_t^{p})}{{\sf nnz}(G^{p})} \;,
$$
where ${\sf nnz}(A)$ denotes the number of non-zero elements of the matrix $A$.

Proposition \ref{prop:eig-bound} suggests to build $G^{p}_t$ in such a way that  
\begin{equation}\label{eq:LTE-tol}
\| L^TE \| \leq tol_G 
\end{equation} 
is fulfilled, once a tolerance $tol_G <1$ has been fixed. This 
rigorously guarantees the achievement of our target, namely that all eigenvalues of (\ref{eqn:maineigpb}) are bounded with their reciprocals independently of
the polynomial degree. 
Note that both $L^T$ and $E$ are infinite dimensional matrices; however, the elements of $E$ are certainly zero out of a finite-dimensional section, since we
modify $G$ only within a section $G^{p}$. As a consequence, the quantity $\|L^TE\|$ is indeed computable, by considering the corresponding finite dimensional section of $L^T$.

In order to fulfill (\ref{eq:LTE-tol}), the decay estimate (\ref{eq:benzi-tuma}) suggests to proceed ``diagonal-wise", namely to build $G^{p}_t$ from $G^{p}$ by initially retaining its main diagonal and subsequently adding the $\ell$-th diagonal, for $\ell=1,2,\ldots$ until condition  \eqref{eq:LTE-tol} is satisfied. 
Figure \ref{fig:S1-0.5}, obtained with the choice $tol_G=0.5$, illustrates the typical output of this strategy: the number of activated diagonals (left) and the compression ratio (right) are reported as functions of the polynomial degree $p$.  A close inspection reveals that both quantities stabilize around constant values. This implies that the number of nonzero entries of $G_t^{p}$ needed to ensure \eqref{eq:LTE-tol} by this strategy grows significantly with $p$; note in particular the large
value of $r$ (only slightly less that $50\%$), a clear indication of the low efficiency of the procedure. 

\begin{figure}[t!]
\begin{center}
\includegraphics[width=.50\textwidth]{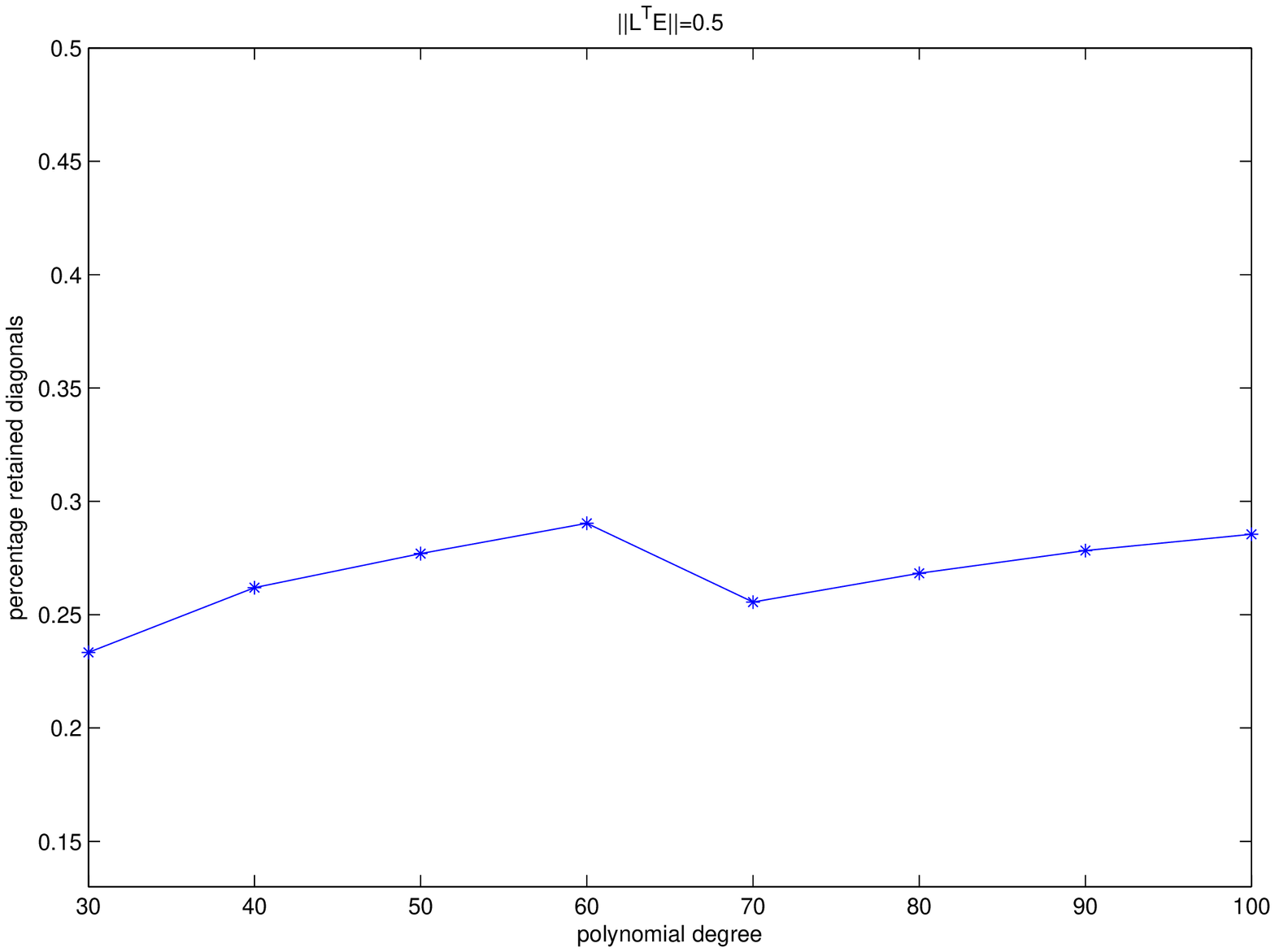}
\includegraphics[width=.49\textwidth]{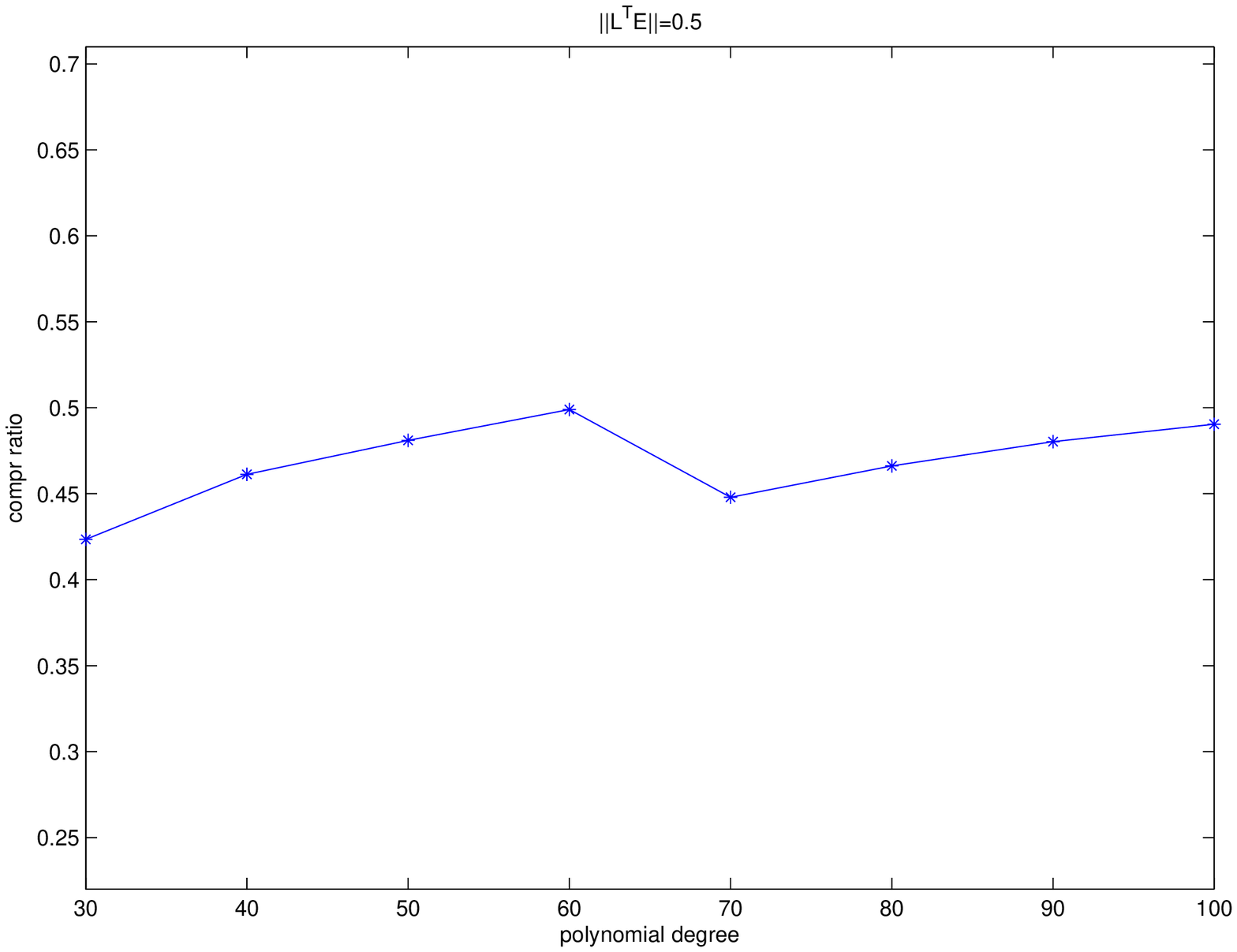}
\end{center}
\caption{Building $G^{p}_t$ by adding subsequent diagonals of $G^{p}$:
percentage of retained diagonals of $G^{p}$ (left) and compression ratio $r$ of  $G^p_t$ (right), versus the polynomial degree}
\label{fig:S1-0.5}
\end{figure}

\smallskip
However, Proposition \ref{prop:G-decay} comes in our help, as it indicates that a more sophisticated compression strategy should be applied, than simply neglecting the farthest diagonals from the main diagonal. Thus, we are led to compressing $G^p$ via a thresholding procedure; precisely, recalling formula \eqref{eq:defOBS},
the section $G_t^{p}$  is obtained by neglecting those entries of $G^{p}$ for which
\begin{equation}\label{eq:treshOBS.1}
\frac{|g_{km}|}{g_{kk}} < t\;,
\end{equation}
where $t \in (0,1)$ is the thresholding parameter. The value of $t$ is implicitly defined by the condition that (\ref{eq:LTE-tol}) be satisfied with $\| L^TE \|$ as close 
as possible to $tol_G$. A simple bisection procedure allows one to identify such a nearly-optimal value of $t$. 

Figures \ref{fig:S2-bisection-A} and \ref{fig:S2-bisection-B}, again obtained with $tol_G=0.5$, illustrate typical outputs of this thresholding strategy.
In particular, Fig.  \ref{fig:S2-bisection-A} (left) shows that the thresholding parameter $t$, identified by the bisection procedure, decays as the polynomial degree increases. This behavior seems unavoidable in order to reach our final target; indeed, numerical experiments (not reported here) clearly indicate that if we fix the
thresholding parameter and vary $p$, not only the quantity $\| L^TE \|$ eventually becomes larger than any  fixed $tol_G<1$, but the smallest eigenvalue of the resulting stiffness matrix $S_\phi$ decays to $0$.
The observed behavior of $t$ implies that entries of $G$ that were set to 0 in $G^p_t$ for a lower value of $p$, may subsequently be included in  $G^p_t$ for higher
values of $p$. This phenomenon is well-documented in Fig.  \ref{fig:S2-bisection-A} (right); we fixed one of the ``slow decaying'' columns of $G$, precisely
column $98$ already considered in Figure \ref{fig:slow-fast} (left), and we counted the number of nonzero elements in that column of $G^p_t$: the growth with
$p$ is apparent. This indicates that there are no (upper left) sectors of $G^{p}_t$ that remain unchanged while furtherly increasing $p$; the construction of $G^{p}_t$ is ``global" and may involve all relevant columns. Note, however, that for the same range of $p$ as in Fig.  \ref{fig:S2-bisection-A} (right), we observed that 
the number of nonzero elements of the ``fast decaying'' column $105$ of $G^p_t$ remains fixed to 1. 

 Although we cannot expect the number of non-zero entries in $G^p_t$ to be proportional to the dimension of the matrix, Fig.  \ref{fig:S2-bisection-B} (left) shows
 that the compression ratio is decaying, but at a very slow rate with $p$ and, more importantly, it is more than one order of magnitude smaller than the compression rate guaranteed 
 by the ``diagonal-wise" strategy (compare with Fig. \ref{fig:S1-0.5} (right)). One example of compressed matrix $G^p_t$ produced in this manner (for $p=100$) is
 shown in Fig.  \ref{fig:S2-bisection-B} (right). In addition, the minimal and maximal eigenvalues of the resulting stiffness matrices $S_\phi=S^p_\phi$ exhibit a very 
 moderate deviation from the optimal value 1; this is documented in Fig. \ref{fig:extreme-eigenvalues}, which provides a quantitative insight of the upper and
 lower bounds guaranteed by Corollary \ref{cor:bound-eigs}.
 
 In conclusion, the thresholding strategy here discussed appears to guarantee the achievement of our target with a good efficiency for all values of the polynomial degree $p$ relevant in practical implementations. 

\begin{figure}[t!]
\begin{center}
\includegraphics[width=.49\textwidth]{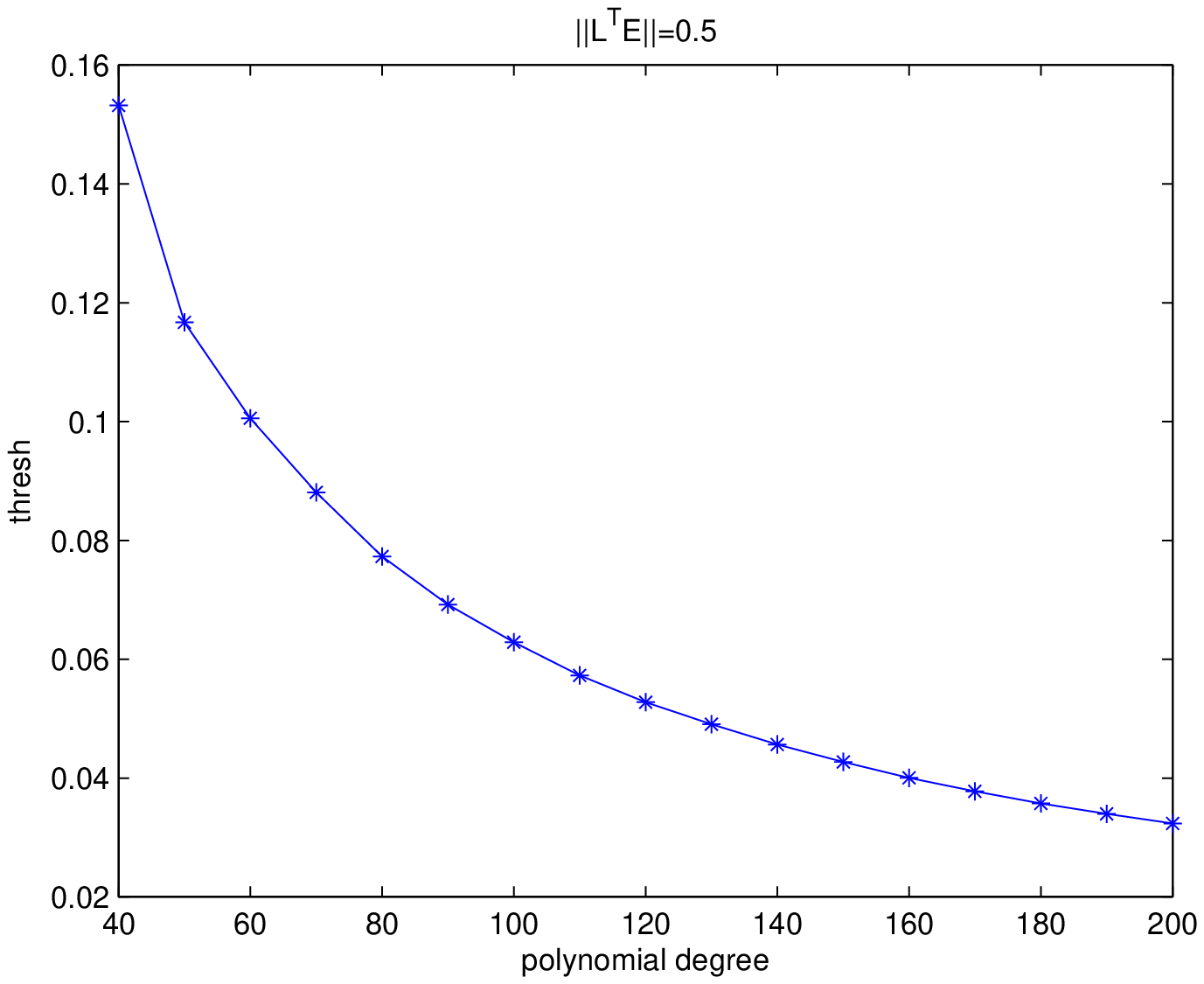}
\includegraphics[width=.50\textwidth]{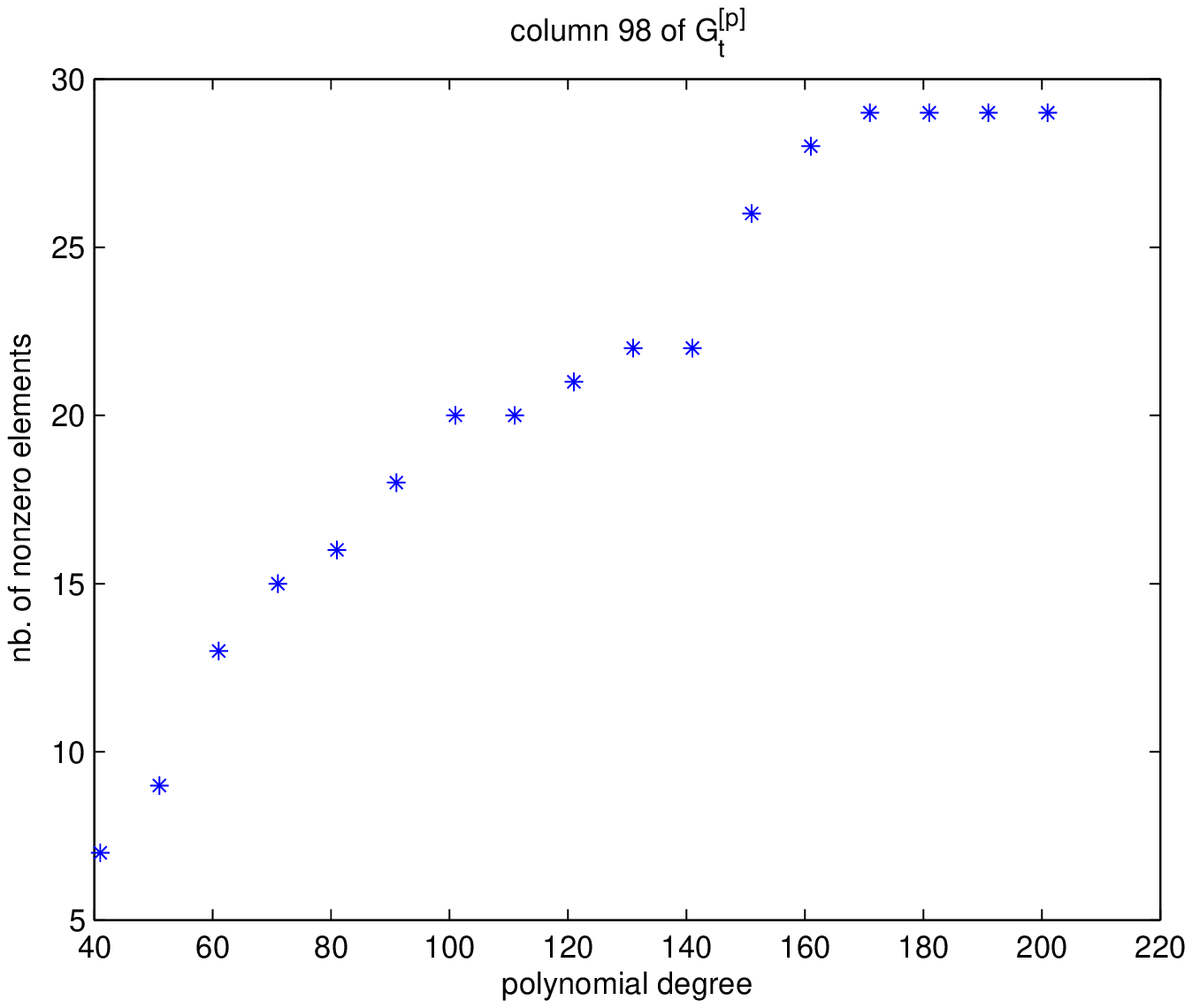} 
\end{center}
\caption{Building $G^{p}_t$ by a thresholding procedure:
thresholding parameter $t$ (left) and number of nonzero elements of  a ``slow'' decaying column of $G_t$ (right), versus the polynomial degree}
\label{fig:S2-bisection-A}
\end{figure}

\begin{figure}[t!]
\begin{center}
\includegraphics[width=.50\textwidth]{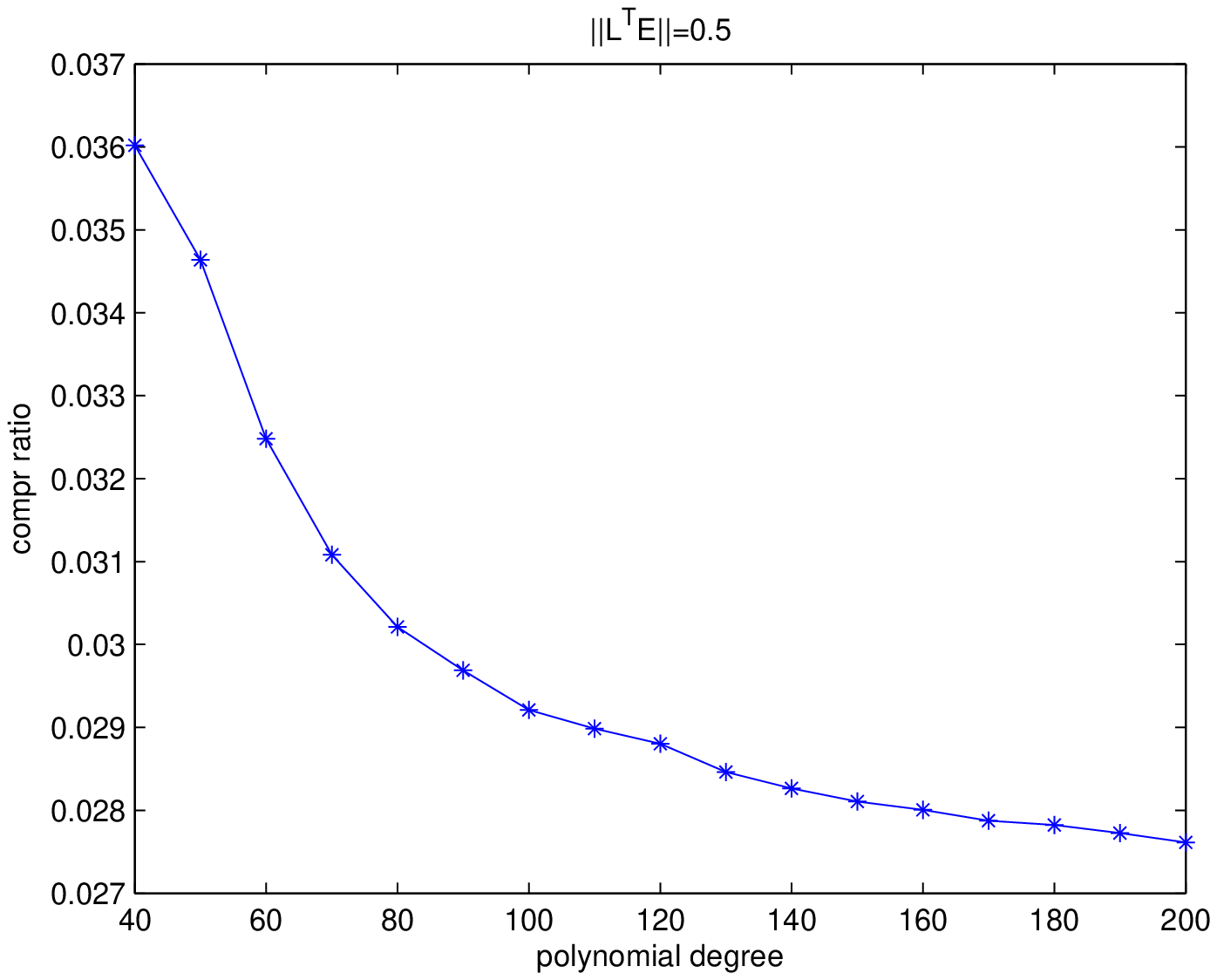}
\includegraphics[width=.34\textwidth]{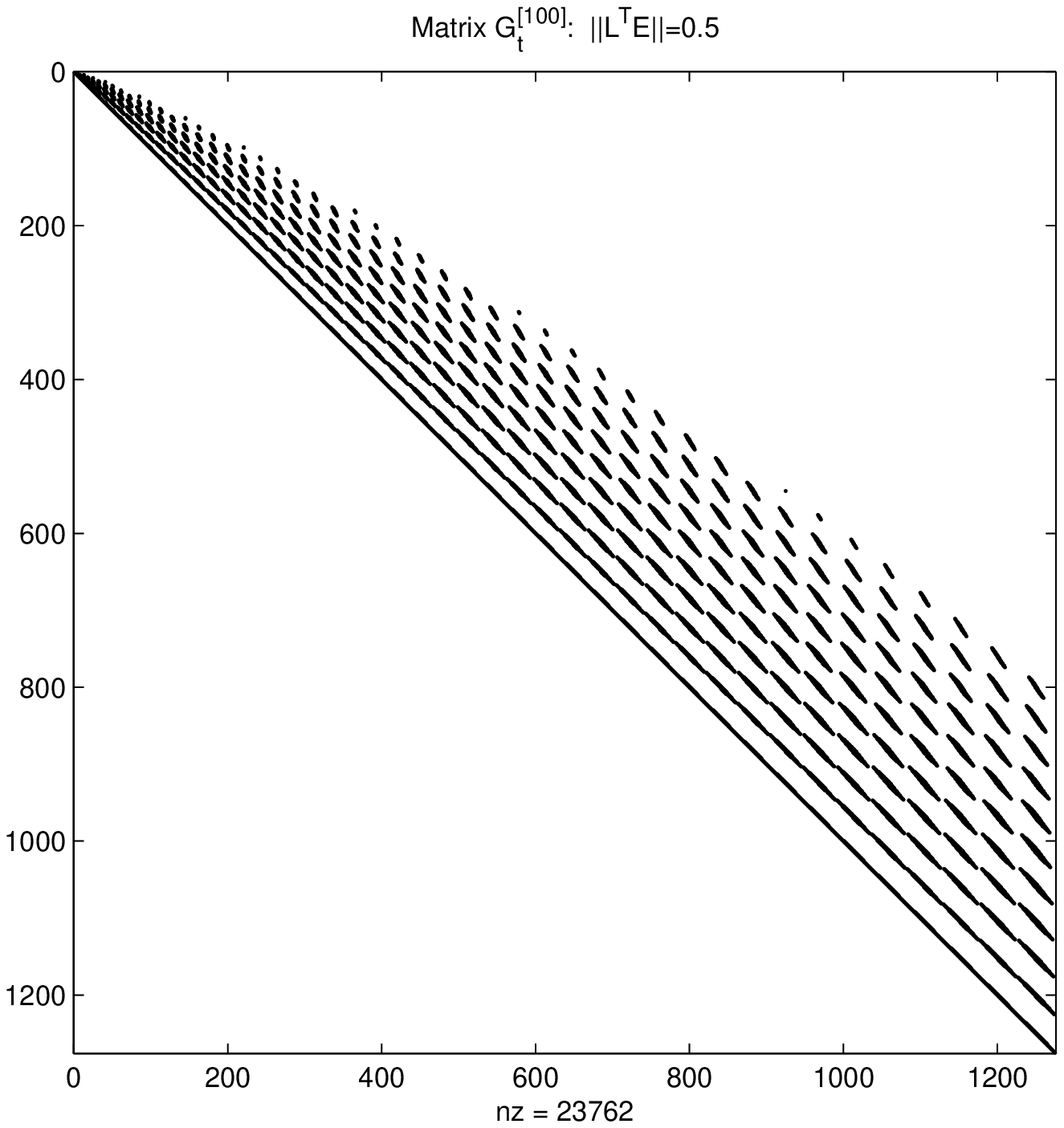}
\end{center}
\caption{Building $G^{p}_t$ by a thresholding procedure:
compression ratio $r$ of  $G^p_t$  versus the polynomial degree (left), sparsity pattern of the matrix $G^{100}_t$ (right)}
\label{fig:S2-bisection-B}
\end{figure}

\begin{figure}[t!]
\begin{center}
\includegraphics[width=.45\textwidth]{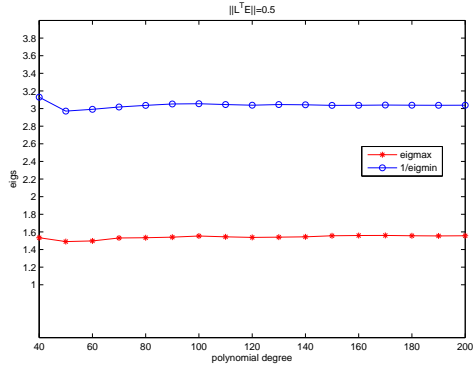}
\end{center}
\caption{Extreme eigenvalues of $S_{\phi}$  versus the maximum polynomial degree.}
\label{fig:extreme-eigenvalues}
\end{figure}

\subsection{Norm representation}

From now on, we assume that we work in $H^1_0(\Omega)$ with a Riesz basis $\phi=\{\phi_k \, : \, k \in {\cal K}\}$ given by (\ref{eq:defNOBS}), where the matrix $G_t$ is built according to the thresholding strategy presented above, with a fixed value of tolerance $tol_G<1$ and a fixed polynomial degree $p_{\text{max}}$; 
precisely, the upper-left section $G^{p_{\text{max}}}$ of $G$ is
replaced by  $G^{p_{\text{max}}}_t$ while the rest of $G$ is unchanged.
Obviously, dealing with such a basis is computationally efficient only if the adaptive algorithm will reach the prescribed accuracy by activating only basis functions having polynomial degree $p \leq p_{\text{max}}$. We will implicitly make this assumption in the sequel.

Thus, if $v \in H^1_0(\Omega)$ admits the expansion 
$v = \sum_{k \in {\cal K}} \hat{v}_k \phi_k$ and 
$\hat{v}$ denotes the vector collecting its coefficients $\hat{v}_k$, we have
\begin{equation}\label{eq:propNOBS.3}
\Vert v \Vert_{H^1_0({\Omega})}^2  = \hat{v}^T {S}_\phi \, \hat{v} \simeq \hat{v}^T {D}_\phi \, \hat{v}
= \sum_{k \in {\cal K}} |\hat{v}_k|^2 d_k=:\Vert v \Vert_\phi^2 \;,
\end{equation}
where ${D}_\phi :=  \mathrm{diag} \, S_\phi$ is the diagonal matrix with diagonal elements  
$d_k=(S_\phi)_{k,k}$. 
Correspondingly, any element $f \in H^{-1}(\Omega)$ can be expanded along the 
{\sl dual nearly-orthonormal Babu\v ska-Shen basis} $\phi^*=\{\phi_k^*\}$ as 
$f = \sum_{k \in {\cal K}} \hat{f}_k \phi_k^*$, with $\hat{f}_k = \langle f,\phi_k \rangle$,
yielding the dual norm representation
\begin{equation}\label{eq:propNOBS.4}
\Vert f \Vert_{H^{-1}(\Omega)}^2  \ \simeq \  \sum_{k \in {\cal K}} |\hat{f}_k|^2 d_k^{-1}=:\Vert v \Vert_{\phi*}^2\;.
\end{equation}
Note that each coefficient $\hat{f}_k$ can be efficiently computed from the values of $f$ on the elements of the
standard BS basis, via (\ref{eq:defNOBS}):
\begin{equation}\label{eq:propNOBS.5}
\hat{f}_k = \sum_{m \in \mathcal{M}_t(k)} g_{km} \langle f,\eta_m \rangle\;.
\end{equation}
\medskip
\noindent {\bf Notation.} 
For future references, we introduce the constants of the norm equivalence in (\ref{eq:propNOBS.3}), i.e., we assume that the constants
$\beta_*$ and $\beta^*$ are such that
\begin{equation}\label{eq:propNOBS.7}
\beta_* \Vert v \Vert_{H^1_0(\Omega)} \leq \Vert v \Vert_\phi \leq \beta^* \Vert v \Vert_{H^1_0(\Omega)}
\qquad \forall v \in H^1_0(\Omega) \;,
\end{equation}
which implies
\begin{equation}\label{eq:propNOBS.8}
\frac1{\beta^*} \Vert f \Vert_{H^{-1}(\Omega)} \leq \Vert f \Vert_{\phi^*} \leq 
\frac1{\beta_*} \Vert f \Vert_{H^{-1}(\Omega)}
\qquad \forall f \in H^{-1}(\Omega) \;.
\end{equation}

Moreover, the $\phi$-norm of any $v\in H^1_0(\Omega)$ is equivalent to the $\ell^2$-norm of the vector $\hat{v}$ of its coefficients, i.e., there exist two constants $0<d_*<d^*$ such that 
\begin{equation}\label{eq:equiv3}
d_*\| \hat{v}\|^2_{\ell^2} \leq \| {v}\|^2_{\phi} \leq d^* \| \hat{v}\|^2_{\ell^2} \qquad \forall v \in H^1_0(\Omega) \ .
\end{equation}
Indeed, employing \eqref{aux:0} and \eqref{aux:1}  (under our assumption $\|L^TE\| < 1$) it is immediate to prove that it holds 
$$d_*:=(1-\|L^TE\|)^2\leq d_k\leq 1+ \|L^TE\|^2=:d^*\qquad  \forall k\in \mathcal{K}. $$

\section{The differential problem and its algebraic representation}\label{sec:gen}

We now consider the elliptic problem
\begin{equation}\label{eq:four03}
\begin{cases} 
{\mathcal L}u=-\nabla \cdot (\nu \nabla u)+ \sigma u = f & \text{in } \Omega \;, \\
u=0 &  \text{on } \partial\Omega \;, 
\end{cases} 
\end{equation}
where $\nu$ and $\sigma$ are sufficiently smooth real coefficients satisfying 
$0 < \nu_* \leq \nu(x) \leq \nu^* < \infty$ and $0 \leq \sigma(x) \leq \sigma^* < \infty$
in $\Omega$; let us set $\alpha_* = \nu_*$ and $\alpha^* = \max(\nu^*, \sigma^*)$.
Assuming $f \in H^{-1}(\Omega)$, we formulate this problem variationally as
\begin{equation}\label{eq:four.1}
u \in V \ \ : \quad a(u,v)= \langle f,v \rangle \qquad \forall v \in  V \;,
\end{equation}
where $a(u,v)=\int_\Omega \nu \nabla u \cdot \nabla v + \int_\Omega \sigma u v$. We denote by 
$\tvert v \tvert = \sqrt{a(v,v)}$
the energy norm of any $v \in V$, which satisfies 
\begin{equation}\label{eq:four.1bis}
\sqrt{\alpha_*}  \Vert v \Vert_{H^1_0(\Omega)}  \leq \tvert v \tvert \leq 
\sqrt{\alpha^*}  \Vert v \Vert_{H^1_0(\Omega)}  \;.
\end{equation}


Let us identify the solution $u = \sum_k \hat{u}_k \phi_k$ of Problem (\ref{eq:four.1})
with the vector ${u}=(\hat{u}_k)_{k\in {\cal K} }$ 
of its nearly-orthonormal Babu\v ska-Shen (NOBS) coefficients. Similarly, let us identify
the right-hand side $f$ with the vector ${f}=(\hat{f}_\ell)_{\ell \in {\cal K}}$ of its dual NOBS coefficients.
Finally, let us introduce the semi-infinite, symmetric and positive-definite stiffness matrix 
\begin{equation}\label{eq:four100}
{A_\phi}=(a^\phi_{\ell k})_{\ell,k \in {\cal K}} \qquad \text{with} \qquad 
a^\phi_{\ell k}= a(\phi_k,\phi_\ell)\;.
\end{equation}
Then, Problem (\ref{eq:four.1}) can be equivalently written as
\begin{equation}\label{eq:four110}
{A_\phi} {u} = {f} \;,
\end{equation}
where, thanks to the previous assumptions and the norm equivalences \eqref{eq:propNOBS.7}-\eqref{eq:equiv3}, ${A}_\phi$ defines a bounded invertible operator 
in  {$\ell^2({\cal{K}})$}. 

The rest of this section will be devoted to prove that if the operator coefficients  $\nu$ and $\sigma$ are real analytic  in a neighborhood of $\bar \Omega= [0,1]^2\subset\mathbb{C}\times \mathbb{C}$ (which implies that the rate of decay of their Legendre coefficients is exponential), then the stiffness matrix $A_\phi$ belongs to a certain exponential class, i.e., its entries exponentially decay away from the diagonal. This property will be crucial in studying the optimality properties of the subsequent adaptive algorithm. After introducing the classes of  exponentially decaying matrices and some useful properties related to them, we will obtain the claimed result by using the relation
\begin{equation}\label{relation-matrix}
A_\phi=G_t^T A_\eta G_t
\end{equation}
together with suitable exponential decay properties of the factors $A_\eta$ and $G_t$.   

\begin{definition}[{Regularity classes for $A$}]\label{def:class.matrix}
A matrix ${A}$ is said to belong to the {exponential} class 
${\mathcal D}_e(\gamma)$ if there exists a constant $c_\gamma>0$ 
such that its elements satisfy
\begin{equation}\label{eq:four170}
| a_{mn} | \leq  c_\gamma e^{-\gamma\| m - n \|_{\ell^{1}}}\;  \qquad m, n  \in{\cal K} \;.
\end{equation}
\end{definition}
\begin{property}[{Inverse of $A$}] \label{prop:inverse.matrix-estimate}
If ${A} \in {\mathcal D}_e(\gamma)$ and it is invertible then 
${ A}^{-1}\in{\mathcal D}_e(\bar{\gamma})$, for some
$\bar{\gamma} \in (0,\gamma]$ 
\end{property}
\begin{proof}
See \cite[Proposition 2]{Jaffard:1990}.
\end{proof}

\noindent For any integer $J \geq 0$, let ${A}_J$ denote {the following
symmetric truncation of the matrix ${A}$}
\begin{equation}\label{eq:trunc-matr}
({A}_J)_{\ell k}=
\begin{cases}
a_{\ell k} & \text{if } \|\ell-k\|_{\ell^{1}} \leq J \;, \\
0 & \text{elsewhere.}
\end{cases}
\end{equation}
Then, we have the following well-known results (see, e.g., \cite[Property 2.4]{CNV:mathcomp}).
\begin{property}[Truncation]\label{prop:matrix-estimate}
If ${A} \in {\mathcal D}_e(\gamma)$ then there exists a constant 
$C_{{A}} $ such that 
\[
\Vert {A}-{{A}}_J \Vert \leq
\psi_{{A}}(J,\gamma):=C_{{A}} {\rm e}^{-\gamma J}
\]
{for all $J\ge0$.}
Consequently, under the assumptions of Property \ref{prop:inverse.matrix-estimate},
one has
\begin{equation}\label{eq:trunc-invmatr-err}
\Vert {A}^{-1}-({A}^{-1})_J \Vert \leq 
\psi_{{A}^{-1}} (J,\bar{\gamma})
\end{equation}
where we {let $\bar\gamma$ be defined in Property \ref{prop:inverse.matrix-estimate}.}
\end{property}

We now state the basic assumption on the coefficients of the operator $\mathcal{L}$.
\begin{assumption}\label{ass:coeff}
Let $\nu(x)=\sum_{k\in{\cal K}} \nu_k L_k(x)$ and $\sigma(x)=\sum_{k\in{\cal K}} \sigma_k L_k(x)$, resp.,  be the multidimensional Legendre expansions of the operator coefficients $\nu$ and $\sigma$, resp. (with $L_k(x):=L_{k_1}(x_1)L_{k_2}(x_2)$). There exist  $\gamma>0$ and a positive constant $C_\gamma$ only depending on $\gamma$ such that 
$$
\vert \nu_k \vert , \  \vert \sigma_k \vert  \leq C_\gamma e^{-\gamma \|k\|_{\ell^{1}}}\qquad \forall k\in{\cal K}.$$
\end{assumption}

\begin{lemma}[Exponential decay of $A_\eta$]\label{lm:L}
Let $\{\eta_k\}_{k\in {\cal K}}$  be the tensorized BS basis functions and ${A_\eta}=(a^\eta_{\ell k})_{\ell,k \in {\cal K}}$ with $a^\eta_{\ell k}= a(\eta_k,\eta_\ell)$ be the stiffness matrix associated to the operator ${\mathcal L}$. Under Assumption \ref{ass:coeff}, it holds
\begin{equation}
 \vert a^\eta_{mn} \vert \leq C e^{-\gamma \|n-m\|_{\ell^{1}}} \qquad \forall n,m\in{\cal K}\ ,
\end{equation}
where $C$ is a constant only depending on $\eta$.

\end{lemma}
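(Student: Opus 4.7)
The plan is to reduce the computation of each entry $a^\eta_{mn}$ to a double sum over the multidimensional Legendre expansions of the coefficients $\nu,\sigma$, then to exploit the tensor product structure of the basis together with classical Legendre orthogonality relations to bound the resulting 1D integrals.

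First, I would insert the Legendre expansions of $\nu$ and $\sigma$ from Assumption~\ref{ass:coeff} into the definition of $a^\eta_{mn}$, yielding
$$
a^\eta_{mn}=\sum_{r\in\mathcal K}\Bigl(\nu_r\,T^{\nabla}_r(n,m)+\sigma_r\,T^{0}_r(n,m)\Bigr),
$$
where $T^{\nabla}_r(n,m)=\int_\Omega L_r\,\nabla\eta_n\!\cdot\!\nabla\eta_m$ and $T^{0}_r(n,m)=\int_\Omega L_r\,\eta_n\eta_m$. Because $L_r$, $\eta_n$, $\eta_m$ are tensor products, each $T$-integral factorizes into a product of two 1D integrals of the form $\int_I L_{r_i}\eta'_{n_i}\eta'_{m_i}\,dx_i$ or $\int_I L_{r_i}\eta_{n_i}\eta_{m_i}\,dx_i$.

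The core observation is then to apply the identities $\eta'_k=-\sqrt{(2k-1)/2}\,L_{k-1}$ and $\eta_k=(L_{k-2}-L_k)/\sqrt{4k-2}$, which reduce every 1D integral to a linear combination of Legendre triple-products $\int_I L_a L_b L_{r_i}\,dx_i$. By the standard Clebsch--Gordan--type expansion of $L_a L_b$ as a linear combination of $L_c$ with $c\in\{|a-b|,|a-b|+2,\dots,a+b\}$, and the orthogonality \eqref{eq:Leg-ort}, such a triple product vanishes unless $r_i\in[\,|n_i-m_i|-c_0,\,n_i+m_i\,]$ for a fixed small constant $c_0\in\{0,1,2\}$. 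Moreover, using $\|L_{r_i}\|_{L^\infty}\le 1$ together with Cauchy--Schwarz and $\|L_{k-1}\|_{L^2(I)}^2=2/(2k-1)$ (resp.\ the $L^2$ estimate $\|\eta_k\|_{L^2(I)}\lesssim 1/k$), these 1D integrals admit a uniform bound $|\,\cdot\,|\le C$ independent of $n_i,m_i,r_i$.

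Putting these two facts together, the contribution of $T^{\nabla}_r$ to $|a^\eta_{mn}|$ is bounded by $C^2$ times the indicator that $r_i\gtrsim|n_i-m_i|$ for $i=1,2$, and similarly for $T^{0}_r$. Since $|\nu_r|,|\sigma_r|\le C_\gamma e^{-\gamma\|r\|_{\ell^1}}$, the double sum collapses to two independent geometric tails:
$$
|a^\eta_{mn}|\le C\sum_{r_1\ge|n_1-m_1|-c_0}\sum_{r_2\ge|n_2-m_2|-c_0}e^{-\gamma(r_1+r_2)}\le C'\,e^{-\gamma\|n-m\|_{\ell^1}},
$$
which is the claimed bound. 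The main obstacle is the bookkeeping in Step~2, namely checking that for every one of the six types of 1D integrals arising in $T^{\nabla}_r$ and $T^{0}_r$ the support restriction $r_i\gtrsim|n_i-m_i|$ holds with a uniform $c_0$, and that the scalar prefactors $\sqrt{(2k-1)/2}$ and $1/\sqrt{4k-2}$ coming from $\eta_k'$ and $\eta_k$ are absorbed by the $L^2$-norms of the Legendre polynomials so as to yield a $k$-independent constant; once this is established the exponential decay of the Legendre coefficients of $\nu,\sigma$ is transferred verbatim to $A_\eta$ with the same rate $\gamma$ (up to a possible multiplicative loss in the constant).
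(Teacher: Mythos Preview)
Your argument is correct and shares the paper's overall architecture: split $a^\eta_{mn}$ into the gradient and zeroth-order parts, reduce each to Legendre triple products via the identities $\eta'_k=-\sqrt{(2k-1)/2}\,L_{k-1}$ and $\eta_k=(L_{k-2}-L_k)/\sqrt{4k-2}$, use the Adams/Clebsch--Gordan linearization to obtain the support restriction $r_i\ge|n_i-m_i|-2$, and conclude by summing the geometric tails of the Legendre coefficients of $\nu,\sigma$.

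Where you genuinely diverge from the paper is in how you control the size of each triple integral. The paper expands $L_aL_b=\sum_s A^s_{a,b}L_{a+b-2s}$ explicitly, picks out the relevant coefficient by orthogonality, and then devotes most of the Appendix to three separate asymptotic regimes for the linearization coefficients $A^s_{a,b}$ (the cases $s=0$, $0<s<\min(a,b)$, and $s=\min(a,b)$) in order to verify that the combined prefactors $B^i_{m,n}\,A^{r_1}A^{r_2}/[(2(\cdot)+1)(2(\cdot)+1)]$ and $C^n_m\,A^{r_1}A^{r_2}/[(2(\cdot)+1)(2(\cdot)+1)]$ are uniformly $\lesssim 1$. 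Your one-line bound
\[
\Bigl|\int_I L_{r_i}L_aL_b\,dx\Bigr|\le \|L_{r_i}\|_{L^\infty(I)}\,\|L_a\|_{L^2(I)}\,\|L_b\|_{L^2(I)}
\]
achieves the same uniform control without any asymptotics: for the $\eta'\eta'$ factors the prefactor $\sqrt{(2k-1)/2}$ cancels exactly against $\|L_{k-1}\|_{L^2(I)}=\sqrt{2/(2k-1)}$, and for the $\eta\eta$ factors the estimate $\|\eta_k\|_{L^2(I)}\lesssim 1/k$ more than compensates. This is a real simplification; the paper's finer analysis could at best sharpen the constant, which is immaterial here. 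Your caveat about the ``bookkeeping'' is accurate but not a gap---only the two integral types $\int L_r\eta'_{n_i}\eta'_{m_i}$ and $\int L_r\eta_{n_i}\eta_{m_i}$ actually occur, and for both the support restriction and the uniform bound go through exactly as you describe.
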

\begin{proof}
Due to the highly technical nature of the proof, we postpone it to the Appendix.
\end{proof}

The proof of the exponential decay of $G_t$ relies on the following intermediate result.

\begin{lemma}[Exponential decay of $S^{-1}_\eta$]\label{lm:invS}
Let ${S}_\eta$ be the stiffness matrix of the
tensorized Babu\v ska-Shen basis with respect to the $H^1_0(\Omega)$-inner product. Then there exists $\hat \gamma>0$ such that $S_\eta^{-1}\in {\mathcal D}_e(\hat \gamma)$.
\end{lemma}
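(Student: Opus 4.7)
My plan is to deduce the exponential decay of $S_\eta^{-1}$ from the (trivially sharp) decay of $S_\eta$ itself via Property \ref{prop:inverse.matrix-estimate}.

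First I would verify that $S_\eta \in \mathcal{D}_e(\gamma)$ for every $\gamma>0$. Combining the tensor decomposition (\ref{eq:orthogonal}) with the $H^1_0$-orthonormality (\ref{eq:propBS.1}) and the pentadiagonal mass relation (\ref{eq:propBS.2}), one sees that $(S_\eta)_{k,m}$ vanishes unless either $k_1=m_1$ with $|k_2-m_2|\in\{0,2\}$ or $k_2=m_2$ with $|k_1-m_1|\in\{0,2\}$; equivalently, $(S_\eta)_{k,m}=0$ whenever $\|k-m\|_{\ell^1}>2$, and the finitely many non-vanishing entries are uniformly bounded in $k$. Hence $|(S_\eta)_{k,m}|\le c_\gamma e^{-\gamma\|k-m\|_{\ell^1}}$ holds trivially for every $\gamma>0$ by taking $c_\gamma$ sufficiently large, i.e. $S_\eta\in\mathcal{D}_e(\gamma)$ for every $\gamma>0$. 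An application of Property \ref{prop:inverse.matrix-estimate} then yields some $\hat\gamma>0$ with $S_\eta^{-1}\in\mathcal{D}_e(\hat\gamma)$.

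The main obstacle is justifying the invocation of Property \ref{prop:inverse.matrix-estimate}: although each finite section $S_\eta^p$ is symmetric positive definite, the smallest eigenvalues decay like $p^{-2}$ with the polynomial degree (see the discussion preceding (\ref{eigenvalues}) and the normalization remark thereafter), so $S_\eta$ fails to be uniformly invertible on $\ell^2(\mathcal{K})$ and Jaffard's theorem cannot simply be quoted off the shelf. A route around this difficulty is to exploit the explicit Kronecker structure $S_\eta = I\otimes M + M\otimes I$ (with $M$ the 1D pentadiagonal mass matrix) emerging from (\ref{eq:orthogonal}), combined with the integral representation $S_\eta^{-1}=\int_0^\infty e^{-tM}\otimes e^{-tM}\,dt$ at the level of finite sections, and to control the integrand via decay estimates for entries of matrix exponentials of banded positive-definite operators (in the spirit of Hochbruck--Lubich and Iserles). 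Splitting the $t$-integration into short- and long-time regimes — the former producing heat-kernel-type super-exponential decay of $(e^{-tM})_{i,j}$ in the 1D index distance $|i-j|$, the latter being tamed through the uniform bound $\|e^{-tM}\|\le 1$ and integrability against a suitable weight — should furnish an exponential bound on $|(S_\eta^{-1})_{k,m}|$ in $\|k-m\|_{\ell^1}$ with constants independent of the polynomial degree, from which the conclusion follows.
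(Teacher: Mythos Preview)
Your first paragraph \emph{is} the paper's proof, essentially verbatim: the authors observe that $S_\eta$ is banded in the $\ell^1$-metric on $\mathcal{K}$ (hence trivially in $\mathcal{D}_e(\gamma)$) and then invoke Property~\ref{prop:inverse.matrix-estimate}. They do not go further.

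The subtlety you raise in your second paragraph is real and the paper does not address it. Jaffard's result behind Property~\ref{prop:inverse.matrix-estimate} is stated for operators that are bounded with bounded inverse on $\ell^2$, and the paper itself documents (just before and after \eqref{eigenvalues}) that the smallest eigenvalue of the normalized $S_\eta$ drifts to zero like $p^{-2}$; so the hypothesis is not met in the usual sense, and the paper's one-line invocation is, strictly speaking, formal. You are right to flag this.

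Your proposed workaround via the Kronecker structure and the Laplace-type representation $S_\eta^{-1}=\int_0^\infty e^{-tM}\otimes e^{-tM}\,dt$ is a natural alternative, and it is precisely the route taken in the paper's own reference~\cite{CSV:13}. Two cautions, however. First, that analysis (quoted here as Proposition~\ref{prop:decay1}) yields only the power-type bound $|(S_\eta^{-1})_{\alpha\beta}|\lesssim \mathfrak n^{-1/2}$, not an exponential one; so your ``long-time regime'' argument, as sketched, would have to do strictly better than what \cite{CSV:13} achieves, and the vague ``integrability against a suitable weight'' is exactly where the difficulty sits, since $\inf\sigma(M)=0$ makes $\|e^{-tM}\|=1$ for all $t$ and provides no decay to integrate against. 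Second, even if you salvage an exponential bound on finite sections, the rate you extract will typically depend on $\kappa(S_\eta^p)$---compare \eqref{eq:benzi-tuma}---and hence degenerate with $p$; getting a \emph{uniform} $\hat\gamma>0$ from this route would need a genuinely new ingredient. In short: your diagnosis of the gap is sharper than the paper's proof, but your proposed cure is not yet a proof either.
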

\begin{proof}
We preliminary observe that, thanks to \eqref{eq:orthogonal}$, S_\eta$ is a banded matrix with half bandwidth equal to $2$, once we endow the index set ${\cal K}$ with the $\ell^{1}$-metric, i.e., $(S_\eta)_{mk}=0$ if 
$\|k-m\|_{\ell^{1}}>2$. To conclude it is sufficient to apply Property \ref{prop:inverse.matrix-estimate} to $S_\eta$ (which is banded, thus trivially with exponential decay). 
\end{proof}

\begin{lemma}[Exponential decay of $G$ and $G_t$]\label{lm:G}
Let $G$ be the semi-infinite matrix satisfying equation \eqref{eq:propOBS.1}. Then there exists $\tilde \gamma>0$ such that  
$G\in {\mathcal D}_e(\gamma)$. Moreover, let $G_t$ denote the matrix obtained from $G$ by setting to zero a certain finiite (or infinite) set of entries. Then $G_t$ belongs to the same exponential class of $G$, i.e. $G_t \in {\mathcal D}_e(\tilde \gamma)$.
\end{lemma}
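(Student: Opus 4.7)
The plan is to go through the lower-triangular Cholesky factor $L=G^{-T}$ of $S_\eta$, since \eqref{eq:propOBS.1} gives $S_\eta = LL^T$. First, I would record that $S_\eta$ has ``half bandwidth $2$'' in the $\ell^1$-metric on $\mathcal{K}$: by \eqref{eq:orthogonal} together with \eqref{eq:propBS.1}--\eqref{eq:propBS.2}, one has $(S_\eta)_{mk}=0$ whenever $\|m-k\|_{\ell^1}>2$. In particular $S_\eta\in\mathcal{D}_e(\gamma)$ for every $\gamma>0$, and Lemma~\ref{lm:invS} already yields $S_\eta^{-1}\in\mathcal{D}_e(\hat\gamma)$.

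The main step is to show that $L\in\mathcal{D}_e(\gamma_L)$ for some $\gamma_L>0$. I would argue that Jaffard's algebra $\mathcal{D}_e$ is closed under the Cholesky factorization of boundedly invertible SPD elements: since $S_\eta$ is SPD with spectrum uniformly bounded away from $0$ and $\infty$ (which follows, for every finite section $S_\eta^p$, from \eqref{eigenvalues} combined with the spectral analysis of the normalized BS stiffness/mass matrix recalled just before the ``Orthonormalization'' subsection, and then passes to the infinite-dimensional setting), its Cholesky factor inherits exponential off-diagonal decay in the $\ell^1$-metric. As a more hands-on alternative, one can unfold the Cholesky recursion
\[
l_{kk}=\Big(s_{kk}-\sum_{j<k}l_{kj}^2\Big)^{1/2},\qquad l_{mk}=\frac{1}{l_{kk}}\Big(s_{mk}-\sum_{j<k}l_{mj}l_{kj}\Big),
\]
and, using that the source terms $s_{mk}$ vanish for $\|m-k\|_{\ell^1}>2$ and that the diagonal of $L$ is uniformly bounded above and away from $0$, prove by induction along the A-ordering that $|l_{mk}|\le c\,e^{-\gamma_L\|m-k\|_{\ell^1}}$; the $\ell^1$-geometry of $\mathcal{K}$ has to replace the scalar bandwidth that appears in the Benzi--Tuma estimate \eqref{eq:benzi-tuma}, which by itself would only give a rate degenerating with the polynomial degree.

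Once $L\in\mathcal{D}_e(\gamma_L)$ is in hand, Property~\ref{prop:inverse.matrix-estimate} applied to the boundedly invertible $L$ yields $L^{-1}\in\mathcal{D}_e(\gamma')$ for some $\gamma'\in(0,\gamma_L]$; since the $\ell^1$-distance is symmetric, $\mathcal{D}_e(\gamma')$ is closed under transposition, and therefore $G=L^{-T}\in\mathcal{D}_e(\tilde\gamma)$ with $\tilde\gamma:=\gamma'$, which is the first claim.

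The second claim, namely $G_t\in\mathcal{D}_e(\tilde\gamma)$, is then immediate: by construction $G_t$ is obtained from $G$ by setting a prescribed (finite or infinite) set of entries to zero and leaving the rest unchanged, so that $|(G_t)_{mk}|\le|G_{mk}|\le c_{\tilde\gamma}\,e^{-\tilde\gamma\|m-k\|_{\ell^1}}$ pointwise on $\mathcal{K}\times\mathcal{K}$, and the exponential bound transfers with the same constants. The key difficulty in the whole argument is the Cholesky-decay step, i.e.\ transferring exponential decay from the banded SPD matrix $S_\eta$ to its Cholesky factor $L$ in the multidimensional $\ell^1$-metric, with a rate $\gamma_L$ independent of the polynomial degree.
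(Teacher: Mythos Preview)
Your route is genuinely different from the paper's. You aim to first establish $L\in\mathcal{D}_e$ and then invoke Property~\ref{prop:inverse.matrix-estimate} on $L$ to obtain $G=L^{-T}\in\mathcal{D}_e$. The paper instead exploits the identity $G=S_\eta^{-1}L$ (which follows from $S_\eta^{-1}=GG^T$ and $L=G^{-T}$) and bounds $|G_{ij}|$ directly: it feeds in the exponential decay of $S_\eta^{-1}$ already supplied by Lemma~\ref{lm:invS}, together with only two elementary facts about $L$, namely that $L$ inherits the finite $\ell^1$-band of $S_\eta$ and that the normalization $(S_\eta)_{kk}=1$ forces $|\ell_{kj}|\le 1$. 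No decay estimate for $L$ is ever required---only bandedness and an entrywise bound---so the exponential decay is carried entirely by the factor $S_\eta^{-1}$. This sidesteps exactly the step you flag as the ``key difficulty.'' Your treatment of $G_t$ (pointwise majorization by $G$) is the same as the paper's and is fine.

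There is also a factual error that creates a real gap. You claim that $S_\eta$ has spectrum uniformly bounded away from $0$, but the passage you cite states the opposite: the smallest eigenvalue of the normalized parity block decays like $p^{-2}$ (this is precisely why the tensorized BS basis fails to be a Riesz basis and why the whole quasi-orthonormalization is introduced). This undermines both of your proposed arguments for $L\in\mathcal{D}_e$: the Cholesky recursion cannot rely on a uniform lower bound for $\ell_{kk}$, and the abstract ``Jaffard algebra closed under Cholesky'' results need a spectral gap. More critically, even if you grant $L\in\mathcal{D}_e$, your final step---applying Property~\ref{prop:inverse.matrix-estimate} to $L$---requires $L$ to be boundedly invertible, and it is not: $\|L^{-1}\|^2=\|G\|^2=\|S_\eta^{-1}\|$, which is unbounded by the same spectral fact. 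The paper's route avoids this particular obstruction because it never inverts $L$; it only multiplies by the banded, entrywise-bounded $L$ and places all the inversion on $S_\eta$, where Lemma~\ref{lm:invS} is taken as input.
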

\begin{proof}
We proceed extending the idea of \cite[Theorem 4.1] {Benzi-Tuma:2000} to the present infinite-dimensional setting (see also \cite{Hall-Jin:2010,Strohmer-et-al:2013} for similar results). Recall that $S_\eta$ has been normalized so that $(S_\eta)_{i,i,}=1$ for all $i\in{\cal K}$. Here $S_\eta=L L^T$ is the Cholesky factorization of $S_\eta$ (with $L$ lower triangular infinite dimensional matrix). Using the fact that $S_\eta$ is a banded matrix (with unitary band) once the index set ${\cal K}$ is endowed with the $\ell^{1}$ metric and noticing that it holds $G=L^{-T}$ we get \begin{equation}
G_{i j}=\sum_{k=j}^{j+1} (S^{-1}_{\eta})_{i k} \ell_{k j} \qquad i,j\in {\cal K}. 
\end{equation}
Employing the exponential decay of $S_{\eta}^{-1}$ (see Lemma \ref{lm:invS}) and recalling that the normalization of $S_\eta$ implies $\vert \ell_{i j}\vert \leq 1$ for all $i,j\in {\cal K}$, we obtain  
\begin{eqnarray}
|G_{i j}| &\le& \sum_{k=j}^{j+1} \vert (S^{-1}_{\eta})_{i k}\vert \vert  \ell_{k j} \vert \le C \sum_{k=j}^{j+1} e^{-\hat \gamma \| i-k\|_{\ell^{1}}}\nonumber\\
&\leq& C \sum_{q\geq \| i-j\|_{\ell^{1}}} \text{card}(K(q)) e^{-\hat \gamma q} 
\end{eqnarray}
where $K(q):=\{ k \in [j,j+1]:\quad \|i-k\|_{\ell^{1}}=q\}\subset {\cal K}$. Note that as the index subset $K(q)$ inherits the ordering of ${\cal K}$ (see Fig.~\ref{Fig:ordinamenti-A}), the notion of interval $[j,j+1]$ entering into the definition of $K(q)$ has to be intended consistently with this ordering. It is immediate to verify that it holds $ \text{card}(K(q)) \leq \tilde{C} q$ for a positive constant $\tilde{C}$ independent of $q$. Then it follows, for some $\tilde \gamma < \hat \gamma$. 
\begin{equation}
|G_{i j}| \le  C^\prime \sum_{q\geq \| i-j\|_{\ell^{1}}} e^{-\hat \gamma q}\leq C^{\prime\prime} e^{-\tilde \gamma \| i-j\|_{\ell^{1}}} \qquad i,j\in {\cal K}.
\end{equation}
The second part of the theorem is immediate.
\end{proof}

\noindent Now we are ready to establish the main result of this section.

\begin{proposition}\label{prop:decay}
Under Assumption \ref{ass:coeff} there exists $\gamma_{\cal L} \in (0,\gamma]$ such that $A_\phi\in {\mathcal D}_e(\gamma_{\cal L})$.
\end{proposition}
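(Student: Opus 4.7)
The plan is to deduce the claim directly from the factorization \eqref{relation-matrix}, namely $A_\phi = G_t^T A_\eta G_t$, together with the two already-established decay results: Lemma \ref{lm:L} (which, under Assumption \ref{ass:coeff}, gives $A_\eta \in {\mathcal D}_e(\gamma)$) and Lemma \ref{lm:G} (which gives $G_t \in {\mathcal D}_e(\tilde\gamma)$, and hence, by symmetry of the $\ell^{1}$-distance, $G_t^T \in {\mathcal D}_e(\tilde\gamma)$ as well). The only missing piece is the closure of the exponential class ${\mathcal D}_e(\cdot)$ under matrix multiplication, which I would prove as an auxiliary lemma before invoking it twice.

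More precisely, I would show: if $A \in {\mathcal D}_e(\gamma_A)$ and $B \in {\mathcal D}_e(\gamma_B)$, then for every $\gamma'$ with $0 < \gamma' < \min(\gamma_A, \gamma_B)$ one has $AB \in {\mathcal D}_e(\gamma')$. The argument starts from
\begin{equation*}
|(AB)_{mn}| \leq \sum_{k \in {\cal K}} |a_{mk}|\,|b_{kn}| \leq c_{\gamma_A} c_{\gamma_B} \sum_{k \in {\cal K}} e^{-\gamma_A \|m-k\|_{\ell^1}}\, e^{-\gamma_B \|k-n\|_{\ell^1}}.
\end{equation*}
The standard trick is to split each rate as $\gamma_A = \gamma' + (\gamma_A - \gamma')$, $\gamma_B = \gamma' + (\gamma_B - \gamma')$ and to exploit the triangle inequality $\|m-k\|_{\ell^1} + \|k-n\|_{\ell^1} \geq \|m-n\|_{\ell^1}$, which pulls the factor $e^{-\gamma'\|m-n\|_{\ell^1}}$ out of the sum and leaves
\begin{equation*}
|(AB)_{mn}| \leq c_{\gamma_A} c_{\gamma_B}\, e^{-\gamma' \|m-n\|_{\ell^1}} \sum_{k \in {\cal K}} e^{-(\gamma_A - \gamma')\|m-k\|_{\ell^1}}\, e^{-(\gamma_B - \gamma')\|k-n\|_{\ell^1}}.
\end{equation*}
Since $\gamma_A - \gamma' > 0$ and the index set sits inside $\mathbb{N}^2$, the residual sum is bounded uniformly in $m, n$ by the convergent geometric series $\sum_{k \in \mathbb{N}^2} e^{-(\gamma_A-\gamma')\|m-k\|_{\ell^1}}$ (which factorizes in the two coordinates), so we obtain an exponential bound in $\|m-n\|_{\ell^1}$ with rate $\gamma'$ and a new constant independent of $m,n$.

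Applying this lemma first to the product $A_\eta G_t$, with rates $\gamma$ and $\tilde\gamma$, produces a matrix in ${\mathcal D}_e(\gamma^\sharp)$ for any $\gamma^\sharp < \min(\gamma, \tilde\gamma)$; applying it a second time to multiply on the left by $G_t^T \in {\mathcal D}_e(\tilde\gamma)$ yields $A_\phi \in {\mathcal D}_e(\gamma_{\cal L})$ for any positive $\gamma_{\cal L}$ strictly smaller than $\min(\gamma, \tilde\gamma)$, so in particular with $\gamma_{\cal L} \in (0, \gamma]$, as required. I do not foresee a genuine obstacle: the proof is essentially an algebra-of-decay argument and the only subtlety is the (harmless) need to shrink the rate by an arbitrarily small amount at each multiplication, which is compensated by the freedom in the choice of $\gamma_{\cal L}$ in the statement. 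An alternative, more abstract route would be to appeal to the fact that the Jaffard classes ${\mathcal D}_e(\cdot)$ form a Banach algebra (in the spirit of \cite{Jaffard:1990}), but the direct computation above is shorter and self-contained given the preceding lemmas.
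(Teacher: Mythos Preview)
Your proof is correct and follows exactly the paper's approach: the authors invoke the factorization $A_\phi=G_t^T A_\eta G_t$ together with Lemmas \ref{lm:L} and \ref{lm:G}, and then appeal to \cite[Proposition 1]{Jaffard:1990} for the closure of the exponential class under products, which is precisely the multiplication lemma you spell out by hand. Your explicit ``algebra-of-decay'' computation is just a self-contained version of Jaffard's cited result, so there is no substantive difference.
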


\begin{proof}
Recall the expression of $A_\phi$ given in \eqref{relation-matrix}. 
Employing \cite[Proposition 1]{Jaffard:1990}  together with Lemmas \ref{lm:G} and \ref{lm:L} we deduce that the product $A_\phi=G_t^T A_\eta G_t$ of exponentially decaying semi-infinite matrices is exponentially decaying with a lower decay rate.
\end{proof}


\section{An adaptive algorithm with contraction properties}\label{sec:adapt-alg}
Given any finite index set $\Lambda \subset {\cal K}$, we define the subspace $V_{\Lambda} = {\rm span}\,\{\phi_k\, | \, k \in \Lambda \}$ of $V=H^1_0(\Omega)$;  we set $|\Lambda|= \rm{card}\, \Lambda$, so that $\rm{dim}\, V_{\Lambda}=|\Lambda|$. We set $\text{supp}\,  v := \Lambda$ if $v=\sum_{k\in\Lambda} \hat v_k \phi_k$
with all $ \hat v_k\not = 0$. If $v\in V$ admits the
expansion $v = \sum_{k \in {\cal K}} \hat{v}_k \phi_k $, then we define its 
projection $P_\Lambda v$ upon $V_\Lambda$ by setting
$P_\Lambda v = \sum_{k \in \Lambda} \hat{v}_k \phi_k$.
Similarly, we define the subspace $V_{\Lambda}^* = {\rm span}\,\{\phi^*_k\, | \, k \in \Lambda \}$ of $V'=H^{-1}(\Omega)$; if $v$ admits an expansion $f = \sum_{k \in {\cal K}} \hat{f}_k \phi^*_k $, then we define its  projection $P^*_\Lambda f$ upon $V^*_\Lambda$ by setting
$P^*_\Lambda f = \sum_{k \in \Lambda} \hat{f}_k \phi^*_k $.

\smallskip
Given any finite $\Lambda \subset {\cal K}$, the Galerkin approximation of \eqref{eq:four.1} is defined as
\begin{equation}\label{eq:four.2}
u_\Lambda \in V_\Lambda \ \ : \quad a(u_\Lambda,v_\Lambda)= 
\langle f,v_\Lambda \rangle \qquad \forall v_\Lambda \in V_\Lambda \;.
\end{equation}

For any $w \in V_\Lambda$, we define the residual $r(w) \in V'$ as
$$
r(w)=f-{\mathcal L}w = \sum_{k \in {\cal K}} \hat{r}_k(w) \phi^*_k \;, \qquad \text{where} \qquad 
\hat{r}_k(w) = \langle f - {\mathcal L}w, \phi_k \rangle = \langle f,\phi_k \rangle -a(w,\phi_k) \;.
$$
Then, the previous definition of $u_\Lambda$ is equivalent to the condition $P^*_\Lambda r(u_\Lambda) = 0$, i.e., $\hat{r}_k(u_\Lambda)=0$ for every $k \in \Lambda$.
By the continuity and coercivity of the bilinear form, one has 
\begin{equation}\label{eq:four.2.1}
\frac1{\alpha^*} \Vert r(u_\Lambda) \Vert_{H^{-1}(\Omega)} \leq
\Vert u - u_\Lambda \Vert_{H^1_0(\Omega)} \leq 
\frac1{\alpha_*} \Vert r(u_\Lambda) \Vert_{H^{-1}(\Omega)} \;,
\end{equation}
which in view of \eqref{eq:propNOBS.8} can be rephrased as 
\begin{equation}\label{eq:four.2.1bis}
\frac{\beta_*}{{\alpha^*}} \Vert r(u_\Lambda) \Vert_{\phi^*} \leq
\| u - u_\Lambda \|_{H^1_0(\Omega)} \leq 
\frac {\beta^*}{{\alpha_*}} \Vert r(u_\Lambda) \Vert_{\phi^*} \;.
\end{equation}
The norm of the residual is hardly computable in 
practice, since in general the residual $r(u_\Lambda)$ contains infinitely many coefficients.
Therefore, we introduce an approximation $\wtilde{r}(u_\Lambda)$ of such 
residual with finite expansion (indexed in some finite set $\tilde\Lambda\subset \mathcal{K}$). 
More precisely,  we assume there exists a feasible algorithm which for any fixed parameter $0<\delta<1$
 and for any $v$ with a finite expansion builds an approximation 
$\wtilde{r}(v)$ of $r(v)$ such that the following crucial inequality holds :
\begin{equation}\label{eq:four.2.6}  
\Vert r(v) - \wtilde{r}(v)  \Vert_{\phi^*}  \leq \delta
\Vert \wtilde{r}(v)  \Vert_{\phi^*}  \;.
\end{equation}
This implies $(1-\delta)\Vert {\wtilde{r}}(v)  \Vert_{\phi^*}  \leq \Vert {r}(v)  \Vert_{\phi^*}  \leq 
(1+\delta)\Vert \wtilde{r}(v)  \Vert_{\phi^*} $ so that, by \eqref{eq:four.2.1bis}, we obtain 
\begin{equation}\label{eq:four.2.3bis}
(1-\delta)\frac{\beta_*}{{\alpha^*}} \Vert \wtilde{r}(u_\Lambda) \Vert_{\phi^*} \leq
\| u - u_\Lambda \|_{H^1_0(\Omega)} \leq 
(1+\delta)\frac {\beta^*}{{\alpha_*}} \Vert \wtilde{r}(u_\Lambda) \Vert_{\phi^*} \;.
\end{equation} 
Therefore, we are led to use as a posteriori error estimator the quantity 
\begin{equation}\label{eq:four.2.7}
{\text{Est}}(u_\Lambda)=\Vert \wtilde{r}(u_\Lambda) \Vert_{\phi^*}  
= \left(\sum_{k \in \wtilde{\Lambda}} | (\wtilde{r})_{k}^{\, \widehat{}}|^2\right)^{1/2}\; 
\end{equation}
where $(\wtilde{r})_{k}^{\, \widehat{}}$ are the coefficients of $\wtilde{r}(u_\Lambda)$ with respect to the dual basis $\phi^*$.
In order to adaptively increase the accuracy of the approximation of the Galerkin solution, we apply a
D\"orfler-marking (or bulk-chasing) strategy. Precisely, given $\theta\in (0,1)$ we look for a minimal set $\Lambda^*\subset \mathcal{K}$ such that $${\text{Est}}(u_\Lambda;\Lambda^*):=\|P^*_{\Lambda^*} \wtilde{r}(u_\Lambda) \|_{\phi^*} \geq \theta \, {\text{Est}}(u_\Lambda),$$
which is equivalent to $\Vert\wtilde{r}(u_\Lambda)-P^*_{\Lambda^*} \wtilde{r}(u_\Lambda)\Vert_{\phi^*} \leq \sqrt{1-\theta^2}  \Vert \wtilde{r}(u_\Lambda) \Vert_{\phi^*}$. A set $\Lambda^*$ of minimal cardinality can be immediately determined 
rearranging the coefficients $({\tilde r})^{\,\widehat{}}_k$ in non-increasing order of modulus. 
Next, exploiting the exponential decay of the entries of the stiffness matrix inverse (see Proposition \ref{prop:decay} and Property \ref{prop:inverse.matrix-estimate}), we enrich the set $\Lambda^*$ by considering its neighborhood of some radius $J$ depending on $\theta$ and the constants in \eqref{eq:trunc-invmatr-err}. This will guarantee that the convergence of our algorithm can be made arbitrarily fast by choosing $\theta$ sufficiently close to $1$. As a final ingredient,  we introduce a coarsening procedure which removes the negligible components of the Galerkin solution at the expense of a controlled increase of the approximation error. This stage is crucial to guarantee the optimality (in a sense made precise later on) of the approximate solution produced by our adaptive algorithm.
\subsection{FPC-ADLEG: a feasible adaptive algorithm} \label{sec:defADLEG}
We now introduce the following procedures, by which we build our adaptive algorithm.

\begin{itemize}
\item $u_\Lambda := {\bf GAL}(\Lambda)$ \\
Given a finite subset $\Lambda \subset \cal K$, the output
$u_\Lambda \in V_\Lambda$ is the solution of the Galerkin problem (\ref{eq:four.2}) relative to $\Lambda$.

\item $\wtilde r := \text{\bf F-RES}(v_\Lambda,\delta)$ \\
Given $\delta \in (0,1)$ and a function $v_\Lambda \in V_\Lambda$ for some finite index set $\Lambda$,  the module builds an approximate residual  $\wtilde{r}(v)$ such that \eqref{eq:four.2.6} holds. This is accomplished by building suitable finite approximations of the image $\mathcal{L}v_\Lambda$ and of the right-hand side $f$ (see \cite{CNV:mathcomp}, Sect. 3.2 for further details). Employing a feasible residual allows us to work with a finite set of { dual basis
functions $\phi_k^*$} (i.e., with the index $k$ belonging to a finite dimensional subset of ${\cal K}$), or, equivalently, to  { involve in the expression of the residual} only an upper-left (finite) section of the infinite-dimensional matrix $G_t$ and not the whole $G_t$. As already mentioned, we assume that the polynomial degrees of the basis functions activated during the adaptive algorithm (see below the sets $\partial\Lambda_{n}$ produced by the D\"orfler modulus) never exceed a given maximum degree {$p_{\text{max}}$. This, in turn, is equivalent to assuming that the value of  $p_{\text{max}}$, which determines the construction of $G^{p_{\text{max}}}$ via the Gram-Schmidt procedure, is chosen so large that all the generated upper-left (finite) sections $G^{p}_t$ are contained in $G^{p_{\text{max}}}$.} Clearly, such a choice of $p_{\text{max}}$ is related to the value of the parameter $\delta$ and to the tolerance $tol$ employed to stop the algorithm. The possibility of adaptively increasing $p_{\text{max}}$ during the algorithm in order to fulfil this requirement will be considered elsewhere. \\

\item $\Lambda^* := \text{\bf D\"ORFLER}(r, \theta)$\\
Given $\theta \in (0,1)$ and an element $r \in V'$ having a finite expansion, 
the ouput $\Lambda^* \subset \cal K$ is a finite set of minimal cardinality
such that the following inequality holds:
\begin{equation}\label{eq:four.2.5.5}
\Vert P^*_{\Lambda^*} r \Vert_{\phi^*} \geq \theta  \Vert r \Vert_{\phi^*} \;.
\end{equation}
\end{itemize}

\begin{itemize}
\item $\Lambda^* := \text{\bf ENRICH}(\Lambda,J)$ \\
Given an integer $J \geq 0$ and a finite set $\Lambda \subset \mathcal{K}$, the output is the set
$$
\Lambda^* := 
\{ k \in \mathcal{K}\ : \ \text{ there exists } \ell \in \Lambda \text{  such that } \|k - \ell\|_{\ell^1} \leq J \} \;.
$$
Note that since the procedure adds a $2$-dimensional ball of radius $J$ in the $\ell^1$-metric around each point of $\Lambda$, the cardinality
of the new set $\Lambda^*$ can be estimated as $|\Lambda^*| \lesssim 2 J^2 |\Lambda|$.
\item $\Lambda^* := \text{\bf E-D\"ORFLER}(r, \theta)$\\
Given $\theta \in (0,1)$ and an element $r \in H^{-1}(I)$ with finite expansion, 
the ouput $\Lambda^* \subset \mathcal{K}$ is defined by the sequence 
\begin{equation}\label{eq:aggr1}
\widetilde{\Lambda}:= \text{\bf D\"ORFLER}(r, \theta), \qquad \Lambda^{*}:=\text{\bf ENRICH}(\widetilde{\Lambda},J_\theta) \;
\end{equation}
where $J_\theta$ is the smallest integer for which 
$\psi_{{A}^{-1}} (J_\theta,\bar{\gamma})=C_{{A}^{-1}} {\rm e}^{-\bar{\gamma} J_\theta} \leq 
{\frac{\beta^2_*}{d^*}}\sqrt{\frac{1-\theta^2}{\alpha_* \alpha^*}}$ (recall Property \ref{prop:matrix-estimate}).  {\color{red}}

\item $\Lambda := {\bf COARSE}(w, \epsilon)$\\
Given a function $w \in V_{\Lambda^*}$ for some finite index set $\Lambda^*$, and an accuracy $\epsilon>0$
which is known to satisfy $\Vert u -  w \Vert_{H_0^1(\Omega)} \leq  \epsilon$,
 the output $\Lambda \subseteq \Lambda^*$ is a set of minimal cardinality such that
\begin{equation}\label{eq:def-coarse}
\Vert w - P_\Lambda w \Vert_{\phi} \leq 2 \beta_* \epsilon \;.
\end{equation}
\end{itemize}

We are now ready to introduce our adaptive algorithm, which we call Feasible Predictor-Corrector ADaptive LEGendre method ({\bf FPC-ADLEG}). Given a tolerance $tol \in [0,1)$, a marking parameter $\theta \in (0,1)$
 and a feasibility parameter $0<\delta<\sqrt{1-\theta^2}$,  {\bf FPC-ADLEG} reads as follows. 

\medskip 

{\bf Algorithm FPC-ADLEG}($\theta, \delta, \ tol$)
\begin{itemize}
\item[\ ] Set $u_0:=0$, \ $\Lambda_0:=\emptyset$, $n=-1$
\item[\ ]  $\wtilde{r}_{0}:=\text{\bf F-RES}(u_0, \delta)$	
\item[\ ] do
	\begin{itemize}
	\item[\ ] $n \leftarrow n+1$
	\item[\ ] ${\partial\Lambda}_{n}:= 
          \text{\bf E-D\"ORFLER}(\wtilde{r}_{n}, \theta)$		
        \item[\ ] $\widehat\Lambda_{n+1}:=
			\Lambda_{n} \cup {\partial\Lambda}_{n}$
	\item[\ ] $\widehat{u}_{n+1}:= {\bf GAL}(\widehat\Lambda_{n+1})$
	\item[\ ] $\Lambda_{n+1}:=
		{\bf COARSE}\left(\widehat{u}_{n+1}, 
                  {3\frac{{\beta^*}}{\alpha_*} \sqrt{1-\theta^2}\,\Vert \wtilde{r}_n \Vert_{\phi^*}}\right)$
	\item[\ ] $u_{n+1}:={\bf GAL}(\Lambda_{n+1})$
	\item[\ ] $\wtilde{r}_{n+1}:=\text{\bf F-RES}(u_{n+1},\delta)$	
	\end{itemize} 
\item[\ ]  while $\Vert \wtilde{r}_{n+1} \Vert_{\phi^*} > \frac{tol}{1+\delta} $
\end{itemize}

\begin{theorem}[{contraction property of {\bf FPC-ADLEG}}]\label{toe:four2}
Setting $\rho:= 9 \frac{\alpha^*}{\alpha_*}\frac{\beta^*}{\beta_*}\frac{\sqrt{1-\theta^2}}{1-\delta}$ then the errors $u-u_n$ generated for $n \geq 0$ by 
the algorithm satisfy the inequalities
\begin{equation}\label{eq:adgev.2}
\| u-u_{n+1} \|_{H^1_0(\Omega)} \leq \rho \|u-u_{n} \|_{H^1_0(\Omega)}. 
\end{equation}
Therefore, if $\theta$ is chosen in such a way that $\rho<1$, 
for any $tol>0$ the algorithm terminates in a finite number of iterations, whereas for $tol=0$
the sequence $u_n$ converges to $u$ in $H^1_0(\Omega)$ as $n \to \infty$.
\endproof
\end{theorem}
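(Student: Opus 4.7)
The strategy is to establish \eqref{eq:adgev.2} in three phases matching the non-trivial stages of one iteration of \textbf{FPC-ADLEG}: first a reduction estimate for the predictor $\widehat u_{n+1}$ obtained by the enriched Galerkin solve, then a triangle-inequality control of the perturbation introduced by the coarsening step, and finally Galerkin quasi-optimality of $u_{n+1}$ on $V_{\Lambda_{n+1}}$.

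Phase one starts from the defining D\"orfler inequality $\|\wtilde r_n - P^*_{\widetilde\Lambda}\wtilde r_n\|_{\phi^*}\le\sqrt{1-\theta^2}\,\|\wtilde r_n\|_{\phi^*}$ combined with the feasibility bound \eqref{eq:four.2.6}; a triangle-inequality calculation produces the analogous estimate for the exact residual $r_n = r(u_n)$, picking up only an extra $O(\delta)$ contribution controlled by the algorithmic assumption $\delta<\sqrt{1-\theta^2}$. This residual-side estimate must then be transferred to an error-side estimate on the enriched set $\partial\Lambda_n$. Writing the error in coefficient form as $\mathbf e_n = A_\phi^{-1}\mathbf r_n$ and splitting $\mathbf r_n = P^*_{\widetilde\Lambda}\mathbf r_n + (\mathbf r_n - P^*_{\widetilde\Lambda}\mathbf r_n)$, the exponential decay of $A_\phi^{-1}$ established in Proposition \ref{prop:decay} (via Property \ref{prop:inverse.matrix-estimate}) together with the truncation estimate in Property \ref{prop:matrix-estimate} show that the portion of $A_\phi^{-1}P^*_{\widetilde\Lambda}\mathbf r_n$ lying outside $\partial\Lambda_n=\text{ENRICH}(\widetilde\Lambda,J_\theta)$ is controlled by $\psi_{A_\phi^{-1}}(J_\theta,\bar\gamma)\|P^*_{\widetilde\Lambda}\mathbf r_n\|$; the prescribed choice of $J_\theta$ in \textbf{E-D\"ORFLER} is calibrated precisely so that this term fits into the same $\sqrt{1-\theta^2}$-envelope as the D\"orfler tail. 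Combining this with Galerkin orthogonality $\tvert u-\widehat u_{n+1}\tvert^2 = \tvert u-u_n\tvert^2 - \tvert \widehat u_{n+1}-u_n\tvert^2$ on $V_{\widehat\Lambda_{n+1}}$ and converting through the norm equivalences \eqref{eq:four.1bis}, \eqref{eq:propNOBS.7} yields the a posteriori bound
\begin{equation*}
\|u-\widehat u_{n+1}\|_{H^1_0(\Omega)} \;\le\; 3\frac{\beta^*}{\alpha_*}\sqrt{1-\theta^2}\,\|\wtilde r_n\|_{\phi^*} \;=\; \epsilon,
\end{equation*}
matching the tolerance $\epsilon$ passed to \textbf{COARSE} and certifying that the call is legitimate.

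Phase two then gives, via \eqref{eq:def-coarse} and \eqref{eq:propNOBS.7},
\begin{equation*}
\|u - P_{\Lambda_{n+1}}\widehat u_{n+1}\|_{H^1_0(\Omega)} \;\le\; \|u-\widehat u_{n+1}\|_{H^1_0(\Omega)} + \tfrac{1}{\beta_*}\|\widehat u_{n+1}-P_{\Lambda_{n+1}}\widehat u_{n+1}\|_\phi \;\le\; 3\epsilon.
\end{equation*}
Phase three closes the loop by Galerkin energy-optimality: $\|u-u_{n+1}\|_{H^1_0}\le \sqrt{\alpha^*/\alpha_*}\,\|u-P_{\Lambda_{n+1}}\widehat u_{n+1}\|_{H^1_0}\le 3\sqrt{\alpha^*/\alpha_*}\,\epsilon$. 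Substituting the definition of $\epsilon$ and bounding $\|\wtilde r_n\|_{\phi^*}\le \frac{\alpha^*}{\beta_*(1-\delta)}\|u-u_n\|_{H^1_0}$ via \eqref{eq:four.2.3bis} reproduces the claimed factor $\rho = 9\frac{\alpha^*}{\alpha_*}\frac{\beta^*}{\beta_*}\frac{\sqrt{1-\theta^2}}{1-\delta}$, where the numerical constant $9$ records the two factors of $3$ accumulated in phases two and three.

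The main obstacle I expect is phase one, specifically the passage from the residual-concentration statement on $\widetilde\Lambda$ to an error-concentration statement on the enriched set $\partial\Lambda_n$. This is the step where the exponential decay of $A_\phi^{-1}$ -- the whole motivation for the quasi-orthonormalized basis $\phi$ -- enters essentially, and the success of the argument hinges on matching the prescribed truncation tolerance $\frac{\beta_*^2}{d^*}\sqrt{(1-\theta^2)/(\alpha_*\alpha^*)}$ defining $J_\theta$ with the $\sqrt{1-\theta^2}$ factor carried by the D\"orfler tail, so that the final contraction does not degenerate. Once \eqref{eq:adgev.2} is in hand with $\rho<1$, termination for $tol>0$ and convergence for $tol=0$ follow by a standard geometric iteration argument, using \eqref{eq:four.2.3bis} to translate the stopping criterion on $\|\wtilde r_{n+1}\|_{\phi^*}$ into an $H^1_0$-error bound on $u-u_{n+1}$.
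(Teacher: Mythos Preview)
Your three-phase outline is correct and matches the argument the paper defers to in \cite[Sect.~3.3]{CNV:mathcomp}; the paper's own proof here is essentially a pointer to that reference together with the norm equivalences \eqref{eq:four.2.1bis}--\eqref{eq:four.2.3bis}, and you have faithfully reconstructed the predictor/coarsening/Galerkin-optimality structure. One bookkeeping remark: with the constants exactly as you track them, phase three contributes a factor $\sqrt{\alpha^*/\alpha_*}$ on top of the $\alpha^*/\alpha_*$ already present in $\epsilon\cdot\|\wtilde r_n\|_{\phi^*}^{-1}\|u-u_n\|_{H^1_0}$, so your chain literally yields $9(\alpha^*/\alpha_*)^{3/2}\frac{\beta^*}{\beta_*}\frac{\sqrt{1-\theta^2}}{1-\delta}$ rather than the stated $\rho$; this does not affect the argument or the conclusion, only the precise value of the contraction constant.
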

\begin{proof} The contraction factor $\rho$ can be estimated following the same guidelines used in \cite[Sect. 3.3]{CNV:mathcomp} for getting formula (3.23) therein;
obviously, here we use eqns. (\ref{eq:four.2.1bis}) and \eqref{eq:four.2.3bis}. From \eqref{eq:adgev.2} 
it is standard to deduce the bounds 
$\| u-u_{n} \|_{H^1_0(\Omega)} \leq \rho^n \|u-u_{0} \|_{H^1_0(\Omega)}$ for any $n\geq 0$, which imply the convergence of the algorithm for $tol=0$ and the finite termination property for $tol>0$ using the left-hand side estimate in \eqref{eq:four.2.3bis}.
\end{proof}

Note that each iteration of {\bf PC-ADLEG} can be viewed as a predictor step followed by a corrector step. 
The predictor step guarantees an arbitrarily large error reduction (by suitably
enriching the output set from the D\"orfler procedure) at the expense of 
possibly activating an unnecessarily large number of  basis functions. 
The coarsening procedure acts as a corrector step which removes the negligible components of the output of the predictor step. 
The quantitative description of this mechanism  will be possible after we introduce suitable sparsity classes.

\subsection{Nonlinear approximation and sparsity classes}
Given any $v\in V$ we define its {\sl best $N$-term approximation error} as 
$$
E_N(v)= \inf_{\Lambda \subset {\cal K} , 
\ |\Lambda|=N} \Vert v - P_{\Lambda} v \Vert_\phi \;.
$$
We will be  interested in classifying functions according to the decay law of their best $N$-term approximations, as $N\to\infty$, i.e., according to the ``sparsity'' of their expansions along the NOBS basis. 
In particular, we will consider the following exponential class.
\begin{definition}[{Exponential class of functions}]\label{def:AGev} 
For $\gamma >0$ and $0 < q \leq 2$, we denote by ${\mathcal A}^{\gamma,q}_G$ the {set} defined as
$$
{\mathcal A}^{\gamma,q}_G { := \Big\{ v \in V \ : 
\ \Vert v \Vert_{{\mathcal A}^{\gamma,q}_G}:= 
\sup_{N \geq 0} \, E_N(v) \, \exp({\gamma (N/2)^{q/2} }) < +\infty \Big\} \;.}
$$
\end{definition}
\noindent For functions $v$ in ${\mathcal A}^{\gamma,q}_G$ one can estimate the minimal cardinality of a set $\Lambda$ such that  $\|v-P_\Lambda v\|_\phi \lesssim \varepsilon $ as follows 
\begin{equation}\label{bound:optimal}
\vert \Lambda \vert \leq  \frac{2}{\gamma^{2/q}} \left( \log  \frac{\Vert v \Vert_{{\mathcal A}^{\gamma,q}_G}}{\varepsilon} \right)^{2/t} +1. 
\end{equation}

We note that the class of functions that are analytic in an ellipsoid containing in its interior the set $\bar\Omega$ belongs to ${\mathcal A}^{\gamma,1}_G$. More generally, functions that are not analytic in $\bar \Omega$ but possess a certain Gevrey regularity 
belong to ${\mathcal A}^{\gamma,q}_G $ for some $0<q<1$ (see 
\cite{Canuto-Nochetto-Verani-CMA:2014,BG:72} for more details).

Let us assume that the solution $u$ to \eqref{eq:four03} belongs to some $\mathcal{A}^{\gamma,q}_G$. The optimality of an algorithm for approximating $u$ is defined as the capability of constructing, for any $\epsilon >0$, a finite dimensional approximation $u_\Lambda$ satisfying $\| u- u_\Lambda\|_{H^1_0(\Omega)} \leq \epsilon$ with the cardinality of $\Lambda:=\text{supp} u_\Lambda$ bounded as in \eqref{bound:optimal}, possibly up to some additive constant. A further optimality requirement concerns the cardinality of the supports of all the intermediate functions introduced by the algorithm in order to compute $u_\Lambda$: 
these cardinalities should all be proportional to $\vert \Lambda\vert$, with a proportionality constant independent of $\epsilon$.

For the analysis of the optimality of our algorithm it is important to investigate 
the sparsity class of the image ${\mathcal L}v$ for the operator ${\mathcal L}$ defined in \eqref{eq:four03}, when
the function $v$ belongs to the sparsity class
$\mathcal{A}^{\gamma,q}_G$. The proof is omitted, as it is similar to the one of \cite[Proposition 5.2]{CNV:mathcomp}.
\begin{proposition}[{Continuity of ${\mathcal L}$ in $\mathcal{A}^{\gamma,q}_G$}]\label{propos:spars-res}
Let the differential operator ${\mathcal L}$ be such that the corresponding stiffness matrix satisfies 
${A}_\phi \in {\mathcal D}_e(\gamma_{\mathcal L})$ for some constant $\gamma_{\mathcal L}>0$ (recall Proposition \ref{prop:decay}).
Assume that $v \in {\mathcal A}^{\gamma,q}_G$ for some $\gamma>0$ and $q \in (0,2]$. 
Let one of the two following set of conditions be satisfied.
\begin{enumerate}
\item[\rm (a)]
If the matrix ${A}_\phi$ is banded with $2m+1$ non-zero
diagonals, {let us set}
$$
\bar{\gamma}= \frac{\gamma}{(2m+1)^{q/2}} \;, \qquad \bar{q}= q \;.
$$
\item[\rm (b)]
{If the matrix ${A}_\phi$ is dense}, but the constants $\gamma_{\mathcal L}$ and $\gamma$ satisfy
the inequality $\gamma<  2^{q/2} \gamma_{\mathcal L}$, {let us set} 
$$
\bar{\gamma}= \zeta(q)\gamma \;, \qquad \bar{q}= \frac{q}{1+q} \;,
$$
where we define 
\begin{equation}\label{aux-funct}
\zeta(q) :=  \left( \frac{1+q}{8 \, 2^{q}} \right)^{\frac{q}{2(1+q)}}\;.
\end{equation}
\end{enumerate}
Then, one has ${\cal L}v \in {\mathcal A}^{\bar{\gamma},\bar{q}}_G$, with
\begin{equation}\label{eq:spars11bis}
\Vert {\cal L}v \Vert_{{\mathcal A}_G^{\bar{\gamma},\bar{q}}} \lsim 
\Vert v \Vert_{{\mathcal A}_G^{\gamma,q}} \;.
\end{equation}
\end{proposition}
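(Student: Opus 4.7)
The plan is to approximate $\mathcal{L}v$ by a finitely-supported element in its dual-NOBS coefficient representation, using two simultaneous truncations: one of the function $v$ (via its sparsity class) and one of the stiffness operator $A_\phi$ (trivially in case (a), and via Property \ref{prop:matrix-estimate} in case (b)). Fix $v \in \mathcal{A}^{\gamma,q}_G$ and, for each $N \geq 0$, take $v_N = P_{\Lambda_N} v$ a best $N$-term approximation, so that $\|v - v_N\|_\phi \leq \|v\|_{\mathcal{A}^{\gamma,q}_G}\, e^{-\gamma (N/2)^{q/2}}$. Since $\widehat{\mathcal{L}v} = A_\phi \hat v$ under the identifications of Section \ref{sec:gen}, I will bound $E_M(\mathcal{L}v)$ from above by $\|A_\phi \hat v - w_M\|_{\phi^*}$ for a suitable $w_M$ of cardinality $M$, and then invert the relation $M = M(N)$.

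For Case (a), bandedness with $2m+1$ non-zero diagonals means each column of $A_\phi$ has at most $2m+1$ non-zero entries, hence $A_\phi \hat v_N$ is supported on at most $M = (2m+1)\,N$ indices. Operator continuity gives
$$
E_M(\mathcal{L}v) \leq \|A_\phi(\hat v - \hat v_N)\|_{\phi^*} \lesssim \|v - v_N\|_\phi \leq \|v\|_{\mathcal{A}^{\gamma,q}_G}\, e^{-\gamma(N/2)^{q/2}},
$$
and substituting $N = M/(2m+1)$ yields the claimed exponent $\bar\gamma = \gamma/(2m+1)^{q/2}$ with $\bar q = q$.

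For Case (b), I would couple the sparsity truncation $v_N$ with the matrix truncation $A_{\phi,J}$ of Property \ref{prop:matrix-estimate}. The triangle inequality produces
$$
\|\mathcal{L}v - A_{\phi,J}\hat v_N\|_{\phi^*} \leq \|A_\phi\|\,\|v-v_N\|_\phi + \|A_\phi - A_{\phi,J}\|\,\|v_N\|_\phi \lesssim e^{-\gamma(N/2)^{q/2}} + e^{-\gamma_{\mathcal{L}} J}.
$$
The support of $A_{\phi,J} \hat v_N$ lies in the $\ell^1$-dilation of $\Lambda_N$ by $J$, whose cardinality is bounded by $|B_J^{(2)}| \leq 2J^2 + 2J + 1$, giving $M \lesssim N\, J^2$. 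I would then optimize over $J,N$ for fixed $M$: choosing $J$ proportional to $(N/2)^{q/2}$ balances the two exponentials, and the hypothesis $\gamma < 2^{q/2}\gamma_{\mathcal{L}}$ is exactly what makes the proportionality constant strictly smaller than one, so that the truncation error in the operator does not dominate. Inverting the resulting relation $M \sim N^{1+q}$ gives $N \sim M^{1/(1+q)}$ and an exponent of the form $-\bar\gamma (M/2)^{\bar q/2}$ with $\bar q = q/(1+q)$.

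The main obstacle is bookkeeping the constants in the Case (b) optimization sharply enough to recover the announced value $\zeta(q) = \bigl((1+q)/(8\cdot 2^{q})\bigr)^{q/(2(1+q))}$: the factor $8$ stems from the 2D $\ell^1$-ball cardinality bound, the $2^q$ from rewriting $(N/2)^{q/2}$ as $N^{q/2}/2^{q/2}$, and the $(1+q)$ from the exponent obtained when eliminating $N$ in terms of $M$. Conceptually the argument is a direct transposition of \cite[Prop.~5.2]{CNV:mathcomp} to the present 2D setting, but the Gevrey exponent $q$ and the non-orthonormal basis force one to handle these three ingredients carefully and uniformly in $\|v\|_{\mathcal{A}^{\gamma,q}_G}$ to obtain the norm bound \eqref{eq:spars11bis}.
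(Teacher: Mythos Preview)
Your proposal is correct and follows exactly the approach the paper intends: the paper omits the proof and simply refers to \cite[Proposition~5.2]{CNV:mathcomp}, and your argument is precisely the transposition of that proof to the present setting---truncate $v$ by its best $N$-term approximation, truncate $A_\phi$ trivially in case~(a) or via Property~\ref{prop:matrix-estimate} in case~(b), bound the resulting support size, and invert the relation $M=M(N)$. The constant bookkeeping you flag (the factor $8$ from the 2D $\ell^1$-ball, the $2^q$ from the normalization of $(N/2)^{q/2}$, and the $1+q$ from eliminating $N$) is indeed the only place where care is needed, and you have identified the sources correctly.
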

This result indicates that the residual is expected to belong to a less favorable sparsity class  than the one of the solution.

\smallskip
We are ready to relate the cardinality of the finite expansions activated by the algorithm {\bf FPC-ADLEG} to the sparsity class of the solution. 

\begin{theorem}[{Cardinalities in {\bf FPC-ADLEG}}]\label{teo:pc-ADLEG}
Suppose that $u \in {\mathcal A}^{\gamma,q}_G$, for some $\gamma >0$ and $q \in (0,2]$. 
Then, there exists a constant $C>1$ independent of $\theta$ such that  
\begin{equation}\label{eq:8.3a}
|{\rm supp}\, u_n| \leq \frac{2}{\gamma^{2/q}}
\left( \log  \frac{\Vert u \Vert_{{\mathcal A}^{\gamma,q}_G}}{\ \Vert u-u_n \Vert_{H^1_0(\Omega)}} + \log C \right)^{2/q} +1 \;, 
{\qquad\forall\ n\ge0.}
\end{equation}
If, in addition, the assumptions of Proposition \ref{propos:spars-res} are satisfied, then 
there exists a constant $C_*>1$ proportional to $(1-\theta^2)^{-1/2}$ such that 
 the feasible residual $\wtilde r(u_n)$ satisfies
\begin{equation}\label{eq:8.3b}
\vert {\rm supp}~\wtilde{r}(u_{n}) \vert  \leq \frac{2}{{\wtilde{\gamma}}^{2/\bar{q}} }
\left(\log  \frac{\Vert u \Vert_{{\mathcal A}^{\gamma,q}_G}}{\ \| u-u_{n} \|_{H^1_0(\Omega)}} 
+\log C_*\right)^{2/\bar{q}} +1
\qquad\forall \, n\ge0 \,,
\end{equation}
where $\wtilde{\gamma}= 2^{{-\bar q}/{2}} \bar\gamma$ and $\bar{\gamma}\leq \gamma$, $\bar{q}\leq q$  are the parameters 
defined in Proposition \ref{propos:spars-res}. Moreover, the intermediate Galerkin solution $\hat u_{n+1}$ computed in the predictor step satisfies 
\begin{equation}\label{eq:8.3c}
| {\rm supp}\, \hat u_{n+1}| \leq \frac{2}{\hat{\gamma}^{d/\bar{q}}}
\left( \log  \frac{\Vert u \Vert_{{\mathcal A}^{\gamma,q}_G}}{\ \| u-u_{n} \|_{H^1_0(\Omega)}} 
+\log C_*\right)^{2/\bar{q}} +1 \;, 
{\qquad\forall\ n\ge0\;,}
\end{equation}
where $\hat{\gamma}$ is defined by the relation $\hat{\gamma}^{-2/\bar{q}}=\gamma^{-2/q}+ 2 J_\theta^2 \bar{\gamma}^{-2/\bar{q}}$ with $J_\theta$ introduced in {\bf E-D\"ORFLER}.
\end{theorem}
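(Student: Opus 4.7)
The plan is to chain three ingredients: (i) the minimality of the output of \textbf{COARSE} and \textbf{D\"ORFLER}; (ii) the logarithmic cardinality estimate \eqref{bound:optimal} valid for every element of $\mathcal{A}^{\gamma,q}_G$; and (iii) Proposition \ref{propos:spars-res}, which transfers sparsity from $u$ to the true residual $r(u_n)=\mathcal{L}(u-u_n)$. The three bounds \eqref{eq:8.3a}-\eqref{eq:8.3c} are then obtained in sequence, each reusing the preceding one.

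First I would attack \eqref{eq:8.3a}. The set $\Lambda_n$ is output by \textbf{COARSE} applied to $\hat u_n$ with tolerance $\epsilon_n:= 3(\beta^*/\alpha_*)\sqrt{1-\theta^2}\,\|\tilde r_{n-1}\|_{\phi^*}$. Combining \eqref{eq:four.2.3bis} with the contraction bound \eqref{eq:adgev.2} shows $\epsilon_n\simeq\|u-u_n\|_{H^1_0(\Omega)}$, with hidden constants depending only on $\theta,\delta$ and on the equivalence constants in \eqref{eq:propNOBS.7}-\eqref{eq:propNOBS.8}. Since $u\in\mathcal{A}^{\gamma,q}_G$, for every $\eta>0$ there is a set $\Lambda_\eta$ of cardinality bounded as in \eqref{bound:optimal} such that $\|u-P_{\Lambda_\eta}u\|_\phi\le\eta$. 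Picking $\eta=c_1\,\|u-u_n\|_{H^1_0(\Omega)}$ small enough that the triangle inequality $\|\hat u_n-P_{\Lambda_\eta}\hat u_n\|_\phi \le 2\|\hat u_n-u\|_\phi + \|u-P_{\Lambda_\eta}u\|_\phi$ (using that $P_{\Lambda_\eta}$ is a $\phi$-orthogonal projection) is dominated by $2\beta_*\epsilon_n$ via \eqref{eq:propNOBS.7}, the minimality of \textbf{COARSE} forces $|\Lambda_n|\le|\Lambda_\eta|$, and \eqref{bound:optimal} produces \eqref{eq:8.3a} after absorbing $c_1$ into the constant $C$.

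Next I would establish \eqref{eq:8.3b}. A by-product of the previous step is that the coarsened Galerkin solution $u_n$ is quasi-best, so that $u-u_n$ belongs to $\mathcal{A}^{\gamma,q}_G$ with norm $\lesssim\|u\|_{\mathcal{A}^{\gamma,q}_G}$. Because $r(u_n)=\mathcal{L}(u-u_n)$, Proposition \ref{propos:spars-res} yields $r(u_n)\in\mathcal{A}^{\bar\gamma,\bar q}_G$ with $\|r(u_n)\|_{\mathcal{A}^{\bar\gamma,\bar q}_G}\lesssim\|u\|_{\mathcal{A}^{\gamma,q}_G}$. The feasibility condition \eqref{eq:four.2.6} combined with \eqref{eq:four.2.3bis} shows that $\tilde r_n$ must reach an accuracy comparable to $\sqrt{1-\theta^2}\,\|u-u_n\|_{H^1_0(\Omega)}$; applying \eqref{bound:optimal} in the class $\mathcal{A}^{\bar\gamma,\bar q}_G$ and absorbing the factor $2^{\bar q/2}$ arising from the $(N/2)^{q/2}$ in Definition \ref{def:AGev} into the redefined rate $\tilde\gamma=2^{-\bar q/2}\bar\gamma$ produces \eqref{eq:8.3b}, with $C_*\propto(1-\theta^2)^{-1/2}$ tracking the $\sqrt{1-\theta^2}$ factor in the accuracy requirement. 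Finally, \eqref{eq:8.3c} follows from the decomposition $\widehat\Lambda_{n+1}=\Lambda_n\cup\partial\Lambda_n$: the \textbf{E-D\"ORFLER} module first produces via bulk-chasing a set $\widetilde\Lambda_n$ of cardinality bounded as in \eqref{eq:8.3b}, and then \textbf{ENRICH} multiplies that cardinality by $2J_\theta^2$. Adding the Stage~1 bound on $|\Lambda_n|$, invoking the monotonicity $L^{2/q}\le L^{2/\bar q}$ on the common logarithmic factor $L$ (since $\bar q\le q$), and recombining via $\hat\gamma^{-2/\bar q}=\gamma^{-2/q}+2J_\theta^2\bar\gamma^{-2/\bar q}$ delivers \eqref{eq:8.3c}.

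The main obstacle is the quasi-optimality claim used at the start of the second stage, namely that $u_n$ (output of \textbf{COARSE}) inherits a controlled $\mathcal{A}^{\gamma,q}_G$-norm from $u$. This rests on the minimality of \textbf{COARSE} combined with a careful manipulation of the $\phi$-norm equivalence, and must be secured before invoking Proposition \ref{propos:spars-res} to pass from $u-u_n$ to the true residual. Once this is in place, the remaining bookkeeping of the constants $\gamma,q,\bar\gamma,\bar q,\tilde\gamma,\hat\gamma,J_\theta$ and the reassembly into the stated logarithmic form of the three bounds is essentially routine.
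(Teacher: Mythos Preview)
Your proposal is correct and follows the same route the paper (implicitly) takes by deferring to \cite{CNV:mathcomp}, Theorems~8.1 and~8.3: minimality of \textbf{COARSE} combined with \eqref{bound:optimal} for \eqref{eq:8.3a}, transfer of sparsity from $u-u_n$ to $r(u_n)$ via Proposition~\ref{propos:spars-res} for \eqref{eq:8.3b}, and additivity of cardinalities through \textbf{E-D\"ORFLER} for \eqref{eq:8.3c}. One small caveat worth tightening: the bound on $|\mathrm{supp}\,\wtilde r(u_n)|$ does not follow from the abstract feasibility condition \eqref{eq:four.2.6} alone---it requires the specific construction of \textbf{F-RES} by truncation of $A_\phi$ and $f$ described in \cite{CNV:mathcomp}, Sect.~3.2, so you should invoke that construction explicitly rather than arguing as though \textbf{F-RES} were a minimal-cardinality approximation of $r(u_n)$.
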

\begin{proof}
The results can be established following the guidelines used in \cite{CNV:mathcomp}
 for the Fourier case: see, in particular, the proof of Theorem 8.1 for establishing (\ref{eq:8.3a})
and the proof of Theorem 8.3  for establishing (\ref{eq:8.3b})-(\ref{eq:8.3c}). We omit the details since the essential ingredients are the same as in the quoted reference.
\end{proof}

The theorem indicates that the predictor step is
driven by the sparsity class of the residual, which in view of
Proposition \ref{propos:spars-res} may be worse than the sparsity class of the exact solution; 
therefore, optimality with respect to the latter class is not guaranteed.
On the other hand, the corrector step brings  the cardinality close to the optimal one determined by the sparsity class of the solution
(compare (\ref{eq:8.3a}) to (\ref{bound:optimal}) in which $v=u$ and $\varepsilon = \Vert u-u_n \Vert_{H^1_0(\Omega)}$).

\section*{Appendix}
\begin{proof}[Proof of Lemma \ref{lm:L}.]
In the following we will make extensive use of the following property  of the product of univariate Legendre polynomials (see, e.g., \cite{Adams:1878}):
\begin{equation}\label{product}
L_m(x_i)L_n(x_i)=\sum_{r=0}^{\min (m,n)} A_{m,n}^r L_{m+n-2r} (x_i)\qquad i=1,2
\end{equation}
with 
$$ A_{m,n}^r:= \frac{A_{m-r} A_r A_{n-r}}{A_{n+m-r}}\frac{2n+2m-4r+1}{2n+2m-2r+1}$$
and $$ A_0:=1\ , \qquad A_m:=\frac{1\cdot 3 \cdot 5 \ldots (2m-1)}{m !}=\frac{(2m)!}{2^m (m!)^2}.$$

Moreover we recall 
the following asymptotic estimates (see \cite{Canuto-Nochetto-Verani-CMA:2014}):

\begin{enumerate}[$\bullet$]
\item Case $0< r <\min(m,n)$:
\begin{equation}\label{asympt:1}
 \frac{A_{m-r} A_r A_{n-r}}{A_{n+m-r}}\sim 
 \frac{1}{\pi} \frac{\sqrt{n+m-r}}{\sqrt{m-r}\sqrt{n-r}\sqrt{r}}\ ;
\end{equation}
\item Case $r=0$:
\begin{equation}\label{asympt:2}
 \frac{A_{m} A_{n}}{A_{n+m}}\sim \frac{1}{\sqrt{\pi}} \frac{\sqrt{n+m}}{\sqrt{nm}} \ ;
\end{equation}
\item Case $r=\min(m,n)$ and $m\not= n$:
\begin{equation}\label{asympt:3}
\frac{A_{\min(m,n)} A_{\vert m -n \vert}}{{{A_{\max(m,n)}}}}\sim \frac{1}{\sqrt{\pi}} \frac{\sqrt{\max(m,n)}}{\sqrt{\min(m,n)}\sqrt{\vert m -n \vert}}\ . 
\end{equation}
When $m=n$ it is sufficient to use $A_0=1$ to get $\frac{A_{m} A_{0}}{A_{m}}=1$.
\end{enumerate}

We begin from the following expression:
\begin{equation}
a^\eta_{mn}=\int_{\Omega} \nu(x)\nabla\eta_m(x)\nabla\eta_n(x) dx + \int_\Omega \sigma(x)\eta_m(x)\eta_n(x) dx=: a_{mn}^{(1)} + a^{(0)}_{mn}.
\end{equation}
We first estimate $a^{(1)}_{mn}$.  Let $m=(m_1,m_2)$ and $n=(n_1,n_2)$ then using the notation $\nu:=\nu(x_1,x_2)$ and the relation $\eta^\prime_k(x_i)=-\sqrt{k-1/2} L_{k-1}(x_i)$, $i=1,2$, we have
\begin{eqnarray}
a^{(1)}_{mn}&=&\int_\Omega \nu \eta^\prime_{m_1}(x_1)\eta^\prime_{n_1}(x_1)\eta_{m_2}(x_2)
\eta_{m_2}(x_2) dx_1 dx_2 + \int_\Omega \nu \eta_{m_1}(x_1)\eta_{n_1}(x_1)\eta^\prime_{m_2}(x_2)\eta^\prime_{m_2}(x_2) dx_1 dx_2\nonumber\\
&=&B^1_{m,n}\int_{\Omega} \nu L_{m_1-1}(x_1)L_{n_1-1}(x_1)[L_{m_2-2}(x_2)-L_{m_2}(x_2)][L_{n_2-2}(x_2)-L_{n_2}(x_2)]dx_1dx_2\nonumber\\
&&\!\!\!\!\!\!+~B^2_{m,n}\int_{\Omega} \nu [L_{m_1-2}(x_1)-L_{m_1}(x_1)][L_{n_1-2}(x_1)-L_{n_1}(x_2)]  L_{m_2-1}(x_2)L_{n_2-1}(x_2)dx_1dx_2\nonumber\\
&=:& J_1 + J_2
\end{eqnarray}
where we set $$B^1_{m,n}:=\frac{\sqrt{m_1-1/2}\sqrt{n_1-1/2}}{\sqrt{4m_2-2}\sqrt{4n_2-2}}\qquad 
B^2_{m,n}:= \frac{\sqrt{m_2-1/2}\sqrt{n_2-1/2}}{\sqrt{4m_1-2}\sqrt{4n_1-2}}.$$ 
Let us focus on the first term $J_1$. Straightforward calculations yield 
\begin{eqnarray}
J_1&=&B^1_{m,n}\Big\{
\int_{\Omega} \nu L_{m_1-1}(x_1)L_{n_1-1}(x_1) L_{m_2-2}(x_2)L_{n_2-2}(x_2) dx_1dx_2\nonumber\\
&&- \int_{\Omega} \nu L_{m_1-1}(x_1)L_{n_1-1}(x_1) L_{m_2}(x_2)L_{n_2-2}(x_2) dx_1dx_2\nonumber\\
&&- \int_{\Omega} \nu L_{m_1-1}(x_1)L_{n_1-1}(x_1) L_{m_2-2}(x_2)L_{n_2}(x_2) dx_1dx_2\nonumber\\
&&+ \int_{\Omega} \nu L_{m_1-1}(x_1)L_{n_1-1}(x_1) L_{m_2}(x_2)L_{n_2}(x_2) dx_1dx_2\Big\}\nonumber\\
&=:& J_1^1+ J_1^2+ J_1^3+ J_1^4.\nonumber
\end{eqnarray}
For the ease  of presentation we only show how to estimate $J_1^1$ as the other terms can be worked out similarly. Employing  \eqref{product} we obtain 
\begin{eqnarray}
J_1^1&=&B_{m,n}^1 \int_\Omega \nu \sum_{r_1=0}^{\min(m_1-1,n_1-1)} A_{m_1-1,n_1-1}^{r_1} L_{m_1+n_1-2-2r_1}(x_1) \nonumber\\
&&\qquad\sum_{r_2=0}^{\min(m_2-2,n_2-2)} A_{m_2-2,n_2-2}^{r_2} L_{m_2+n_2-4-2r_2}(x_2) ~dx_1x_2 
\nonumber\\
&=&B_{m,n}^1  \sum_{r_1=0}^{\min(m_1-1,n_1-1)} \sum_{r_2=0}^{\min(m_2-2,n_2-2)} 
A_{m_1-1,n_1-1}^{r_1} A_{m_2-2,n_2-2}^{r_2} 
\int_\Omega \nu L_{m_1+n_1-2-2r_1}L_{m_2+n_2-4-2r_2}dx_1x_2. \nonumber
\end{eqnarray}
Using  the multidimensional Legendre expansion $\nu(x)=\sum_{k\in{\cal K}} \nu_k L_k(x)$
we obtain
\begin{eqnarray}
J_1^1&=&4 B_{m,n}^1 \sum_{r_1=0}^{\min(m_1-1,n_1-1)} \sum_{r_2=0}^{\min(m_2-2,n_2-2)} 
\frac{A_{m_1-1,n_1-1}^{r_1} A_{m_2-2,n_2-2}^{r_2} \nu_{m_1+n_1-2-2r_1,m_2+n_2-4-2r_2}}{[2(m_1+n_1-2-2r_1)+1][2(m_2+n_2-4-2r_2)+1]}.\nonumber
\end{eqnarray}
We now employ the asymptotic estimates  \eqref{asympt:1}-\eqref{asympt:3}  to bound the terms $A_{m_1-2,n_1-2}^{r_1}$ and $A_{m_2-2,n_2-2}^{r_2}$. Accordingly, we need to distinguish among several cases depending on the combination of the values assumed by $r_1$ and $r_2$. However, for the ease of reading, we only consider the case $0<r_1<\min(m_1-2,n_1-2)$ and $0<r_2<\min(m_2-2,n_2-2)$ as the other ones can be treated similarly. In this case, \eqref{asympt:1}  yields 
\begin{equation}
A_{m_1-2,n_1-2}^{r_1}\simeq \frac{1}{\pi} \frac{\sqrt{n_1+m_1-4-r_1}}{\sqrt{m_1-2-r_1}\sqrt{n_1-2-r_1}\sqrt{r_1}} \frac{2(n_1-2)+2(m_1-2)-4r_1+1}{2(n_1-2)+2(m_1-2)-2r_1+1}.
\end{equation}
A similar estimate holds also for  $A_{m_2-2,n_2-2}^{r_2}$. Thus we have 
\begin{eqnarray}
 &&\frac{ B_{m,n}^1 A_{m_1-1,n_1-1}^{r_1} A_{m_2-2,n_2-2}^{r_2}} {[2(m_1+n_1-2-2r_1)+1][2(m_2+n_2-4-2r_2)+1]} \simeq \nonumber\\
&&\simeq \frac{\sqrt{2m_1-1}\sqrt{2n_1-1}}{\sqrt{2m_2-1}\sqrt{2n_2-1}}
\frac{\sqrt{n_2+m_2-4-r_2}\sqrt{n_1+m_1-4-r_1} }{\sqrt{m_2-2-r_2}\sqrt{n_2-2-2r_2}\sqrt{r_2}
\sqrt{m_2-2-r_2}\sqrt{n_2-2-2r_2}\sqrt{r_2}}\nonumber\\ 
&&\qquad\frac{1}{(2m_2+2n_2-2r_2-7)(2m_1+2n_1-2r_1-3)}
\nonumber\\
&&\simeq\frac{\sqrt{m_1n_1}}{\sqrt{m_1-1-r_1}\sqrt{n_1-1}\sqrt{r_1}}
\frac{1}{\sqrt{n_1+m_1-2-r_1}}
\frac{1}{\sqrt{n_2+m_2-4-r_2}} 
\frac{1}{\sqrt{2m_2-1}\sqrt{2n_2-1}}\nonumber\\
&&\qquad\frac{1}{\sqrt{m_2-2-r_2}\sqrt{n_2-2-2r_2}\sqrt{r_2}}\nonumber\\
&&\simeq
\frac{1}{\sqrt{\min(m_1+n_1-2,\vert m_1-n_1\vert)}}
\frac{1}{\sqrt{n_2+m_2-4-r_2}} 
\frac{1}{\sqrt{2m_2-1}\sqrt{2n_2-1}}\nonumber\\
&&\qquad\frac{1}{\sqrt{m_2-2-r_2}\sqrt{n_2-2-2r_2}\sqrt{r_2}}\nonumber\\
&&\lesssim 1.\nonumber
\end{eqnarray}
Thus we have
\begin{equation}
J_1^1 \lesssim \sum_{r_1=0}^{\min(m_1-1,n_1-1)} \sum_{r_2=0}^{\min(m_2-2,n_2-2)} 
 \nu_{m_1+n_1-2-2r_1,m_2+n_2-4-2r_2}.
 \end{equation}
 Similar estimates can be obtained for the terms $J_1^2,\ldots,J_1^4$ yielding
 \begin{eqnarray}
 J_1&\lesssim&  
 \sum_{r_1=0}^{\min(m_1-1,n_1-1)} \sum_{r_2=0}^{\min(m_2-2,n_2-2)} 
 \nu_{m_1+n_1-2-2r_1,m_2+n_2-4-2r_2}\nonumber\\
 &&+
 \sum_{r_1=0}^{\min(m_1-1,n_1-1)} \sum_{r_2=0}^{\min(m_2,n_2-2)} 
 \nu_{m_1+n_1-2-2r_1,m_2+n_2-2-2r_2}\nonumber\\
 &&+
  \sum_{r_1=0}^{\min(m_1-1,n_1-1)} \sum_{r_2=0}^{\min(m_2-2,n_2)} 
 \nu_{m_1+n_1-2-2r_1,m_2+n_2-2-2r_2}\nonumber\\
 &&+
  \sum_{r_1=0}^{\min(m_1-1,n_1-1)} \sum_{r_2=0}^{\min(m_2,n_2)} 
 \nu_{m_1+n_1-2-2r_1,m_2+n_2-2r_2}.\nonumber
  \end{eqnarray}
  Anaologously, we can prove the following estimate for the term $J_2$
   \begin{eqnarray}
 J_2&\lesssim&  
 \sum_{r_1=0}^{\min(m_1-2,n_1-2)} \sum_{r_2=0}^{\min(m_2-1,n_2-1)} 
 \nu_{m_1+n_1-4-2r_1,m_2+n_2-2-2r_2}\nonumber\\
 &&+
 \sum_{r_1=0}^{\min(m_1,n_1-2)} \sum_{r_2=0}^{\min(m_2-1,n_2-1)} 
 \nu_{m_1+n_1-2-2r_1,m_2+n_2-2-2r_2}\nonumber\\
 &&+
  \sum_{r_1=0}^{\min(m_1-2,n_1)} \sum_{r_2=0}^{\min(m_2-1,n_2-1)} 
 \nu_{m_1+n_1-2-2r_1,m_2+n_2-2-2r_2}\nonumber\\
 &&+
  \sum_{r_1=0}^{\min(m_1,n_1)} \sum_{r_2=0}^{\min(m_2-1,n_2-1)} 
 \nu_{m_1+n_1-2r_1,m_2+n_2-2-2r_2}.\nonumber
  \end{eqnarray}
Assuming $ \vert \nu_k \vert  \leq C_\eta e^{-\gamma \|k\|_{\ell^{1}}}$ for every $k\in{\cal K}$ and employing the above estimates for $J_1$ and $J_2$, we obtain
\begin{eqnarray}
\vert a_{mn}^{(1)}\vert 
 &\lesssim&   e^{-\gamma (\vert m_1-n_1\vert + \vert m_2-n_2\vert)}
\Big\{ \sum_{r_1=0}^{\min(m_1-1,n_1-1)}  \sum_{r_2=0}^{\min(m_2-2,n_2-2)}
e^{-2\gamma (\min(m_1-1,n_1-1)-r_1)} e^{-2\gamma (\min(m_2-2,n_2-2)-r_2)}\nonumber\\
&&+\ldots+ \sum_{r_1=0}^{\min(m_1,n_1)}  \sum_{r_2=0}^{\min(m_2-1,n_2-1)}
e^{-2\gamma (\min(m_1,n_1)-r_1)} e^{-2\gamma (\min(m_2-1,n_2-1)-r_2)}\Big\}\nonumber\\
&\lesssim& e^{-\gamma (\vert m_1-n_1\vert + \vert m_2-n_2\vert)}= C e^{-\gamma \|m-n\|_{\ell^{1}}}.\nonumber
\end{eqnarray}

We now estimate $a^{(0)}_{mn}$. Let $m=(m_1,m_2)$ and $n=(n_1,n_2)$ then recalling \eqref{eq:defBS} and using the notation $\sigma:=\sigma(x_1,x_2)$ we have 
\begin{eqnarray}
a^{(0)}_{mn}&=&\int_\Omega \sigma\eta_{m_1}(x_1)\eta_{n_1}(x_1)\eta_{m_2}(x_2)\eta_{m_2}(x_2)dx_1 dx_2\nonumber\\
&=&
C_m^n
\int_\Omega \sigma
[(L_{m_1-2}-L_{m_1}) (L_{n_1-2}-L_{n_1})](x_1)[(L_{m_2-2}-L_{m_2})(L_{n_2-2}-L_{n_2})](x_2)   
dx_1 dx_2\nonumber\\
&=&C_m^n
\int_\Omega \sigma
[L_{m_1-2}L_{n_1-2}-L_{m_1-2}L_{n_1}-L_{m_1}L_{n_1-2}+L_{n_1}L_{m_1}](x_1)[L_{m_2-2}L_{n_2-2} + \nonumber\\
&&\qquad-L_{m_2-2}L_{n_2}-L_{m_2}L_{n_2-2}+L_{n_2}L_{m_2}](x_2)  dx_1 dx_2\nonumber
\end{eqnarray}
with $C_m^n:=\frac{1}{\sqrt{(4m_1-1)(4m_2-1)(4n_1-1)(4n_2-1)}}$. Now employing \eqref{product} we obtain 
\begin{eqnarray}
a^{(0)}_{mn}&=&C_m^n \int_\Omega \sigma 
\Big[
\sum_{r_1=0}^{\min(m_1-2,n_1-2)} A_{m_1-2,n_1-2}^{r_1} L_{m_1+n_1-4-2r_1}
- \sum_{r_1=0}^{\min(m_1-2,n_1)} A_{m_1-2,n_1}^{r_1} L_{m_1+n_1-2-2r_1}\nonumber\\
&&-\sum_{r_1=0}^{\min(m_1,n_1-2)} A_{m_1,n_1-2}^{r_1} L_{m_1+n_1-2-2r_1}
+\sum_{r_1=0}^{\min(m_1,n_1)} A_{m_1,n_1}^{r_1} L_{m_1+n_1-2r_1}
\Big](x_1)\nonumber\\
&&\Big[
\sum_{r_2=0}^{\min(m_2-2,n_2-2)} A_{m_2-2,n_2-2}^{r_2} L_{m_2+n_2-4-2r_2}
- \sum_{r_2=0}^{\min(m_2-2,n_2)} A_{m_2-2,n_2}^{r_2} L_{m_2+n_2-2-2r_2}\nonumber\\
&&-\sum_{r_2=0}^{\min(m_2,n_2-2)} A_{m_2,n_2-2}^{r_2} L_{m_2+n_2-2-2r_2}
+\sum_{r_2=0}^{\min(m_2,n_2)} A_{m_2,n_2}^{r_2} L_{m_2+n_2-2r_2}
\Big](x_2) dx_1dx_2.\nonumber\\
&=&I_1+\ldots+I_{16}.\nonumber
\end{eqnarray}
We now need to estimate $I_1,\ldots,I_{16}$. To simplify the exposition, we only show how to estimate $I_1$, as the other terms can be treated similarly. 

Using  the multidimensional Legendre expansion $\sigma(x)=\sum_{k\in{\cal K}} \sigma_k L_k(x)$ together  with \eqref{eq:Leg-ort} we get 
\begin{eqnarray}
I_1&=&C_m^n \sum_{r_1=0}^{\min(m_1-2,n_1-2)} 
\sum_{r_2=0}^{\min(m_2-2,n_2-2)}
A_{m_1-2,n_1-2}^{r_1} A_{m_2-2,n_2-2}^{r_2} \int_\Omega \sigma L_{m_1+n_1-4-2r_1}L_{m_2+n_2-4-2r_2}
dx_1dx_2\nonumber\\
&=&
C_m^n  \sum_{r_1=0}^{\min(m_1-2,n_1-2)} 
\sum_{r_2=0}^{\min(m_2-2,n_2-2)}\frac{A_{m_1-2,n_1-2}^{r_1} A_{m_2-2,n_2-2}^{r_2}  \sigma_{m_1+n_1-4-2r_1,m_2+n_2-4-2r_2} 
}{[2(m_1+n_1-4-2r_1)+1][2(m_2+n_2-4-2r_2)+1]}.\nonumber
\end{eqnarray}
We now employ the asymptotic estimates  \eqref{asympt:1}-\eqref{asympt:3}  to bound the terms $A_{m_1-2,n_1-2}^{r_1}$ and $A_{m_2-2,n_2-2}^{r_2}$. Accordingly, we need to distinguish among several cases depending on the combination of the values assumed by $r_1$ and $r_2$. However, for the ease of reading, we only consider the case $0<r_1<\min(m_1-2,n_1-2)$ and $0<r_2<\min(m_2-2,n_2-2)$ as the other ones can be treated similarly. In this case, \eqref{asympt:1}  yields 
\begin{equation}
A_{m_1-2,n_1-2}^{r_1}\simeq \frac{1}{\pi} \frac{\sqrt{n_1+m_1-4-r_1}}{\sqrt{m_1-2-r_1}\sqrt{n_1-2-r_1}\sqrt{r_1}} \frac{2(n_1-2)+2(m_1-2)-4r_1+1}{2(n_1-2)+2(m_1-2)-2r_1+1}.
\end{equation}
A similar estimate holds also for  $A_{m_2-2,n_2-2}^{r_2}$. Hence, we have 
\begin{eqnarray}
\frac{C_m^n A_{m_1-2,n_1-2}^{r_1} A_{m_2-2,n_2-2}^{r_2}}{[2(m_1+n_1-4-2r_1)+1][2(m_2+n_2-4-2r_2)+1]}\lesssim 1.
\end{eqnarray}
Employing \eqref{asympt:2} and  \eqref{asympt:3} yields similar estimates for the cases  $r_1=0,\min(m_1-2,n_1-2)$ and $r_2=0,\min(m_2-2,n_2-2)$. 
In conclusion, we get 
\begin{equation}
I_1\lesssim  \sum_{r_1=0}^{\min(m_1-2,n_1-2)} 
\sum_{r_2=0}^{\min(m_2-2,n_2-2)}\sigma_{m_1+n_1-4-2r_1,m_2+n_2-4-2r_2}. 
\end{equation}
Similar estimates can be obtained for $I_2,\ldots,I_{16}$ thus yielding
\begin{eqnarray}
 a_{mn}^{(0)} &\lesssim&   \sum_{r_1=0}^{\min(m_1-2,n_1-2)}  \sum_{r_2=0}^{\min(m_2-2,n_2-2)}
 \sigma_{m_1+n_1-4-2r_1,m_2+n_2-4-2r_2} \nonumber\\
&& + \ldots +  \sum_{r_1=0}^{\min(m_1,n_1)}  \sum_{r_2=0}^{\min(m_2,n_2)}\sigma_{m_1+n_1-2r_1,m_2+n_2-2r_2}. \nonumber
\end{eqnarray}
Assuming $ \vert \sigma_k \vert  \leq C_\eta e^{-\gamma \|k\|_{\ell^{1}}}$ for every $k\in{\cal K}$ and employing the above estimate, we obtain 
\begin{eqnarray}
\vert a_{mn}^{(0)}\vert  &\lesssim&   e^{-\gamma (\vert m_1-n_1\vert + \vert m_2-n_2\vert)}
\Big\{ \sum_{r_1=0}^{\min(m_1-2,n_1-2)}  \sum_{r_2=0}^{\min(m_2-2,n_2-2)}
e^{-2\gamma (\min(m_1-2,n_1-2)-r_1)} e^{-2\gamma (\min(m_2-2,n_2-2)-r_2)}
\nonumber\\
&& + \ldots +  \sum_{r_1=0}^{\min(m_1,n_1)}  \sum_{r_2=0}^{\min(m_2,n_2)}
e^{-2\gamma (\min(m_1,n_1)-r_1)} e^{-2\gamma (\min(m_2,n_2)-r_2)}\Big\}\nonumber\\
&\lesssim& e^{-\gamma (\vert m_1-n_1\vert + \vert m_2-n_2\vert)}= C e^{-\gamma \|m-n\|_{\ell^{1}}}.\nonumber
\end{eqnarray}
This concludes the proof.
\end{proof}

\section*{Acknowledgements}
The authors would like to thank Michele Benzi for
helpful discussions and for pointing to \cite{Strohmer-et-al:2013}.
The first and  third author have been partially supported by the Italian research grant  {\sl Prin 2012} 2012HBLYE4\_004   ``Metodologie innovative nella modellistica differenziale numerica''.

\def\ocirc#1{\ifmmode\setbox0=\hbox{$#1$}\dimen0=\ht0 \advance\dimen0
  by1pt\rlap{\hbox to\wd0{\hss\raise\dimen0
  \hbox{\hskip.2em$\scriptscriptstyle\circ$}\hss}}#1\else {\accent"17 #1}\fi}

\end{document}